\newtheorem{prop}{Proposition}[section]
\newtheorem{theo}[prop]{Theorem}
\newtheorem{rem}[prop]{Remark}
\newtheorem{lem}[prop]{Lemma}
 \newcommand{\bx}{\mbox{\boldmath $x$}}
\newcommand{\bv}{\mbox{\boldmath $v$}}
\newcommand{\bu}{\mbox{\boldmath $u$}}
\newcommand{\bN}{\mbox{\boldmath $N$}}
\newcommand{\R}{\mbox{\boldmath $\mathbb{R}$}}
\newenvironment{proof}
{\begin{trivlist} \item[\hskip \labelsep {\bf Proof}\hspace*{3 mm}]}
	{\hfill$\Box$\end{trivlist}}
\newenvironment{acknow}
{\begin{trivlist} \item[\hskip \labelsep {\bf Acknowledgments.}]}
	{\end{trivlist}}
\date{}
\begin{document}

\title{Bifurcations of robust features on surfaces in the Minkowski 3-space}
\author{Marco Ant\^onio do Couto Fernandes}

\maketitle
\begin{abstract}
We obtain the bifurcation of some special curves on generic 1-parameter families of surfaces in the Minkowski 3-space.  
The curves treated here are the locus of points where the induced pseudo metric is degenerate, the  discriminant of the lines principal curvature, 
the parabolic curve and the locus of points where the mean curvature vanishes. 
\end{abstract}

\renewcommand{\thefootnote}{\fnsymbol{footnote}}
\footnote[0]{2020 Mathematics Subject classification:
	58K60 %Deformation of singularities
	53B30 %Local differential geometry of Lorentz metrics, indefinite metrics
	53A35 %Non-Euclidean differential geometry
	53A05 %Surfaces in Euclidean and related spaces
	58K05 %Critical points of functions and mappings on manifolds
}
\footnote[0]{Key Words and Phrases. Family of surfaces, Minkowski space, Umbilic points, Special curves on surfaces.}

%%%%%%%%%%%%%%%%%%%%%%%%%%%%%%%%%%%%%%%%
\section{Introduction}\label{sec:intro}

%The $(n+1)$-Minkowski space, denoted by $\R^{n+1}_1$, is a Lorentzian manifold obtained when we provide the affine space $\R^{n+1}$ with a certain pseudo-scalar product of index 1. This submanifold was presented by Hermann Minkowski as a geometric model for the Special Theory of Relativity \cite{Minkowski}. This model became known as Minkowski spacetime.

We consider generic 1-parameter families of surfaces in the Minkowski 3-space $\mathbb R^3_1$. The Lorentzian metric in $\mathbb R^3_1$ induces a pseudo metric on a surface in $ M\subset \mathbb R^3_1$. The locus of points on $M$ where the induced pseudo metric is degenerate is called the {\it Locus of Degeneracy} and is denoted by the $LD$ for short. 
The $LD$ separates the surface into regions where the induced metric is Riemannian or Lorentzian. 
On such regions the Gaussian and mean curvature of the surface are well defined. The zero sets of the Gaussian and mean curvature functions extends to the $LD$ and are called, respectively,  the {\it parabolic curve} and {\it mean curvature null curve} ($PC$ and $MCNC$ for short).

The equation of the lines of principal curvature also extends to the $LD$ (\cite{IzumiyaTari}). In the Riemannian region the discriminant of that equation consists of the umbilic points. In the Lorentzian region it is the locus of points where two principal directions coincide and become lightlike and is labelled {\it Lightlike Principal Locus}, $LPL$ for short.

We obtain in this paper the catalogue of all possible local bifurcations of the $LD$, the $LPL$, the $MCNC$ and the $PC$ when the surface is deformed in generic 1-parameter families of surfaces. 
We deal in \S \ref{sec:loretz} with the bifurcations at points  in the Lorentzian region and in \S \ref{sec:ld} at points on the $LD$. We give some preliminaries in \S \ref{sec:prel} and define the notion of genericity and codimension in \S \ref{sec:genericity&codim}. We observe that at points on the Riemannian region of the surface the situation is identical to that of surfaces in the Euclidean space. Families of such surfaces are studied in, for example,  \cite{BruceGiblinTari0,BruceGiblinTari,BruceGiblinTari2,BruceGiblinTari3,jorge,GarciaSoto,GarciaSotoGut}.
\section{Preliminaries} \label{sec:prel}

The {\it Minkowski space} $(\mathbb{R}_1^{3},\langle ,\rangle )$ is the vector
space $\mathbb{R}^{3}$ endowed with the metric induced by the pseudo-scalar product
$
\langle \bu, \bv\rangle =u_0v_0+u_1v_1-u_2v_2$, for any vectors $\bu=(u_0,u_1,u_2)$ and $\bv =(v_0,v_1, v_2)$ in
$\mathbb{R}^{3}$ (see for example \cite{Couto_Lymb, oNeill} for a treatment of the geometry of surfaces in $\mathbb R^3_1$).
A non-zero vector $\bu\in \mathbb R^3_1$ is said to be {\it spacelike} if
$\langle \bu, \bu\rangle>0$, {\it lightlike} if
$\langle \bu, \bu\rangle=0$ and {\it timelike} if $\langle \bu, \bu\rangle<0$.
The norm of a vector $\bu\in \mathbb{R}_1^{3}$ is defined by
$\Vert \bu \Vert=\sqrt{ |\langle \bu, \bu\rangle|}.$

Let $M$ be a smooth and regular surface in $\mathbb R^3_1$ and let 
$\bx:U\subset \mathbb R^2\to \mathbb{R}_1^{3}$ be a local parametrisation of $M$.
We shall simplify notation  and write $\bx(U)=M$. Let
$$E=\langle {\bx}_u,{\bx}_u \rangle,\quad
F=\langle{\bx}_u,{\bx}_v\rangle,\quad
G=\langle {\bx}_v,{\bx}_v\rangle$$
denote the coefficients of the first fundamental form of $M$ with respect to the parametrisation $\bx$, where subscripts denote partial derivatives. The induced (pseudo) metric on $M$ is Lorentzian (resp. Riemannian, degenerate) at ${\rm p}=\bx(u,v)$ if and only if $\delta(u,v) = (F^2-EG)(u,v)>0$ 
(resp. $<0$, $=0$). The locus of points on the surface where the metric is degenerate 
is called the {\it locus of degeneracy} and is denoted by $LD$.  
We identify the $LD$ on $M$ with its pre-image in $U$ by $\bx$. Then the $LD$ (in $U$) is given by
$$
LD=\{(u,v)\in U\, |\, \delta(u,v)=0\}.
$$

A direction  $(du,dv)\in T_{\rm p}M$ is lightlike if and only if
\begin{equation}\label{eq:LightBDE}
	Edu^2+2Fdudv+Gdv^2=0.
\end{equation}
Equation (\ref{eq:LightBDE}) has two  (resp. no) solutions when ${{\rm p}}$ is in the Lorentzian (resp. Riemannian) region of $M$.
At points on the $LD$, there is a unique (double) solution of the equation.

\begin{theo}{\rm (\cite{IzumiyaTari})}\label{the:param_lightlike}
There is a local parametrization $\bx : U \subset \R^2 \to \R^3_1$ of $M$ at $p \in LD$ such that the lightlike directions in $T_qM$ is given by $\mathbb{R}.x_u$ for every $q \in LD$, i.e., $E = F = 0$ on the $LD$.

If $q \in M$ belongs to the Lorentzian region, there is a local parametrization of $M$ at $p$ such that $\mathbb{R}.x_u$ and $\mathbb{R}.x_v$ are the lightlike directions in $T_qM$ for every $q \in \bx(U)$, i.e., $E = G = 0$ on $U$.
\end{theo}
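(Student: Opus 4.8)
The plan is to treat the two statements separately, in each case producing coordinates adapted to the lightlike directions by straightening out the relevant direction field(s). I would begin with the second (Lorentzian) case, which is the more classical one. Since $\delta = F^2 - EG > 0$ on a neighbourhood of $q$, the discriminant of (\ref{eq:LightBDE}) is positive, so $\sqrt{\delta}$ is smooth and the two roots of the binary differential equation depend smoothly on $(u,v)$. This gives two smooth line fields $\ell_1,\ell_2$ of lightlike directions, and they are transverse because the roots are distinct. Each $\ell_i$ is a $1$-dimensional distribution, hence trivially integrable, so I obtain two transverse foliations by integral curves. Choosing a first integral $v$ of $\ell_1$ and a first integral $u$ of $\ell_2$ (submersions constant along the respective leaves), the map $(u,v)$ has invertible differential precisely by transversality of $\ell_1,\ell_2$, so it is a local chart. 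In it $\bx_u$ spans $\ell_1$ and $\bx_v$ spans $\ell_2$, whence $E=\langle \bx_u,\bx_u\rangle = 0$ and $G=\langle \bx_v,\bx_v\rangle = 0$ on $U$ (with $F\neq 0$, consistent with $\delta = F^2>0$).

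For the first statement ($p\in LD$), the key observation is that on the $LD$ the unique lightlike direction coincides with the radical of the first fundamental form. Indeed, where $\delta = 0$ the matrix of the first fundamental form is degenerate with $1$-dimensional kernel, and a generator $r$ of this kernel satisfies $\langle r,\bx_u\rangle = \langle r,\bx_v\rangle = 0$, so in particular $\langle r,r\rangle = 0$; by uniqueness of the null direction on the $LD$, $r$ must span it. The point of this observation is that it reduces both equations to one: in \emph{any} parametrisation for which $\bx_u$ points along the lightlike direction at each point of the $LD$, one automatically gets $E=\langle \bx_u,\bx_u\rangle = 0$ and $F=\langle \bx_u,\bx_v\rangle = 0$ there, since $\bx_u = r$ is orthogonal to the whole tangent plane.

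It then remains only to construct such a parametrisation. The double root of (\ref{eq:LightBDE}) on the $LD$ is $du:dv = -F:E$, so $X_0 = -F\,\bx_u + E\,\bx_v$ is a smooth vector field spanning the lightlike line field along $\{\delta=0\}$ in the original chart (one checks $\langle X_0,X_0\rangle = -E\delta$, which vanishes on the $LD$). Taking a smooth non-vanishing extension $X$ of this line field to a neighbourhood of $p$ and applying the flow-box theorem, I obtain coordinates $(u,v)$ with $\bx_u = X$; by construction $\bx_u$ is the lightlike direction at each point of the $LD$, which by the previous paragraph gives $E=F=0$ there.

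The step I expect to be the most delicate is this last one, namely controlling the lightlike field near the $LD$. The two branches $(-F\pm\sqrt{\delta})/E$ meet with a square-root singularity on $\{\delta = 0\}$, so there is no smooth lightlike field on a full neighbourhood; the resolution is that the statement only requires $\bx_u$ to be lightlike \emph{on} the $LD$, where the direction is smooth, and off it $X$ may be any smooth extension. One also has to guarantee that the chosen generator is non-vanishing along the $LD$ so that straightening applies: $X_0$ vanishes only where $E=F=0$, and at such points one switches to the symmetric generator $G\,\bx_u - F\,\bx_v$; these fail simultaneously only if $E=F=G=0$, which is impossible for a regular surface since the induced form on $T_pM$ cannot have rank $0$ in signature $(2,1)$.
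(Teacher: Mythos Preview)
The paper does not prove this theorem: it is quoted from \cite{IzumiyaTari} and used as a black box throughout, so there is no proof in the paper to compare against.

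Your argument is nonetheless sound. The Lorentzian case is the standard simultaneous straightening of two transverse line fields via first integrals. For the $LD$ case, your key observation---that the unique null direction at a point of the $LD$ is precisely the radical of the degenerate first fundamental form, so that aligning $\bx_u$ with it forces both $E=0$ and $F=0$ there---is the right reduction, and the flow-box construction goes through once you have a smooth non-vanishing field near $p$ spanning that direction along the $LD$. Your treatment of the non-vanishing issue is also correct: at any point at least one of $E,F,G$ is nonzero (a $2$-plane in $\mathbb R^3_1$ cannot be totally null), so at least one of the two candidate fields $-F\bx_u+E\bx_v$ and $G\bx_u-F\bx_v$ is nonzero at $p$ and hence on a neighbourhood, and either one is lightlike along the $LD$. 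This is essentially the argument given in the cited reference.
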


At ${{\rm p}} \in M\setminus LD$, we have a well defined unit normal vector (the Gauss map) $\bN=\bx_u\times\bx_v/||\bx_u\times\bx_v||$,
which is timelike (resp. spacelike) if ${\rm p}$ is in the Riemannian (resp. Lorentzian) region of $M$.
(See \cite{pei} for a definition of an $\mathbb RP^2$-valued Gauss map.) The map $A_{{\rm p}}=-d\bN_{{\rm p}}:T_{{\rm p}}M\to T_{{\rm p}}M$ is a self-adjoint
operator on $M\setminus LD$. We denote by
$$l=\langle \bN,\bx_{uu}\rangle,\quad m=\langle \bN,\bx_{uv}\rangle, \quad n=\langle \bN,\bx_{vv}\rangle$$
the coefficients of the second fundamental form on $M\setminus LD$.

When the eigenvalues $\kappa_1$ and $\kappa_2$ of $A_p$ are real, they are called the {\it principal curvatures} and their associated eigenvectors the {\it principal directions}
of $M$ at ${{\rm p}}$. There are always two principal curvatures at each point on the Riemannian part of $M$ but this is not always
true on its Lorentzian part.
A point ${\rm p}$ on $M$ is called an {\it umbilic point} if  $\kappa_1=\kappa_2$ at ${\rm p}$ (i.e., if $A_p$ is a multiple of the identity map). It is called a {\it spacelike umbilic point} (resp. {\it timelike umbilic point}) if ${\rm p}$ is in the Riemannian (resp. Lorentzian) 
part of $M$.

The \textit{lines of principal curvature}, which are the integral curves of the principal directions, are the solutions of the binary differential equation (BDE)
\begin{equation}\label{eq:principalBDE}
	(Fn-Gm)dv^2+(En-Gl)dvdu+(Em-Fl)du^2=0.
\end{equation}
The discriminant of BDE (\ref{eq:principalBDE}) 
$$\{
(u,v)\in U \, | \left((En-Gl)^2-4(Fn-Gm)(Em-Fl)\right)(u,v)=0\}
$$
consists of the umbilic points in the Riemannian region of $M$ and is the locus of points in the Lorentzian region where two principal directions coincide and become lightlike. It is labelled {\it Lightlike Principal Locus} $(LPL)$ in \cite{Izu-Machan-Farid}. The principal directions are orthogonal when there are two of them at a given point; in particular one is spacelike and the other is timelike if the point is in the Lorentzian region.

The \textit{Gaussian curvature} and \textit{mean curvature} of $M$ at $p$ are $K = \det (A_p)$ and $H=-\frac{1}{2} tr (A_p)$, respectively. We have
$$K = \frac{ln-m^2}{EG-F^2}, \qquad H = \frac{lG-2mF+nE}{2 (EG-F^2)}.$$
The locus of points where the Gaussian and mean curvature are zero are called the \textit{parabolic curve} ($PC$) and \textit{mean curvature null curve} ($MCNC$), respectively.

One can extend the lines of principal curvature, the $LPL$, the $PC$ and the $MCNC$ across the $LD$ as follows (\cite{IzumiyaTari}).
As equation (\ref{eq:principalBDE}) is homogeneous in $l,m,n$,
we can multiply these coefficients by $||\bx_u\times\bx_v||$ and substitute them in the equation by
\[
\bar{l}=\langle \bx_u\times\bx_v,\bx_{uu}\rangle, \quad
\bar{m}=\langle \bx_u\times\bx_v,\bx_{uv}\rangle,\quad
\bar{n}=\langle \bx_u\times\bx_v,\bx_{vv}\rangle.
\]
The new equation 
\begin{equation}\label{eq:principalLD}
	(G\bar{m}-F\bar{n})dv^2+(G\bar{l}-E\bar{n})dudv+(F\bar l-E\bar{m})du^2=0
\end{equation}
is defined at points on the $LD$ and
its solutions are the same as those of equation (\ref{eq:principalBDE}) in the Riemannian and Lorentzian regions of $M$. A {\it lightlike umbilic point}  is defined as a point on the $LD$ where all the coefficients of equation  (\ref{eq:principalLD}) vanish (see \cite{CaratheodoryR31}). 
This occurs if and only if the $LD$ is singular (\cite{CaratheodoryR31}). The discriminant of the BDE $(\ref{eq:principalLD})$ extends the $LPL$ to points on the $LD$. Thus, a point $p = \bx(u,v)$ belongs to the $LPL$ of $M$ when
$$
\tilde \delta(u,v) = \left((E\bar n-G\bar l)^2-4(F\bar n-G\bar m)(E\bar m-F\bar l)\right)(u,v)=0.
$$

For the $PC$, note that $K = 0$ if and only if $\bar l \bar n-\bar{m}^2=0$. Thus, the $PC$  extends to  points on the $LD$ as  the zero set of the function $\bar K=\bar l \bar n-\bar{m}^2$. Similarly, the $MCNC$  extends to the $LD$ as the zero set of the function $\bar H = \bar l G-2 \bar m F+\bar n E$.

\medskip

We use here some concepts from singularity theory (see \cite{arnold,wall}). Two germs of functions $f_i:(\mathbb R^n,0)\to (\mathbb R^m,0), i=1,2$ are said to be \textit{$\mathcal R$-equivalent} if $f_2=f_1\circ h^{-1}$ for some germ of a diffeomorphism $h : (\R^n,0) \to (\R^n,0)$. Similarly, $f_1$ and $f_2$ are said to be $\mathcal{L}$-\textit{equivalent} when there is a germ of a diffeomorphism $k:(\R^m,0) \to (\R^m,0 )$ such that $f_2=k \circ f_1$. Finally, $f_1$ and $f_2$ are $\mathcal{A}$-\textit{equivalent} if there exist germs of diffeomorphisms $h$ and $k$ such that $f_2=k \circ f_1 \circ h^{- 1}$.

The  simple singularities of germs of functions ($m=1$) are classified by Arnold \cite{arnold}. Representatives of their $\mathcal R$-orbits when $n=2$ are as follows 
\[ 
A_k: \pm(x^2\pm y^{k+1}),k\ge 1,\, D_k:x^2y\pm y^{k-1},k\ge 4,\, E_6:x^3+y^4,\, E_7:x^3+xy^3,\, E_8:x^3+y^5.
\]
For $n\ge 3$ one adds the quadratic form $\pm x_3^2\pm \cdots\pm x_n^2$ to the above normal forms.

We denote by $j^k_qf$ the \textit{$k$-jet} of a germ of a smooth/analytic function $f:(\mathbb R^n,q)\to \mathbb R$, i.e., its Taylor polynomial of order $k$ at $q$, and by $J^k(n,1)$ the space of all $k$-jets of germs of functions. If $q$ is the origin, then we denote the $k$-jet by $j^kf$. When $n=2$, we write the $k$-jet, at the origin, of a function $f$ in the form $j^kf=\sum_{s=0}^k\sum_{i=0}^s a_{si}x^{s-i}y^i$
and identify it with the coefficients $(a_{si}) \in \R^{(k+1)(k+2)/2}$.

%If a curve $C$ is given implicitly by $F = 0$, with $F: \R^2 \to \R$, and has a singularity $A_1^-$ at the origin, then
%$$j^2 F = (b_1 x+b_2 y) (b_3 x+b_4 y),$$
%with $b_1 b_4 \neq b_3 b_2$. The lines $b_1 x+b_2 y = 0$ and $b_3 x+b_4 y = 0$ are the \textit{tangent lines} to the  curve $C$ at origin.
%%%%%%%%%%%%%%%%%%%%%%%%%%%%%%%%%%%%%%%%

\section{Genericity and codimension}\label{sec:genericity&codim}

In this section, we present some concepts used in the rest of the paper. Let $G(3,1)$ be the group generated by Lorentzian rotations, reflection, translations and homotheties with the composition operation. 
%Every $\sigma \in G(3,1)$ is a diffeomorphism from $\R^3_1$ to $\R^3_1$. So, if $M \subset \R^3_1$ is a smooth surface, then $\sigma (M) \subset \R^3_1$ is a smooth surface too. Furthermore, 
Any $\sigma \in G(3,1)$ preserves the $LD$, $LPL$, $PC$ and $MCNC$ on surfaces  in $\mathbb R^3_1$.

%A local parameterization $\bx: U \subset \R^2 \to \R^3_1$ of the surface $M$ is called a \textit{Monge parameterization} of $M$ when $\bx$ represents $M$ as the graph of a function. For every $p \in M$ there is a Monge parameterization $\bx: U \subset \R^2 \to \R^3_1$ of $M$ such that $p \in \bx(U)$. The next results present Monge parameterizations used in this papper.

%\begin{theo}\label{teo:param_monge_time}
Let $p \in M$ be a point in Lorentzian region. 
After composing by elements of $G (3,1)$ if necessary, 
we can parametrise $M$ locally in Monge-form 
$\bx: U \subset \R^2 \to \R^3_1$, with $\bx(x,z) = (x,f(x,z),z)$, 
so that $p = \bx(0,0)$ and
\begin{equation}\label{eq:param_monge_time}
	j^kf = \sum_{i=2}^k \sum_{j=0}^i a_{ij} x^{i-j}z^j.
\end{equation}
Then, $p$ is an umbilic point if and only if $a_{20} = - a_{22}$ and $a_{21} = 0$.

Similarly, if $p \in M$ belongs to $LD$, then after composing by elements of $G (3,1)$ if necessary, we can parametrise $M$ locally in Monge-form $\bx: U \subset \R^2 \to \R^3_1$ with  $\bx(x,y) = (x,y,f(x,y))$, so that $p = \bx(0,0)$ and
\begin{equation}\label{eq:param_monge_light}
	j^kf = x + \sum_{i=2}^k \sum_{j=0}^i a_{ij} x^{i-j}y^j.
\end{equation}
With the above setting, $p \in LPL$ if and only if $a_{20} = 0$, and $p$ is an umbilic point if and only if $a_{20} = a_{21} = 0$.

%
%\subsection{Monge-Taylor Map}\label{sec:MongeTaylor}

In \cite{bruce84}, Bruce described a technique for studying local properties of surfaces 
in the Euclidean space $\mathbb R^3$ called Monge-Taylor map. 
%Locally, at each point ${\rm p}$ on a compact orientable surface in $\mathbb R^3$, it is chosen an orthonormal frame $\{ {\bf u}, {\bf v}, {\bf n}\}$ with $ {\bf n}$ normal to $M$, so that $M$ is locally the graph of a function $z=f_{{\rm p}}(x,y)$, with $j^1f_{{\rm p}}=0$. One can then construct the Monge-Taylor map $\theta: M\to \mathcal V\subset J^k(2,1)$, given by ${{\rm p}}\mapsto j^kf_{{\rm p}}$, with $\mathcal V$ consisting of polynomials of $  2\le $ degree $\le k$. The idea is that the $k$-jet of $f_{{\rm p}}$ at the origin contains all the sought after geometric properties of $M$ at ${{\rm p}}$.
We follow the ideas in \cite{bruce84} but, as our study is local, 
we choose a fixed coordinate system  $\{ {\bf u}, {\bf v}, {\bf w}\}$ in a neighbourhood $W$ of a point ${\rm p}\in M$ with ${\bf w}$ transverse to $M$ at points in $W$. 
For each ${\rm p} \in W$, consider the coordinate system $\{ {\bf u}, {\bf v}, {\bf w}\}$ with origin at ${\rm p}$ and take a local parametrization of $ M$ in Monge form $(x,y)\mapsto \bx_{{\rm p}}(x,y)=(x,y,f_{{\rm p}}(x,y))$. 
We define the \textit{Monge-Taylor map} 
$\theta: W\to J^k(2,1)$ by $\theta({{\rm p}})=j^kf_{{\rm p}}$.

%\begin{figure}[h!]
%	\centering
%	\includegraphics[scale=1]{MongeTaylor.pdf}
%	\caption{Function $f_p$ associated with each point $p \in W$.}
%	\label{fig:mongetaylor}
%\end{figure}

%Let $U \subset \R^2$ be a neighbourhood of the origin and $\bx : U \to \R^3_1$ be a local parametrization of $M$ given by $\bx(x,y) = (x,y,f(x,y))$ with $\bx(U) = W$ and $ {\rm p}_0=\bx(0,0)$. For $ (x_0,y_0)\in U$ and writing ${\rm p}= \bx(x_0,y_0)$, we have $f_{\rm p}(x,y) = f(x+x_0,y+y_0)-f(x_0,y_0)$ so that
%\begin{equation}\label{eq:mudvar}
%\theta({\rm p}) = j^kf_{\rm p} = j_{(x_0,y_0)}^kf- f(x_0,y_0).
%\end{equation}
%Then $\tilde W = \theta (W)$ is a surface in $J^k(2,1)$ and it follows from (\ref{eq:mudvar}) that $\theta_{\bx} : U \to J^k(2,1)$ is a local parametrization of $\tilde W$, where $\theta_{\bx} (x_0,y_0) = j_{(x_0,y_0)}^kf-f(x_0,y_0)$. 

\subsection{Generic properties of surfaces}\label{sec:generic}

%We can translate our point of interest to the origin and give the set of germs of local parametrisations of surfaces in $\mathbb R^3_1$,  $\mathcal G= \{ \bx : (\R^2,0) \to (\R^3_1,0) \}$,  the Whitney topology. 

Let $\mathcal{G}$ be the set of germs $(M,p)$ where $M \subset \R^3_1$ is a smooth surface and $p \in M$. A germ $\bx : (\R^2,0) \to (\R^3_1,p)$ is a \textit{local parametrization} if $\bx$ is smooth, has a continuous inverse and $d\bx_{(0,0)} : \R^2 \to \R^3$ is one-to-one. A \textit{local parametrization of $M$} is a local parametrization whose image is contained in $M$. Consider
$$\mathcal{F} = \{ \bx : (\R^2,0) \to (\R^3_1,p) : p \in \R^3_1, \, \bx \textit{ is a local parameterization} \}$$
with the Whitney topology. Note that $\sim_\mathcal{R}$ is an equivalence relation on $\mathcal{F}$ and denote by $[\bx]$ the equivalence class of $\bx \in \mathcal{F}$ relative to $\sim_{\mathcal{R}}$. Therefore, $\mathcal{F}/ \sim_\mathcal{R}$ is a topological space with the quotient topology.

%\begin{prop}
%	\textcolor{blue}{The map $f: \mathcal{G} \to \mathcal{F} / \sim_\mathcal{R}$ that associates with each $(M,p) \in \mathcal{G}$ the equivalence class $[ \bx: (\R^2,0) \to (\R^3_1,p)]$ of a local parameterization of $M$ is well defined and is a bijection.}
%\end{prop}
%
%\begin{proof}
%	\textcolor{blue}{Given $(M,p) \in \mathcal{G}$, let $\bx_1, \bx_2: (\R^2,0) \to (\R^3_1,p)$ two local parameterizations of $M$ . Thus, $\bx_1^{-1} \circ \bx_2: (\R^2,0) \to (\R^2,0)$ is a germ of diffeomorphism. Since $\bx_1 = \bx_2 \circ \left(\bx_1^{-1} \circ \bx_2\right)^{-1}$, it follows that $\bx_1 \sim_\mathcal{R} \bx_2$ and $ [\bx_1] = [\bx_2]$. Therefore, $f$ is well defined.}
%
%	\textcolor{blue}{If $(M,p), (N,q) \in \mathcal{G}$ and $f\left((M,p)\right) = f\left((N,q)\right) = [ \bx]$, then
%		$$p = \bx(0,0) = q.$$
%		Also, the image of $\bx$ is contained in $M$ and $N$. Hence, there exists a neighborhood $V$ of the origin in $\R^2$ such that
%		$$\bx(V) \subset M \cap N.$$
%		Since $\bx^{-1}$ is continuous, $\bx(V)$ is open on $M$ and $N$, that is, there are open subsets $W_M$ and $W_N$ of $\R^ 3_1$ with $\bx(V) = M \cap W_M = N \cap W_N$. Taking $W = W_M \cap W_N$, we have that $M \cap W = N \cap W$ and $(M,p) = (N,q)$. Therefore, $f$ is one-to-one.}
%	
%	\textcolor{blue}{On the other hand, take $\bx \in \mathcal{F}$. As $\bx$ is a local parameterization, the image of $\bx$ is a surface germ $(M,p) \in \mathcal{G}$. Thus, $f\left((M,p)\right) = [\bx]$ and $f$ is surjective. Therefore, $f$ is a bijection.}
%\end{proof}

The map $f: \mathcal{G} \to \mathcal{F} / \sim_\mathcal{R}$ that associates with $(M,p) \in \mathcal{G}$ the equivalence class $[ \bx: (\R^2,0) \to (\R^3_1,p)]$ of a local parameterization of $M$ is well defined and is a bijection. Therefore, we provide $\mathcal{G}$ with the only topology that makes $f$ a homeomorphism. A local property $P$ is said to be \textit{generic} in $\mathcal{G}$ when the subset $\mathcal{P} \subset \mathcal{G}$ formed by the pairs $(M,p)$ such that the point $p$ of $M$ satisfies the property $P$ is open and dense in $\mathcal{G}$. Given a subset $\tilde \mathcal{G} \subset \mathcal{G}$ we consider the induced topology of $\mathcal{G}$ in $\tilde \mathcal{G}$ and define the concept of generic property in $\tilde \mathcal{G}$ as in $\mathcal{G}$. For example, if
$$\tilde \mathcal{G} = \{ (M,p) \in \mathcal{G} : p \textit{ is a timelike umbilic point}\},$$
then the property $P : ``LPL \textit{ has a singularity } A_1^- \textit{ at } p"$ is generic in $\tilde \mathcal{G}$ \cite{MarcoTari}.

A $m$-parameters family of surfaces $M_t$ deforming $(M,p) \in \mathcal{G}$ is a family of surfaces whose parameterizations $\bx_t : U \to \R^3_1$ of $M_t $ smoothly depend of $t = (t_1,\dots,t_m) \in (\R^m,0)$ and $M_0 = M$. Let $z=f(x,y)$ be a Monge parameterization of the surface $M$. Then any $m$-parameter family of surfaces $M_t$ with $M_0 = M$ can be given by
$$z=f(x,y)+h(x,y,t_1,\dots,t_m)$$
with $h$ differentiable and $h(x,y,0,\dots,0) = 0$ for all $(x,y)\in U$. We say that the family $M_t$ is generic when $h$ satisfies generic conditions (open and dense conditions).

\begin{rem}
	When the surface $M$ is given in Monge form $x=f(y,z)$ or $y=f(x,z)$, the definition of a generic family of surfaces is analogous.
\end{rem}

We denote by $LD_t$ the set of points on the surface $M_t$ where the metric is degenerate. Thus, $LD_0$ is the $LD$ of the surface $M$ and the family of curves $LD_t$, with $t \in (\R^m,0)$, is a deformation of the $LD$ of $M$. Similarly, the $LPL$, the $PC$ and the $MCNC$ of $M$ are deformed along the family of surfaces. We denote the $LPL$, the $PC$ and the $MCNC$ of $M_t$ by $LPL_t$, $PC_t$ and $MCNC_t$, respectively.

\subsection{Codimension of a property}\label{sec:codimension}

Let $M \subset \R^3_1$ be a surface. A point at $M$ with a property $(P)$ has \textit{codimension $n$} when the submanifold in $J^k(2,1)$, for $k$ sufficiently large, obtained by the conditions imposed by $(P)$ on the coefficients of the Monge parameterization has codimension $n+2$. Generically, a $n$-parameter family of surfaces has points with properties of codimension $\leq n$.
%\begin{theo}\label{teo:cod_famil}
%	\textcolor{blue}{Generically, a $n$-parameters family of surfaces has points with properties of codimension $\leq n$.}
%\end{theo}
%
%\begin{proof}
%	\textcolor{blue}{Let $(P)$ be a property of codimension $m$. Therefore, the submanifold $\mathcal{P}$ of $J^k(2,1)$ defined by $(P)$ has codimension $m+2$. Generically, the Monge-Taylor map is transversal to $\mathcal{P}$, so, given a generic $n$-parameter family of surfaces $M_t$, the image of $M_t$ by the Monge-Taylor map is a submanifold $ \mathcal{M}$ of $J^k(2,1)$ with dimension $n+2$ and transversal to $\mathcal{P}$. If $m > n$, then the codimension of $\mathcal{P}$ is greater than the dimension of $\mathcal{M}$ and, as these manifolds are transversal, it follows that $\mathcal{P}$ and $\mathcal{M}$ do not intersect, that is, $M_t$ has no points with the property $(P)$.}
%\end{proof}
We are interested in the bifurcations of the $LD$, the $LPL$, the $PC$ and the $MCNC$ in generic 1-parameter families of surfaces, that is, in properties on these curves with codimension 0 and 1.

Properties of codimension 0 with respect to these curves are stable and known. For example, regular points of the $LD$ have codimension 0. In fact, taking $M$ in Monge form $z=f(x,y)$, with
$j^2f=a_{00}+a_{10}x+a_{11}y+a_{20}x^2+a_{21}xy+a_{22}y^2$. Then $p = \bx(0,0)$ is a regular point of the $LD$ when
\begin{equation}\label{eq:ld_reg}
	a_{10}^2+a_{11}^2=1,\quad a_{11} a_{21} + 2 a_{10} a_{20} = 0,\quad	2 a_{11} a_{22} + a_{10} a_{21} \neq 0.
\end{equation}
As the submanifold of $J^2(2,1)$ defined by (\ref{eq:ld_reg}) has codimension 2, regular points of the $LD$ have codimension 0. Similarly, regular points of the $LPL$, $PC$ and $MCNC$ have codimension 0. Also, the Morse singularities of the $LPL$ have codimension 0 (\cite{IzumiyaTari,sotogut}). 

Given two curves $\gamma_1$ and $\gamma_2$ on $M$, we denote by $m(\gamma_{1},\gamma_{2}:p)$ the order of contact between then at $p$. %Let $\gamma_1$ be a curve in $\R^2$ defined implicitly by $F(x,y) = 0$ and $\gamma_2: (\R,0) \to \R^2$ a parameterized curve. The {\it order of contact} between $\gamma_1$ and $\gamma_2$ at $p = \gamma_2(0)$ is the smallest positive integer $k$ such that the $k$-jet of the germ $f: ( \R,0) \to (\R,0)$, given by $f = F \circ \gamma_2$, is non-zero. The point $\gamma_2(0)$ belongs to $\gamma_1$ if and only if the order of contact is greater than 0. When the order of contact is 1 or 2, we say that the curves $\gamma_{1}$ and $\gamma_{2}$ are transverse at $p$ or have an ordinary tangency at $p$, respectively.
At points of codimension 0, the $LPL$ has ordinary tangencies with the $LD$ and with the $PC$ \cite{IzumiyaTari}, the $LD$ is transversal to the $PC$ (see \S \ref{sec:int_ld_K}) and the $MCNC$ is transversal to all other curves (see \S \ref{sec:int_lpl_k} and \S \ref{sec:int_ld_lpl}).

We study here properties of codimension 1. We separete the cases when the point is considered in Lorentzian region (\S \ref{sec:loretz})  or on the $LD$ (\S \ref{sec:ld}). The case when the point is in the Riemannian region are already studied elsewhere. At points in the Lorentzian region, codimension 1 cases can occur as follows:
\begin{itemize}
	\item[(i)] Degenerate contact between regular relevant curves (\S \ref{sec:int_lpl_k});
	\item[(ii)] flat timelike umbilic points (\S \ref{sec:flat_umb});
	\item[(iii)] singularities of the relevant curves (\S \ref{sec:umb_A3}, \S \ref{sec:sing_pc_mcnc}).
\end{itemize}
At points on the $LD$, we get codimension 1 cases when
\begin{itemize}
	\item[(i)] Degenerate contact between regular relevant curves with $LD$ (\S \ref{sec:int_ld_lpl}, \ref{sec:int_ld_K});
	\item[(ii)] singularities of the $LD$ (\S \ref{sec:umb_light}).
\end{itemize}

\section{Bifurcations at points in the Lorentzian region}\label{sec:loretz}

\subsection{Bifurcations at $\textbf{LPL} \cap \textbf{MCNC} \cap \textbf{PC}$}\label{sec:int_lpl_k}

In this subsection, we will discuss the intersection between the $LPL$ and the $PC$ when the $LPL$ is a regular curve (the singular cases are considered in \S \ref{sec:flat_umb} and \S \ref {sec:umb_light}). The intersection between the $LPL$ and the $PC$ at points of the $LD$ are always singularities of the $LPL$ (see \S \ref{sec:umb_light}). In the Riemannian region, the $LPL$ consist of the umbilic points and is singular at such points. Therefore, the intersection between the $PC$ and regular points of the $LPL$ occurs only in the Lorentzian region.

\begin{theo}\label{prop:inter_tripla}
	Let $M \subset \R^3_1$ be a smooth surface and $p \in M \setminus LD$. Then $p$ belongs to the intersection of two of three curves: the $LPL$, the $PC$ and the $MCNC$ if and only if $p$ belongs to the intersection of the three of then. Furthemore, if $p$ is a regular point on the $LPL$, then $m(LPL,PC:p) = 2 m(LPL,MCNC:p)$.
\end{theo}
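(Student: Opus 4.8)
The plan is to express all three curves, near $p$, as zero sets of the two scalar invariants $K$ and $H$, which is legitimate because $W := EG - F^2$ is nonzero on $M \setminus LD$. The first step is to identify the $LPL$ with the zero set of $H^2 - K$. Writing out the discriminant of the principal BDE (\ref{eq:principalBDE}) and comparing it with $(nE - 2mF + lG)^2 - 4W(ln - m^2)$ gives the algebraic identity
\[
(En - Gl)^2 - 4(Fn - Gm)(Em - Fl) = 4W^2\,(H^2 - K),
\]
the right-hand side being exactly $(2WH)^2 - 4W(WK)$. Since $\tilde\delta$, $\bar K$ and $\bar H$ differ from this discriminant, from $K$ and from $H$ only by nonvanishing factors built from $W$ and $\|\bx_u \times \bx_v\|^2$ on $M \setminus LD$, the $LPL$, the $PC$ and the $MCNC$ coincide near $p$ with the zero sets of $H^2 - K$, of $K$ and of $H$, respectively.

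With this dictionary the first assertion is elementary: among the three scalar conditions $K = 0$, $H = 0$ and $H^2 - K = 0$, any two force the third. Indeed, $K = 0$ and $H = 0$ give $H^2 - K = 0$; $K = 0$ and $H^2 - K = 0$ give $H^2 = 0$, hence $H = 0$; and $H = 0$ and $H^2 - K = 0$ give $K = H^2 = 0$. Thus $p$ lies on two of the three curves if and only if it lies on all three.

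For the contact statement, assume $p$ is a regular point of the $LPL$ and pick a smooth regular parametrization $\gamma : (\R, 0) \to (M, p)$ of the $LPL$. Along the $LPL$ we have $H^2 - K \equiv 0$, so $K \circ \gamma = (H \circ \gamma)^2$ as germs at $0$. Since the order of contact of a regular curve with the zero set of a function is the order of vanishing of that function pulled back to the curve, $m(LPL, PC : p) = \mathrm{ord}_0(K \circ \gamma)$ and $m(LPL, MCNC : p) = \mathrm{ord}_0(H \circ \gamma)$. The relation $K \circ \gamma = (H \circ \gamma)^2$ forces $\mathrm{ord}_0(K \circ \gamma) = 2\,\mathrm{ord}_0(H \circ \gamma)$ (in both the finite and the identically-zero cases), giving $m(LPL, PC : p) = 2\,m(LPL, MCNC : p)$.

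The only genuine work lies in the first step: establishing the discriminant identity and checking that the factors relating $\tilde\delta$, $\bar K$, $\bar H$ to $H^2 - K$, $K$, $H$ are nonvanishing near $p \in M \setminus LD$, so that the zero sets and the orders of contact are really those of the three scalar functions. Everything else follows formally from the single relation $K = H^2$ on the $LPL$.
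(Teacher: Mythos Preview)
Your argument is correct. Both your proof and the paper's reach the same core observation---that along the $LPL$ the defining function of the $PC$ is the square of (a unit times) the defining function of the $MCNC$---but by different routes. The paper invokes Theorem~\ref{the:param_lightlike} to pass to a chart with $E=G=0$, where the three curves become $\bar l\,\bar n=0$, $\bar l\,\bar n-\bar m^2=0$ and $\bar m=0$; the relation $\bar K\circ\gamma=\bar m(\gamma)^2$, $\bar H\circ\gamma=-2F\,\bar m(\gamma)$ is then read off directly. You instead stay in an arbitrary chart and use the classical identity expressing the discriminant of the principal BDE as $4(EG-F^2)^2(H^2-K)$, so that the $LPL$ is the zero set of $H^2-K$ and the relation $K=H^2$ along it is immediate. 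Your route is chart-independent and makes clear that the phenomenon is purely algebraic in $K$ and $H$; the paper's route avoids verifying the discriminant identity and instead leans on the adapted chart, which it reuses later in \S\ref{sec:int_lpl_k} and \S\ref{sec:umb_A3}. Either way the contact computation is the same one-line consequence.
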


\begin{proof}
	Let $\bx : U \to \R^3_1$ be the local parametrization of $M$ given by Theorem \ref{the:param_lightlike} with $p = \bx(0,0)$. Since $E = G = 0$ on $U$, then the $LPL$, the $PC$ and the $MCNC$ are given by $\bar l \bar n = 0$, $\bar m^2-\bar l \bar n = 0$ and $\bar m = 0$, respectively, and this proves the first part of the statement.
	
	Consider a local parametrization $\gamma : (\R,0) \to (\R^2,0)$ of the $LPL$ at a regular point $p = \bx(0,0)$. We have $\bar K (\gamma(x)) = \bar m (\gamma(x))^2$ and $\bar H (\gamma(x)) = -2 F(\gamma(x)) \bar m (\gamma(x))$. Since $F$ does not vanish on $U$, the order of contact between the $LPL$ and the $PC$ is twice the order of contact between the $LPL$ and the $MCNC$.
\end{proof}

%It follows from Theorem \ref{prop:inter_tripla} that studying the intersections between the $LPL$ and the $PC$, outside the $LD$, is equivalent to studying the intersections between the $MCNC$ and these curves.

Therefore, in this subsection, the intersections between the $LPL$ and the $MCNC$ will also be considered.

\begin{theo}\label{teo:lpl_k_codim}
	Regular points $p$ of the $LPL$ where $m(LPL,PC:p) > 2$ are of codimension $\geq 1$.
\end{theo}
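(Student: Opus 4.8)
The plan is to reduce the statement, via Theorem \ref{prop:inter_tripla}, to a tangency condition between two regular curves, and then to count the codimension of that condition in $J^k(2,1)$ using the Monge form (\ref{eq:param_monge_time}). Since $p$ is a regular point of the $LPL$ and $p\in PC$ forces $p\in MCNC$ (Theorem \ref{prop:inter_tripla}), the relation $m(LPL,PC:p)=2\,m(LPL,MCNC:p)$ shows that $m(LPL,PC:p)>2$ is equivalent to $m(LPL,MCNC:p)\ge 2$. In other words, the property to analyse is exactly that $p$ lies on $LPL\cap MCNC\cap PC$ and that the two regular curves $LPL$ and $MCNC$ are tangent at $p$.

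First I would parametrise $M$ in the Lorentzian region by (\ref{eq:param_monge_time}) with $p=\bx(0,0)$ and compute $E,F,G$ and $\bar l,\bar m,\bar n$ together with their first derivatives. At the origin one gets $E=1$, $F=0$, $G=-1$, with all first derivatives of $E,F,G$ vanishing there, and $\bar l(0)=-2a_{20}$, $\bar m(0)=-a_{21}$, $\bar n(0)=-2a_{22}$. Hence at $p$ the three curves are cut out by
\[
\tilde\delta(0)=4\big((a_{20}+a_{22})^2-a_{21}^2\big),\qquad \bar H(0)=2(a_{20}-a_{22}),\qquad \bar K(0)=4a_{20}a_{22}-a_{21}^2,
\]
so on the triple intersection $a_{20}=a_{22}$ and $a_{21}=\pm 2a_{20}$, in agreement with Theorem \ref{prop:inter_tripla}; the regular points of the $LPL$ are those with $a_{20}=a_{22}\neq 0$, the value $a_{20}=a_{22}=0$ being the umbilic point where the $LPL$ is singular.

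Next comes the codimension count. The conditions $(a)$ $\tilde\delta(0)=0$ and $(b)$ $\bar H(0)=0$ define the locus $LPL\cap MCNC\cap PC$; their differentials in $(a_{20},a_{21},a_{22})$ are independent wherever $a_{20}=a_{22}\neq 0$, so this locus is a submanifold of codimension $2$, i.e.\ codimension $0$ (the ordinary triple point). The tangency of $LPL$ and $MCNC$ is the further condition $(c)$ $d\tilde\delta(0)\wedge d\bar H(0)=0$ (the two gradients are parallel). Because $dE=dF=dG=0$ at the origin, the gradients $d\tilde\delta(0)$ and $d\bar H(0)$ reduce to combinations of $\bar l_x,\bar l_z,\bar m_x,\bar m_z,\bar n_x,\bar n_z$ at $0$, and these involve the third order coefficients $a_{3j}$ nontrivially (for instance $\bar l_x(0)=-6a_{30}+(\mbox{terms in }a_{20},a_{21},a_{22})$). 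Since $(a)$ and $(b)$ depend only on the $2$-jet, letting the $a_{3j}$ vary shows that $(c)$ is functionally independent of $(a)$ and $(b)$; hence the three conditions cut out a submanifold of codimension $3$ in $J^k(2,1)$, that is, codimension $1$. As the inequality $m(LPL,PC:p)>2$ also permits the deeper tangencies $m=6,8,\dots$, which impose additional conditions, the property has codimension $\ge 1$.

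The hard part will be step $(c)$: writing down the tangency equation explicitly, which requires the $1$-jets of $\bar l,\bar m,\bar n$ and the formation of $d\tilde\delta(0)\wedge d\bar H(0)$, and then confirming that this equation genuinely depends on the $3$-jet everywhere on the stratum $\{a_{20}=a_{22}\neq0,\ a_{21}=\pm 2a_{20}\}$, rather than degenerating on some sublocus. I would also check that regular $LPL$ points (those with $d\tilde\delta(0)\neq0$) do occur together with the tangency, so that the open hypothesis ``$p$ is a regular point of the $LPL$'' is compatible with the codimension-$1$ stratum just produced.
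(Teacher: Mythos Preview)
Your approach is essentially the paper's: both reduce via Theorem~\ref{prop:inter_tripla} to $m(LPL,MCNC:p)\ge 2$, impose the two incidence conditions $p\in LPL\cap MCNC$, add the tangency condition between these two regular curves, and count three independent equations in $J^3(2,1)$. The one real difference is that the paper works with the \emph{general} Monge form $j^3f=a_{00}+a_{10}x+a_{11}z+\cdots$ (this is what the Monge--Taylor map with fixed frame produces, and hence what the codimension definition in \S\ref{sec:codimension} literally refers to), writes the incidence equations (\ref{eq:lpl_mcnc}) and the explicit transversality polynomial $\Lambda_1$ in (\ref{eq:lambda_1}), and then asserts that these three equations cut out a codimension-$3$ submanifold of the full $J^3(2,1)$. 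You instead work in the $G(3,1)$-normalised form (\ref{eq:param_monge_time}) with $a_{10}=a_{11}=0$, which makes the formulas much cleaner (your values of $\tilde\delta(0),\bar H(0),\bar K(0)$ and the description of the stratum are correct) and lets you argue independence of condition~(c) by a clean jet-order argument rather than by exhibiting $\Lambda_1$. This is legitimate because the property is $G(3,1)$-invariant, so the codimension may equally well be read off on such a slice; but since the paper's definition is phrased in the full jet space, you should say this explicitly. One small slip: in the normalised form $\bar l=-f_{xx}$ exactly, so $\bar l_x(0)=-6a_{30}$ with no additional terms in $a_{20},a_{21},a_{22}$.
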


\begin{proof}
	Take a local parameterization $\bx: U \to \R^3_1$ of $M$, with $\bx(x,z)=(x,f(x,z),z)$, $p = \bx(0,0)$ and
	$$j^3f = a_{00}+ a_{10} x+a_{11}z+a_{20} x^2+a_{21} x z+a_{22} z^2+a_{30} x^3+a_{31} x^2 z+a_{32} x z^2+a_{33} z^3.$$
	The point $p$ belongs to the $LPL \cap MCNC$ if and only if
	\begin{equation}\label{eq:lpl_mcnc}
		\left\{\begin{array}{lcc}
		a_{11}^2 a_{20}-a_{11} a_{10} a_{21}+a_{22} a_{10}^2+a_{22}-a_{20} & = & 0, \vspace{0.3cm}\\
		4 a_{11}^2 a_{20}^2-4 a_{11} a_{10} a_{21} a_{20}+a_{10}^2 a_{21}^2+a_{21}^2-4 a_{20}^2 & = & 0.
		\end{array}\right.
	\end{equation}
	These curves are transversal at $p$ when
	\begin{equation}\label{eq:lambda_1}
		\begin{array}{cclcc}
		\Lambda_1 & = & \left(a_{20}-a_{11}^2 a_{20}+a_{22} a_{10}^2+a_{22}\right) (a_{32} a_{31}-9 a_{33} a_{30})&& \vspace{0.2cm}\\
		&&+\left(a_{11}^2 a_{21}-2 a_{11} a_{22} a_{10}-a_{21}\right) \left(a_{31}^2-3 a_{32} a_{30}\right)&& \vspace{0.2cm}\\
		&&-\left(a_{10}^2 a_{21}-2 a_{11} a_{10} a_{20}+a_{21}\right) \left(a_{32}^2-3 a_{33} a_{31}\right)  & \neq & 0.
		\end{array}
	\end{equation}

	It follows from the Theorem \ref{prop:inter_tripla} that $m(LPL,PC:p)>2$ if $m(LPL,MCNC:p)>1$, that is, when $ \Lambda_1 = 0$. The result follows from the fact that the system of equations (\ref{eq:lpl_mcnc}), (\ref{eq:lambda_1}) define a submanifold on $J^3(2,1)$ of codimension 3.
\end{proof}

%\begin{rem}\label{remark:cod_0}
%	The transversal intersections between $LPL$ and the $MCNC$ have codimension 0 because such points are given by the submanifold $S$ of $J^3(2,1)$ defined by
%	$$\left\{\begin{array}{lcc}
%	a_{11}^2 a_{20}-a_{11} a_{10} a_{21}+a_{22} a_{10}^2+a_{22}-a_{20} & = & 0, \vspace{0.3cm}\\
%	4 a_{11}^2 a_{20}^2-4 a_{11} a_{10} a_{21} a_{20}+a_{10}^2 a_{21}^2+a_{21}^2-4 a_{20}^2 & = & 0, \vspace{0.2cm}\\
%	\Lambda_1 & \neq & 0.
%	\end{array}\right.$$
%\end{rem}

Let 
$$\mathcal{G}_1=\{(M,p) \in \mathcal{G}: m(LPL,PC:p) > 2\}.$$
Given $(M,p) \in \mathcal{G}_1$, take $\bx : U \subset \R ^2 \to \R^3_1$ the local parameterization of $M$ given by Theorem \ref{the:param_lightlike}, with $p = \bx(0,0)$. With this parameterization, we have $\tilde \delta = 4 F^2 \bar l \bar n$, $\bar K = \bar m^2-\bar l \bar n$ and $\bar H = -2 F \bar m$. We write
$$\begin{array}{ll}
j^kF = \sum_{s=0}^{k} \sum_{i=0}^{s} F_{si} u^{s-i} v^i, & j^k \bar l = \sum_{s=0}^{k} \sum_{i=0}^{s} l_{si} u^{s-i} v^i, \vspace{0.2cm}\\
j^k \bar m = \sum_{s=0}^{k} \sum_{i=0}^{s} m_{si} u^{s-i} v^i, & j^k \bar n = \sum_{s=0}^{k} \sum_{i=0}^{s} n_{si} u^{s-i} v^i.
\end{array}$$
We have $F_{00} \neq 0$ and $m_{00} = 0$ because $p \in MCNC$. On the other hand, $l_{00} = 0$ or $n_{00} = 0$ for $p \in LPL$, wit as $p$ is a regular point on the $LPL$. We assume without loss of generality that $l_{00} = 0$. Then $\tilde \delta_u(0,0) = 4 F_{00}^2 l_{10} n_{00}$ and $\tilde \delta_v(0,0) = 4 F_{00}^2 l_{11} n_{00}$, consequently $l_{10} \neq 0$ or $l_{11} \neq 0$. Suppose that $l_{10} \neq 0$, the other case is similar. We have $m_{11} = \frac{l_{11} m_{10}}{l_{10}}$ because the $LPL$ and the $MCNC$ are not transverse at $p$.

It follows from the Implicit Function Theorem that the $LPL$ (in $U$) given by $\tilde \delta(u,v) = 0$ is parameterized by $\gamma(v) = (g(v),v)$ in a neighborhood $V$ of the origin, for some smooth function $ g$. We get
$$j^4g=-\frac{l_{11}}{l_{10}} x-\frac{l_{11}^2 l_{20} - l_{10} l_{11} l_{21} + l_{10}^2 l_{22}}{l_{10}^3} x^2+\frac{c_3}{l_{10}^5} x^3+ \frac{c_4}{l_{10}^7} x^4,$$
with 
$$\begin{array}{ccl}
c_3 & = & -l_{10}^4 l_{33}+l_{10}^3 l_{11} l_{32}+l_{10}^3 l_{21} l_{22}-l_{10}^2 l_{11}^2 l_{31}-2 l_{10}^2 l_{11} l_{20} l_{22}-l_{10}^2 l_{11} l_{21}^2+l_{10} l_{11}^3 l_{30} \vspace{0.2cm}\\
&& +3 l_{10} l_{11}^2 l_{20} l_{21}-2 l_{11}^3 l_{20}^2, \vspace{0.3cm}\\
c_4 & = & -l_{10}^6 l_{44}+l_{10}^5 l_{11} l_{43}+l_{10}^5 l_{21} l_{33}+l_{10}^5 l_{22} l_{32}-l_{10}^4 l_{11}^2 l_{42}-2 l_{10}^4 l_{11} l_{20} l_{33}-2 l_{10}^4 l_{11} l_{21} l_{32}\vspace{0.2cm}\\
&& -2 l_{10}^4 l_{11} l_{22} l_{31}-l_{10}^4 l_{20} l_{22}^2-l_{10}^4 l_{21}^2 l_{22}+l_{10}^3 l_{11}^3 l_{41}+3 l_{10}^3 l_{11}^2 l_{20} l_{32}+3 l_{10}^3 l_{11}^2 l_{21} l_{31}\vspace{0.2cm}\\
&& +3 l_{10}^3 l_{11}^2 l_{22} l_{30}+6 l_{10}^3 l_{11} l_{20} l_{21} l_{22}+l_{10}^3 l_{11} l_{21}^3-l_{10}^2 l_{11}^4 l_{40}-4 l_{10}^2 l_{11}^3 l_{20} l_{31}-4 l_{10}^2 l_{11}^3 l_{21} l_{30}\vspace{0.2cm}\\
&& -6 l_{10}^2 l_{11}^2 l_{20}^2 l_{22}-6 l_{10}^2 l_{11}^2 l_{20} l_{21}^2+5 l_{10} l_{11}^4 l_{20} l_{30}+10 l_{10} l_{11}^3 l_{20}^2 l_{21}-5 l_{11}^4 l_{20}^3.\vspace{0.2cm}
\end{array}$$

%\begin{theo}\label{teo:lpl_cmn_2}
%	Let $(M,p) \in \mathcal{G}_1$ be generic. The order of contact between $LPL$ and $MCNC$ at $p$ is $2$.
%\end{theo}
%
%\begin{proof}
%	Take $h = \bar H \circ \gamma$. Note that $j^2 h = -\frac{2 F_{00} \Lambda_2}{l_{10}^3} v^2$, with
%	$$\Lambda_2 = l_{10}^3 m_{22}-l_{10}^2 l_{11} m_{21}-l_{10}^2 l_{22} m_{10}+l_{10} l_{11}^2 m_{20}+l_{10} l_{11} l_{21} m_{10}-l_{11}^2 l_{20} m_{10}.$$
%	Thus, the order of contact between $LPL$ and $MCNC$ at $p$ is $2$ if and only if $\Lambda_2 \neq 0$.
%\end{proof}
%
%\begin{cor}\label{cor:lpl_K_4}
%	Let $(M,p) \in \mathcal{G}_1$ be generic. The order of contact between $LPL$ and $PC$ at $p$ is $4$.
%\end{cor}

\begin{theo}\label{teo:lpl_k_mcn}
	Let $(M,p) \in \mathcal{G}_1$ is generic. Then 
	\begin{description}
		\item[(1)] $m(LPL,PC:p) = 4$ and $m(LPL,MCNC:p) = 2$. There are two possible configurations for the three curves as shows in Figure \ref{fig:lpl_k_cmn_def} middle figures.
		\item[(2)] For a generic 1-parameter family of surfaces $M_t$, with $M_0=M$, the configurations of the three curves deform as in Figure \ref{fig:lpl_k_cmn_def}. 
	\end{description}
	\begin{figure}[h!]
		\centering
		\includegraphics[width=0.9\textwidth]{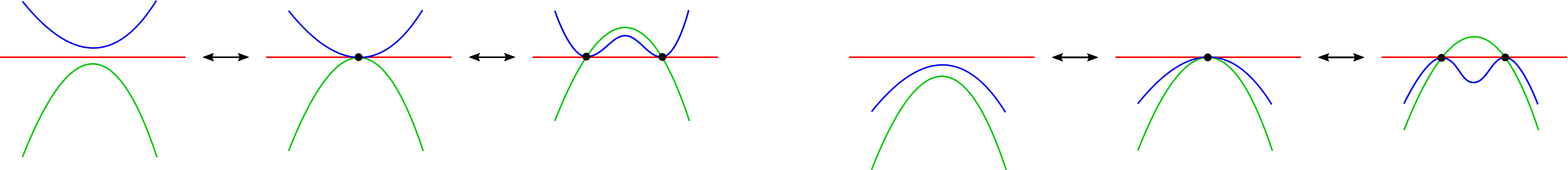}
		\caption{Deformations of the $LPL$ (red), the $PC$ (blue) and the $MCNC$ (green) in a generic 1-parameter family of surfaces at points $p$ where $m(LPL,MCNC) = 2$.}
		\label{fig:lpl_k_cmn_def}
	\end{figure}
\end{theo}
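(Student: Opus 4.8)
The plan is to work throughout in the parametrization supplied by Theorem \ref{the:param_lightlike}, for which $E=G=0$ and $F$ never vanishes, so that the three curves take the simple forms $\tilde\delta = 4F^2\bar l\bar n$ (the $LPL$), $\bar K = \bar m^2-\bar l\bar n$ (the $PC$) and $\bar H = -2F\bar m$ (the $MCNC$), together with the normalisations already fixed before the statement: $l_{00}=m_{00}=0$, $l_{10}\neq 0$ and $n_{00}\neq 0$ (so that the $LPL$ is locally the regular branch $\{\bar l=0\}$), and $m_{11}=l_{11}m_{10}/l_{10}$. For part (1) I would restrict $\bar H$ to the parametrization $\gamma(v)=(g(v),v)$ of the $LPL$. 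Since $F$ does not vanish, $m(LPL,MCNC:p)$ equals the order of vanishing at $v=0$ of $\bar m(\gamma(v))$. Using $g(v)=-\frac{l_{11}}{l_{10}}v+\cdots$, the coefficient of $v$ in $\bar m(\gamma(v))$ is $m_{11}-l_{11}m_{10}/l_{10}$, which vanishes precisely by the defining relation of $\mathcal{G}_1$; I then read off the coefficient $Q$ of $v^2$, a polynomial in the $l_{si},m_{si}$. The generic stratum of $\mathcal{G}_1$ is $\{Q\neq 0\}$, on which $m(LPL,MCNC:p)=2$ and hence, by Theorem \ref{prop:inter_tripla}, $m(LPL,PC:p)=4$; genericity follows by checking that $\{Q=0\}$ raises the codimension in $J^k(2,1)$ by one.

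For the two configurations I would change coordinates to $(\tilde u,\tilde v)=(\bar l,v)$, a diffeomorphism since $l_{10}\neq 0$, so that $LPL=\{\tilde u=0\}$. In these coordinates $\bar m=b_{10}\tilde u+Q\tilde v^2+\cdots$ with $b_{10}=m_{10}/l_{10}\neq 0$ generically (otherwise the $MCNC$ is singular at $p$, a higher-codimension condition), while $\bar n(0,0)=n_{00}\neq 0$. Solving $\bar m=0$ gives $MCNC:\ \tilde u=-\frac{Q}{b_{10}}\tilde v^2+\cdots$, and solving $\bar l\bar n=\bar m^2$ gives $PC:\ \tilde u=\frac{Q^2}{n_{00}}\tilde v^4+\cdots$. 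Thus the $PC$ always lies on the fixed side $\mathrm{sign}(n_{00})$ of the $LPL$, whereas the $MCNC$ lies on the side $\mathrm{sign}(-Q/b_{10})$; the two admissible configurations are exactly the cases in which the $MCNC$ and the $PC$ lie on the same side or on opposite sides of the $LPL$, distinguished by the sign of the invariant $Q/(b_{10}n_{00})$. This accounts for the two middle pictures in Figure \ref{fig:lpl_k_cmn_def}.

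For part (2) I would straighten the family $LPL_t$ to $\{\tilde u=0\}$ by coordinates depending smoothly on $t$, and set $\phi_t(\tilde v)=\bar m_t(0,\tilde v)$, whose zeros are $LPL_t\cap MCNC_t=LPL_t\cap PC_t$ by Theorem \ref{prop:inter_tripla}. At $t=0$ one has $\phi_0=Q\tilde v^2+\cdots$, an $A_1$ zero with vanishing value, i.e. a double root. The genericity of the family is designed so that $t\mapsto\phi_t$ is a versal unfolding of this $A_1$ point; applying the splitting lemma with parameter reduces $\phi_t$ to $Q\tilde v^2+C(t)$ with $C(0)=0$ and $C'(0)\neq 0$. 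Hence, as $t$ crosses $0$, the double root either splits into two simple roots or becomes non-real. Translating back, $LPL_t$ and $MCNC_t$ meet transversally at two points, each with $m(LPL_t,PC_t)=2$ by Theorem \ref{prop:inter_tripla}, on one side of $t=0$, degenerate to the single middle picture at $t=0$, and separate on the other side; carrying this out inside each of the two configuration classes reproduces the two rows of Figure \ref{fig:lpl_k_cmn_def}.

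The main obstacle is not the local normal forms but the two genericity verifications. One must show both that $\{Q\neq 0\}$ is open and dense in $\mathcal{G}_1$ and that $C'(0)\neq 0$ for an open and dense set of $1$-parameter families; the latter requires expressing the unfolding coefficient $C(t)$ in terms of the deformation $h(x,y,t)$ and proving the corresponding transversality to the codimension-one stratum in the jet space. This bookkeeping, together with careful sign tracking to confirm that exactly two configurations (and no more) persist under the deformation, is where the real work lies.
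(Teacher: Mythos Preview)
Your proposal is correct and follows essentially the same route as the paper. Both arguments work in the $E=G=0$ parametrization of Theorem~\ref{the:param_lightlike}, restrict $\bar m$ (equivalently $\bar H$) to the parametrized $LPL$ to extract the generic condition (your $Q$ is the paper's $\Lambda_2/l_{10}^3$) and read off the two configurations from a sign, and for part~(2) both reduce the bifurcation of $LPL_t\cap MCNC_t$ to a fold: the paper does this via the $\mathcal A$-equivalence $(\bar l_t,\bar m_t)\sim_{\mathcal A}(u,v^2+ct)$, while you restrict $\bar m_t$ to the straightened $LPL_t$ and apply a one-variable Morse lemma with parameter, which is the same computation in slightly different packaging.
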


\begin{proof}
	\textbf{(1)} We observe that 
	$$\left(\bar K \circ \gamma\right) (v) = \left(\frac{\Lambda_2}{l_{10}^3} v^2\right)^2+O(5) \quad {\rm and} \quad \left(\bar H \circ \gamma\right) (v) = -2 F_{00} \frac{\Lambda_2}{l_{10}^3} v^2+O(3),$$	
	where $O(k)$ is a remamber of order $k$, and
	$$\Lambda_2 = l_{10}^3 m_{22}-l_{10}^2 l_{11} m_{21}-l_{10}^2 l_{22} m_{10}+l_{10} l_{11}^2 m_{20}+l_{10} l_{11} l_{21} m_{10}-l_{11}^2 l_{20} m_{10}.$$
	We have two cases depending on the sign of $F_{00} l_{10} \Lambda_2$. Figure \ref{fig:lpl_k_cmn_def} first when $F_{00} l_{10} \Lambda_2<0$ and last when $F_{00} l_{10} \Lambda_2>0$.

	\noindent \textbf{(2)} We take $\bx_t : U \to \R^3_1$ a parameterization of $M_t$ given by Theorem \ref{the:param_lightlike} which depends smoothly on $t \in (\R,0)$ and $\bx_0 = \bx$. Denote by $E(u,v,t)$, $F(u,v,t)$, $G(u,v,t)$, $\bar l(u,v,t)$, $\bar m(u,v,t)$ and $\bar n(u,v,t)$ the coefficients of the first and second fundamental forms of $M_t$ in $(u,v) \in U$ with respect to $ \bx_t$. Thus, $E(u,v,t) = G(u,v,t) = 0$ on $U$.
	
	The point $\bx_t(u,v)$ is on the $LPL_t$ if and only if $l(u,v,t)=0$. Likewise, $\bx_t(u,v) \in MCNC_t$ when $m(u,v,t) = 0$. Let $\sigma_t(u,v) = \left( l(u,v,t), m(u,v,t) \right)$. If $l(u,v,t) = \bar l(u,v) + t l_2(u,v,t)$ and $m(u,v,t) = \bar m(u,v) +t m_2(u,v,t)$, then
	$$\sigma_t \sim_\mathcal{A} \left( u,v^2+\frac{l_{10} m_2(0,0,0)-m_{10} l_2(0,0,0))}{l_{10}} t+O(2) \right),$$
	where $O(2)$ is a remander of order 2 in $t$. Therefore, when $l_{10} m_2(0,0,0)\neq m_{10} l_2(0,0,0)$,
	the $LPL_t$ and the $MCNC_t$ intersect at 2 or 0 points depending on the sign of $t$. As the $PC$ intersects these curves at $LPL_t \cap MCNC_t$, it follows that the deformation of these curves is as in Figure \ref{fig:lpl_k_cmn_def}.
\end{proof}

\subsection{Bifurcations at a flat timelike umbilic point}\label{sec:flat_umb}

A flat umbilic point is an umbilic point where Gaussian curvature is zero. Thus, flat umbilic points are where the $PC$ passes through a singularity of the $LPL$. Flat spacelike umbilic points are similar to flat umbilic points on surfaces in Euclidean space and such points have already been studied in \cite{BruceGiblinTari}.

%If $p$ is a flat timelike umbilic point, then $T_pM$ is timelike and $(1,0,0) \notin T_pM$ or $(0,1,0) \notin T_pM$. Suppose, without loss of generality, that $(0,1,0) \notin T_pM$. Therefore, throughout this subsection, we consider the Monge-Taylor map with $\vec w=(0,1,0)$.% Flat umbilic points are not generic. In fact, the next theorem shows that such points have codimension 1.

%\begin{theo}\label{teo:cod_1_umb_flat}
%	Flat timelike umbilic points have codimension 1.
%\end{theo}
%
%\begin{proof}
%	\textcolor{blue}{Similar to the proof of Theorem \ref{teo:lpl_k_codim}.}
%	Let $M \subset \R^3_1$ be a surface, $p \in M$ a point at Lorentzian region and $\bx : U \to \R^3_1$ a local parameterization of $M$ given by $\bx (x,z) = (x,f(x,z),z)$, with $p = \bx(0,0)$ and
%	$$j^2f = a_{00}+a_{10}x+a_{11}z+a_{20} x^2+a_{21} xz+a_{22} z^2.$$
%	Thus, $p$ is a flat timelike umbilic point if and only if
%	\begin{equation}\label{eq:condicaoK}
%	\left\{\begin{array}{lcc}
%	(a_{11}^2-1)a_{20}-(a_{10}^2+1)a_{22} & = & 0, \\
%	(a_{10}^2+1)a_{21} - 2 a_{10} a_{11} a_{20} & = & 0, \\
%	a_{21}^2-a_{22} a_{20} & = & 0.
%	\end{array} \right.
%	\end{equation}
%	
%	\noindent If $S$ is the submanifold of $J^2(2,1)$ defined by (\ref{eq:condicaoK}), then $S$ has codimension 3 and flat timelike umbilic points have codimension 1.
%\end{proof}

Let 
$$\mathcal{G}_2 = \{(M,p) \in \mathcal{G} : p {\rm \,\, is \,\, a \,\, flat \,\, timelike \,\, umbilic \,\, point}\}.$$
In this subsection, unless otherwise noted, we use the local parameterization $\bx: U \to \R^3_1$ of $M$ given as in (\ref{eq:param_monge_time}) with $p = \bx(0,0)$. Since $p$ is a flat timelike umbilic point, it follows that $j^2f = 0$.

\begin{theo}\label{teo:umb_time_K}
	Let $(M,p) \in \mathcal{G}_2$ is generic. Then
	\begin{description}
		\item[(1)] The branches of the $LPL$, the $PC$ and the $MCNC$ curves are parwise transverse when they are real. There are three generic configurations as shown in Figure \ref{fig:desd_umb_time_K} middle figures.
		\item[(2)] For a generic 1-parameter family of surfaces $M_t$, with $M_0 = M$, the configurations of the three curves deform as in Figure \ref{fig:desd_umb_time_K}.
	\end{description}
			\begin{figure}[h!]
		\centering
		\begin{subfigure}[b]{0.5\textwidth}
			\includegraphics[width=\textwidth]{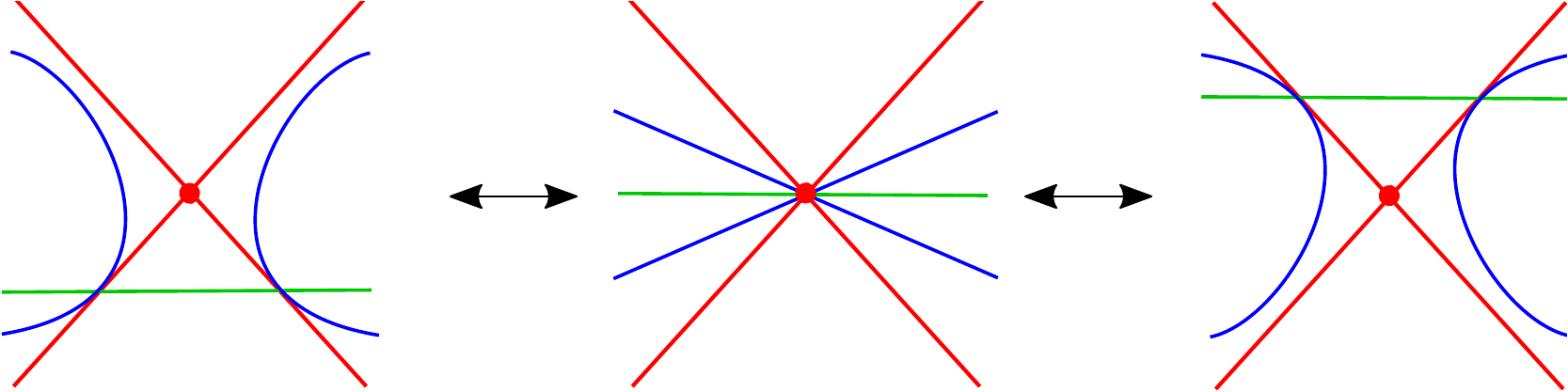}
			%\caption{Parabolic Curve with $A_1^+$ Singularity.}
		\end{subfigure}
		\quad\quad
		\begin{subfigure}[b]{0.5\textwidth}
			\includegraphics[width=\textwidth]{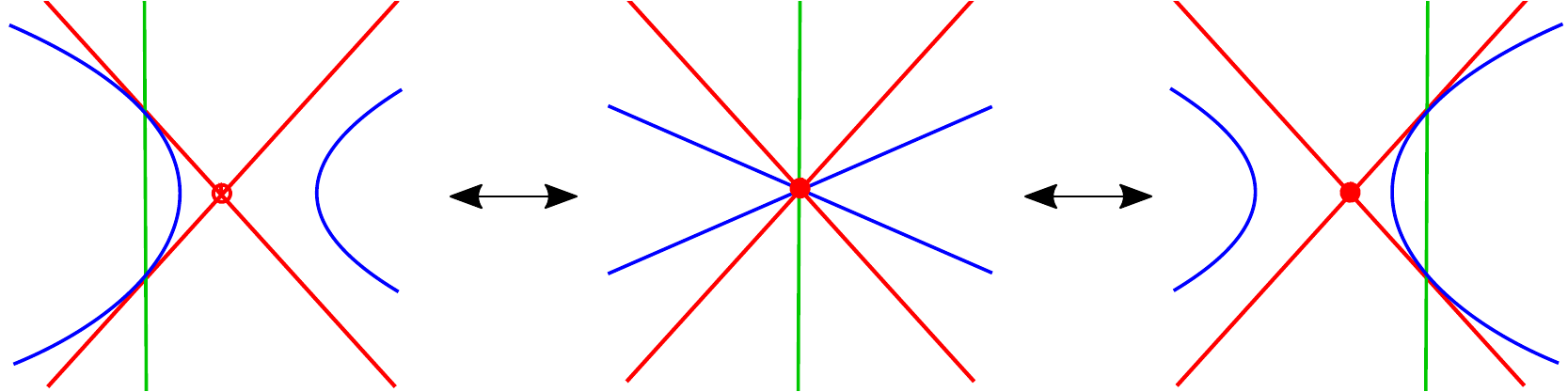}
			%\caption{Parabolic Curve with $A_1^-$ Singularity (Case 1).}
		\end{subfigure}
		\quad
		\begin{subfigure}[b]{0.5\textwidth}
			\includegraphics[width=\textwidth]{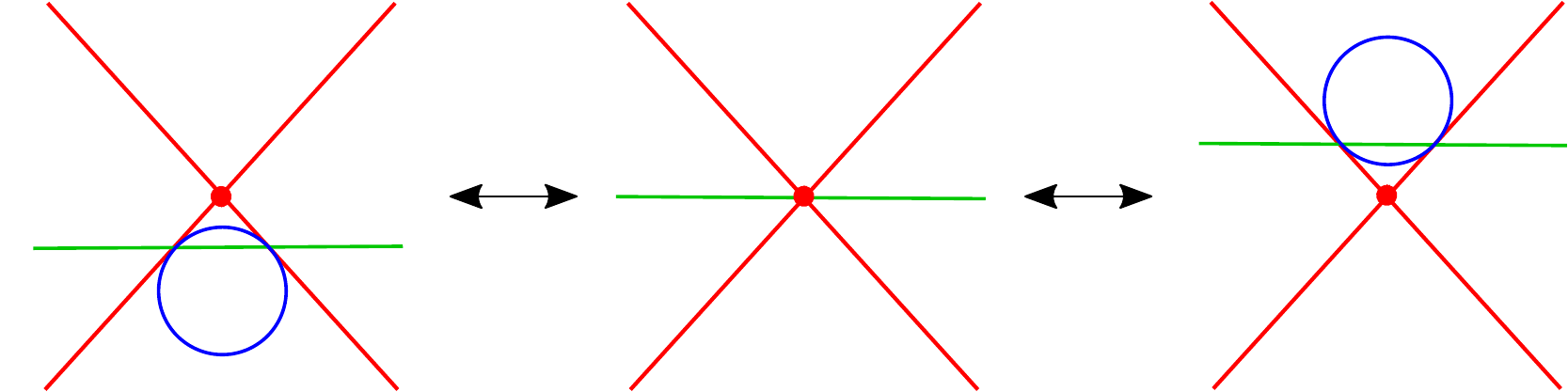}
			%\caption{Parabolic Curve with $A_1^-$ Singularity (Case 2).}
		\end{subfigure}
		\caption{Generic bifurcations in the $LPL$ (red), the $PC$ (blue) and the $MCNC$ (green) at a flat timelike umbilic point.}
		\label{fig:desd_umb_time_K}
	\end{figure}
\end{theo}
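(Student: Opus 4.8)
The plan is to work in the Monge chart $\bx(x,z)=(x,f(x,z),z)$ with $j^2f=0$, compute the leading parts of the three defining functions, extract a single algebraic identity relating them, and deduce both \textbf{(1)} and \textbf{(2)} from it. First I would record that here $\bar l=-f_{xx}$, $\bar m=-f_{xz}$, $\bar n=-f_{zz}$, while $E=1+O(4)$, $F=O(4)$ and $G=-1+O(4)$; since $f$ starts at order three, $\bar l,\bar m,\bar n$ start at order one, with linear parts $L,M,N$ that are explicit linear forms in $(x,z)$ whose coefficients are built from $a_{30},a_{31},a_{32},a_{33}$. Substituting into $\tilde\delta$, $\bar K$ and $\bar H$ gives
$$\tilde\delta=(L+N)^2-4M^2+O(3),\quad \bar K=(LN-M^2)+O(3),\quad \bar H=(N-L)+O(2),$$
so the $MCNC$ is regular with tangent line $\ell_0=N-L$, whereas the $LPL$ and $PC$ have quadratic leading parts.

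Writing $\ell_1=L+N-2M$ and $\ell_2=L+N+2M$, the leading part of $\tilde\delta$ is $\ell_1\ell_2$, so the $LPL$ is always a crossing of two real lines ($A_1^-$); the engine of the proof is the identity
$$4\,\bar K_{\rm lead}=\tilde\delta_{\rm lead}-\bar H_{\rm lead}^2=\ell_1\ell_2-\ell_0^2.$$
From it the transversality in \textbf{(1)} is immediate: a shared tangent line of any two of the three curves forces a nonzero common zero of two of $\ell_0,\ell_1,\ell_2$, which cannot occur once these three linear forms are pairwise independent. I would check that this is an open and dense condition on the cubic coefficients and that the flat timelike umbilic stratum has the expected codimension (the conditions $a_{20}=a_{21}=a_{22}=0$), so the property is generic in $\mathcal{G}_2$ and of codimension $1$.

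To list the configurations I would use the projective parameter $s=\ell_1/\ell_2$ on the pencil of directions: the $LPL$ branches sit at $s=0,\infty$, the $MCNC$ at $s_0$, and, writing $\ell_0=\alpha\ell_1+\beta\ell_2$, the $PC$ branches solve $s=(\alpha s+\beta)^2$, with product $s_1s_2=s_0^2$ and discriminant proportional to $1-4\alpha\beta$. Hence the $PC$ is an isolated point when $\alpha\beta>1/4$; when $\alpha\beta<1/4$ its two branches have the same sign of $s$ and either straddle the $MCNC$ line ($s_0>0$) or lie to one side of it ($s_0<0$). These are precisely the three generic cases, which I would match to the three middle pictures of Figure \ref{fig:desd_umb_time_K}, confirming in each that all real branches are pairwise transverse.

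For part \textbf{(2)} the plan is to realise a generic one-parameter family as a transverse crossing of the codimension-one flat-umbilic stratum. Writing $\bx_t(x,z)=(x,f(x,z)+h(x,z,t),z)$ with $h(\cdot,0)=0$, for $t\neq0$ the second derivatives $f_{xx},f_{xz},f_{zz}$ acquire nonzero constant terms, so $\tilde\delta_t$ and $\bar K_t$ each carry a nondegenerate critical point whose critical value moves linearly in $t$: the $A_1$ crossings open into smooth hyperbolic branches or vanish, the nearby timelike umbilic persists, and the $PC$ detaches from it. Imposing that the family is transverse to the stratum (an open, dense condition) forces the before/at/after pictures to be exactly those of Figure \ref{fig:desd_umb_time_K}, one row per case of part \textbf{(1)}. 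I expect the main obstacle to be precisely this step: the three curves deform simultaneously and in a correlated way through the single function $f_t$, so the argument must keep $\tilde\delta_t,\bar K_t,\bar H_t$ synchronised and rule out any hidden degeneracy away from $t=0$, whereas part \textbf{(1)} collapses, via the identity $4\bar K_{\rm lead}=\ell_1\ell_2-\ell_0^2$, to an elementary arrangement count.
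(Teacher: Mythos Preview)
Your treatment of part \textbf{(1)} is correct and is arguably cleaner than the paper's. The identity $4\bar K_{\rm lead}=\ell_1\ell_2-\ell_0^2$ (with $\ell_0=N-L$, $\ell_1=L+N-2M$, $\ell_2=L+N+2M$) packages exactly the information the paper extracts by computing the tangent lines $r_1,r_2,r_3$ and the sign functions $F,G$ separately; your projective parameter $s=\ell_1/\ell_2$ and the relations $s_1s_2=s_0^2$, $\mathrm{disc}=1-4\alpha\beta$ recover the paper's three cases ($\Lambda_4<0$; $\Lambda_4>0$ with two positions of the $MCNC$ governed by the sign of $\Lambda_6\Lambda_7$) in one stroke. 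What you gain is a single algebraic reason for all the transversality and placement facts; what the paper's coordinate-by-coordinate computation gains is explicit inequalities in the $a_{ij}$ that identify the strata in jet space.

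Part \textbf{(2)}, however, contains a genuine gap. You write that ``$\tilde\delta_t$ and $\bar K_t$ each carry a nondegenerate critical point whose critical value moves linearly in $t$: the $A_1$ crossings open into smooth hyperbolic branches or vanish'', and in the same sentence that ``the nearby timelike umbilic persists''. These two claims are incompatible for $\tilde\delta_t$: the timelike umbilic is a codimension-two condition, so it survives along a curve in any generic $1$-parameter family, and at that moving umbilic the $LPL_t$ is still singular with $\tilde\delta_t=0$. Hence the $A_1^-$ singularity of the $LPL$ does \emph{not} open up or vanish; only the $PC$ does. The paper exploits precisely this persistence: since $j^2\tilde\delta_t$ factors as $f_1(x,z,t)f_2(x,z,t)$ for all $t$, it takes $X=f_1$, $Z=f_2$ as new coordinates, so that the $LPL_t$ is the fixed cross $XZ=0$, then shows that $\bar H_t$ becomes an affine function $\Lambda_6 X+\Lambda_7 Z+\Lambda_8 t$ (up to scale) and that $j^2\bar K_t$ is an $\mathcal R$-versal unfolding of the $A_1$ singularity of the $PC$. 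The synchronisation you rightly worry about is then closed off by Theorem~\ref{prop:inter_tripla}, which forces the intersections of the deformed curves to occur only at the predicted points. Your outline for \textbf{(2)} would need to be rewritten along these lines: keep the $LPL$ fixed (not versally unfolded), and analyse $\bar H_t$ and $\bar K_t$ in the coordinates adapted to its factorisation.
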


\begin{proof}
	\textbf{(1)} It follows from $j^3f = a_{30} x^3+a_{31} x^2z+a_{32} xz^2+a_{33} z^3$ that the $LPL$ has a singularity of type $A_1^-$ at $p$ if and only if $\Lambda_3 = -a_{31}^2 + 3 a_{30} a_{32} + a_{32}^2 - 3 a_{31} a_{33} \neq 0$. The $PC$ has a singularity of type $A_1^\pm$ at $p$ if and only if $\Lambda_4 = -a_{31}^2 a_{32}^2 + 4 a_{30} a_{32}^3 + 4 a_{31}^3 a_{33} - 18 a_{30} a_{31} a_{32} a_{33} + 27 a_{30}^2 a_{33}^2 \neq 0$. The $MCNC$ is a regular curve at $p$ if and only if $\Lambda_5 = (a_{31} - 3 a_{33})^2+(a_{32} - 3 a_{30})^2 \neq 0$.
	
	The tangent lines to the $LPL$ at $p$ are $r_1 : c_1 x + d_1 z = 0$ and $r_2 : c_2 x + d_2 z = 0$, where $c_1 = 3 a_{30} - 2 a_{31} + a_{32}$, $d_1 = a_{31}-2 a_{32}+3 a_{33}$, $c_2 = 3 a_{30} + 2 a_{31} + a_{32}$ and $d_2 = a_{31} + 2 a_{32} + 3 a_{33}$.
	Also, the tangent line to the $MCNC$ at $p$ is $r_3: c_3 x + d_3 z = 0$, with $c_3 = 3 a_{30} - a_{32}$ and $d_3 = a_{31} - 3 a_{33}$. Thus, the condition for $r_3$ to be distinct from $r_1$ and $r_2$ are, respectively,
	$$ \Lambda_6 = a_{31}^2 - 3 a_{30} a_{32} - a_{31} a_{32} + a_{32}^2 + 9 a_{30} a_{33} - 3 a_{31} a_{33} \neq 0,$$
	$$\Lambda_7 = a_{31}^2 - 3 a_{30} a_{32} + a_{31} a_{32} + a_{32}^2 - 9 a_{30} a_{33} - 3 a_{31} a_{33} \neq 0.$$
	
	If $\Lambda_4 < 0$, the $PC$ has a singularity $A_1^+$ at $p$. Therefore, when $\Lambda_6 \neq 0$ and $\Lambda_7 \neq 0$, the only possibility is shown in Figure \ref{fig:desd_umb_time_K} middle last figure.
	
	When $\Lambda_4 > 0$, the $PC$ has a singularity $A_1^-$ and divides $U$ into four regions in a neighborhood of the origin. Suppose $\Lambda_6 \Lambda_7 \neq 0$. %Then the lines $r_1$ and $r_2$ always belong to the same region. 
	Let $v_1$ and $v_2$ be the tangent vectors to $r_1$ and $r_2$, respectively, and $F(x,z) = j^2 \bar{K} = k_{20} x^2 +k_{21} x z+ k_{22} z^2$, with
	$$k_{20} = 4 (a_{31}^2 - 3 a_{30} a_{32}),\quad	k_{21} = 4 (a_{31} a_{32} - 9 a_{30} a_{33}),\quad k_{22} = 4 (a_{32}^2 - 3 a_{31} a_{33}).$$
	The relative position of the $LPL$ and of the $PC$ depends on the sign of $F(v_1) = \Lambda_6^2>0$ and $F(v_2)=\Lambda_7^2>0$. Thus, the branches of the $LPL$ are in the same region delimited by the $PC$.
	%	\begin{figure}[h!]
	%		\centering
	%		\includegraphics[width=0.9 \textwidth]{pos_relat_lpl_k.eps}
	%		\caption{Relative position between the $LPL$ (red) and the $PC$ (blue).}
	%		\label{fig:pos_relativa_lpl_k}
	%	\end{figure}
	
	Finally, to the position of the $MCNC$, we define $G(x,z) = (c_1 x+d_1 z) (c_2 x+d_2 z)$. Note that $G(x,z)=0$ corresponds to the lines $r_1$ and $r_2$. As $F(v_1) = \Lambda_6^2 > 0$ and $F(v_2) = \Lambda_7^2 > 0$, there are two possible combinations for the sign of $F$ and $G$, as shown in Figure \ref {fig:sinal_F_G}.
	\begin{figure}[h!]
		\centering
		\includegraphics[width=0.5 \textwidth]{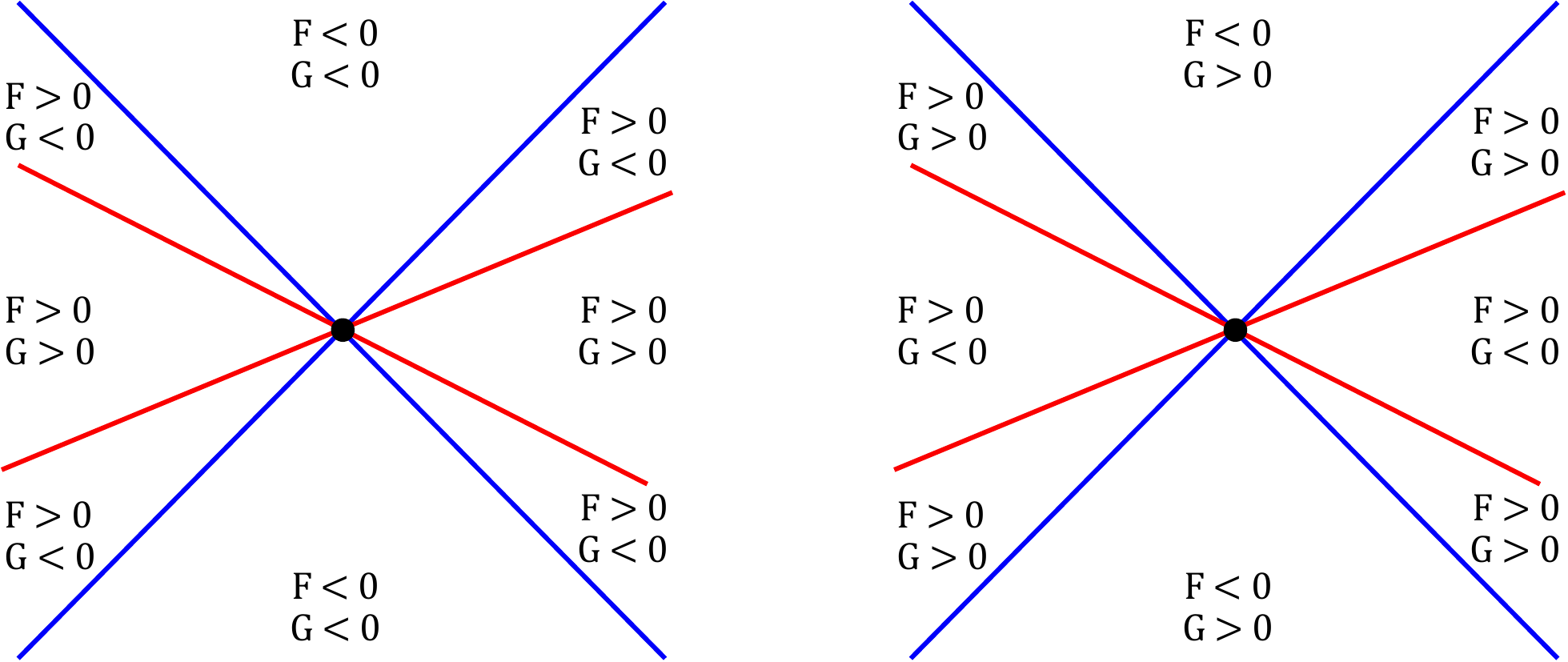}
		\caption{Possible combinations for the signs of $F$ and $G$. The blue lines are $F=0$ and the red lines are $G=0$.}
		\label{fig:sinal_F_G}
	\end{figure}
	
	Let $v_3$ be the tangent vector to the $MCNC$ at $p$. We have $F(v_3) = \Lambda_6 \Lambda_7$ and $G(v_3) = -4\Lambda_6 \Lambda_7$. If $F(v_3) < 0$, then $G(v_3) > 0$. Therefore, the configuration in Figure \ref{fig:sinal_F_G} right figure is the only one possible position when we consider the $MCNC$.
	
	For a generic surface $(M,p) \in \mathcal{G}_2$, all the conditions above are satisfied. Therefore, the configurations of the curves as in Figure \ref{fig:desd_umb_time_K} middle figures.

	\textbf{(2)} Let $M_t \subset \R^3_1$ be a family of surfaces parameterized by $\bx_t : U \to M_t$, with $\bx_t(x,z) = (x,h(x,z,t),z)$ for some $h$ differentiable with $h(x,z,0) = f(x,z)$.
	
	The $MCNC$ of $M_t$ are given by $\tilde \delta_t=0$ and $\bar H_t=0$, respectively. Since $\tilde \delta_t$ has an $A_1^-$-singularity for every $t$, we have $j^2\tilde \delta_t=f_1(x,z,t) f_2(x,z,t)$,
	where $f_1, f_2: \R^3 \to \R$ are degree 1 polynomials in $x$ and $y$, with
	$$\det \left( \begin{array}{cc}
	\frac{\partial f_1}{\partial x}(0,0,0) & \frac{\partial f_1}{\partial y}(0,0,0) \vspace{0.2cm} \\
	\frac{\partial f_2}{\partial x}(0,0,0) & \frac{\partial f_2}{\partial y}(0,0,0)
	\end{array} \right) \neq 0.$$
	
	From the Inverse Function Theorem, it follows that $X = f_1(x,z,t)$, $Z = f_2(x,z,t)$, $T = t$ is a change of coordinates in a neighborhood of the origin. Applying this coordinate change to $\tilde \delta_t$ and $\bar H_t$, we get $\tilde \delta_t \sim_\mathcal{R} XZ$ and $\bar H_t \sim_\mathcal{R} \frac{\Lambda_6}{2 d_1 d_2} X+\frac{\Lambda_7}{2 d_1 d_2} Z+\frac{2 \Lambda_8}{d_1 d_2} T$,	where
	$$\Lambda_8 = -3 a_{30} a_{32} h_{zzt}+9 a_{30} a_{33} h_{xzt}+a_{31}^2 h_{zzt}-a_{31} a_{32} h_{xzt}-3 a_{31} a_{33} h_{xxt}+a_{32}^2 h_{xxt} \neq 0,$$
	and the derivatives of $h$ are evaluated in $(0,0,0)$. Since $\Lambda_6 \neq 0$, $\Lambda_7 \neq 0$ and $\Lambda_8 \neq 0$ are generic conditions, in generic 1-parameter family of surfaces the $LPL$ and the $MCNC$ deforms as in Figure \ref{fig:lpl_H} at a flat timelike umbilic point.
	
	\begin{figure}[h!]
		\centering
		\includegraphics[width=0.5 \textwidth]{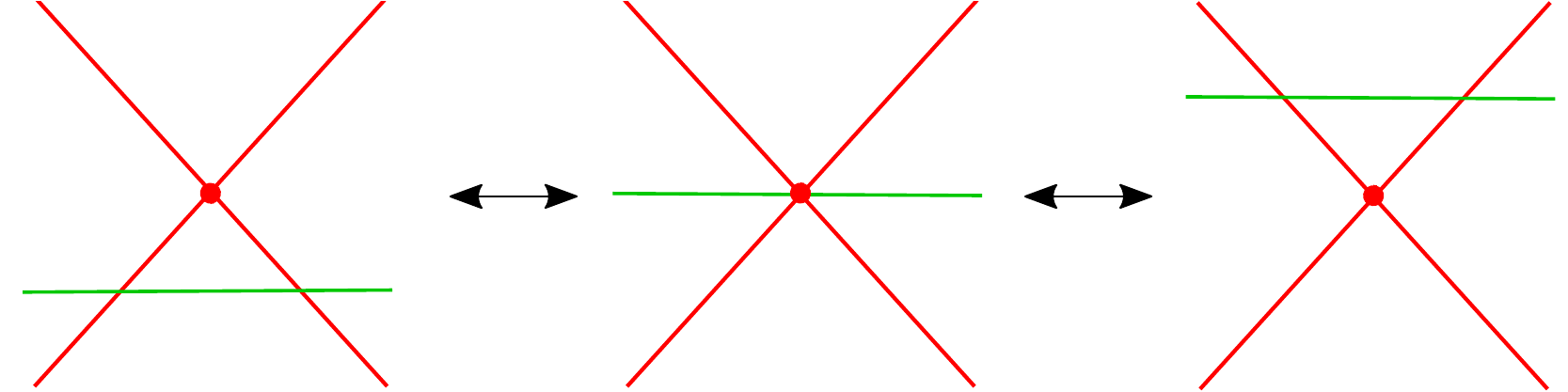}
		\caption{Deformations of the $LPL$ (red) and the $MCNC$ (green) in a generic family of surfaces at a flat timelike umbilic point.}
		\label{fig:lpl_H}	
	\end{figure}
	
	Let $\bar K_t$ be such that the $PC_t$ is defined by $\bar K_t = 0$. We can assume that $\Lambda_4 \neq 0$ and $k_{20} \neq 0$. We have
	$$j^2\bar K_t \sim_{\mathcal{R}^{(2)}} \pm \left(\pm x^2 + z^2 - \frac{\Lambda_8^2 t^2}{12 |k_{20} \Lambda_4|}\right)+t^3 h(t).$$
	and the bifurcations in the $PC$ in generic 1-parameter family of surfaces are generic cone sections as in Figure \ref{fig:desd_umb_time_K}. Therefore, the result follows from Theorem \ref{prop:inter_tripla}.
%	\begin{figure}[h!]
%		\centering
%		\begin{subfigure}[b]{0.45\textwidth}
%			\includegraphics[width=\textwidth]{desd_A1_+_cone.pdf}
%			\caption{$A_1^+$ Singularity.}
%		\end{subfigure}
%		\qquad
%		\begin{subfigure}[b]{0.45\textwidth}
%			\includegraphics[width=\textwidth]{desd_A1_-_cone.pdf}
%			\caption{$A_1^-$ Singularity.}
%		\end{subfigure}
%		\caption{Deformations of the $PC$ in generic 1-parameter families of surfaces at a flat timelike umbilic point.}
%		\label{fig:desd_A1_cone}
%	\end{figure}
\end{proof}

\subsection{Bifurcations at a none Morse singularity of the LPL}\label{sec:umb_A3}

At a timelike umbilic point, the $LPL$ has generically an $A_1^-$-singularity. We consider here the case where the $LPL$ has a more degenerate singularity.
%
%\begin{theo}\label{teo:umb_A3_codim}
%	The timelike umbilic points where the $LPL$ doesn't have a $A^-_1$ singularity have codimension 1.
%\end{theo}
%
%\begin{proof}
%	\textcolor{blue}{Similar to the proof of Theorem \ref{teo:lpl_k_codim}.}
%	Let $M \subset \R^3_1$ be a surface, $p \in M$ a point at Lorentzian region and $\bx : U \to \R^3_1$ a local parameterization of $M$ given as in (\ref{eq:param_monge_time}), with $p = \bx(0,0)$, $\bx (x,z) = (x,f(x,z),z)$ and
%	$$j^2f = a_{20} x^2+a_{21} xz+a_{22} z^2.$$
%	Thus, $p$ is a timelike umbilic point where the $LPL$ doesn't have a Morse singularity if and only if
%	\begin{equation}\label{eq:condicaoA3}
%	\left\{\begin{array}{lcc}
%	a_{20}+a_{22} & = & 0, \\
%	a_{21} & = & 0, \\
%	a_{31}^2+3 a_{33} a_{31}-a_{32}^2-3 a_{32} a_{30} & = & 0.
%	\end{array} \right.
%	\end{equation}
%	
%	\noindent If $S$ is the submanifold of $J^2(2,1)$ defined by (\ref{eq:condicaoA3}), then $S$ has codimension 3 and timelike umbilic points where the $LPL$ doesn't have a Morse singularity have codimension 1.
%\end{proof}
%
Let 
$$\mathcal{G}_3 = \{ (M,p) \in \mathcal{G}: p {\rm \,\,is \,\,a \,\,non-Morse \,\,singularity \,\,of \,\,the \,\,} LPL \}.$$
Given $(M,p) \in \mathcal{G}_3$, take $\bx : U \subset \R ^2 \to \R^3_1$ the local parameterization of $M$ given by Theorem \ref{the:param_lightlike}. 
%The BDE of the lines of curvature of $M$ with respect to $\bx$ is
%$$\bar n dv^2+\bar l du^2=0.$$
The $LPL$ in $U$ is given by $\tilde \delta (u,v) = \bar l (u,v)\bar n(u,v) = 0$, with
$j^2 \bar l = \sum_{i=1}^{2} \sum_{j=0}^{i} a_{ij} x^{i-j} y^j$ and $j^2 \bar n = \sum_{i=1}^{2} \sum_{j=0}^{i} b_{ij} x^{i-j} y^j$.
The $LPL$ has an $A_1^-$-singularity at the origin when $a_{10} b_{11}-a_{11} b_{10} \neq 0$.

\begin{theo}\label{teo:umb_time_A3}
	For a generic $(M,p) \in \mathcal{G}_3$, we have
	\begin{description}
		\item[(1)] The $LPL$ has a $A^-_3$ singularity at $p$;
		\item[(2)] The bifurcation of the $LPL$ in a generic 1-parameter family of surfaces $M_t$, with $M_0=M$, are as in Figure \ref{fig:umb_time_A3}.
	\end{description}
	\begin{figure}[h!]
		\centering
		\includegraphics[scale=0.4]{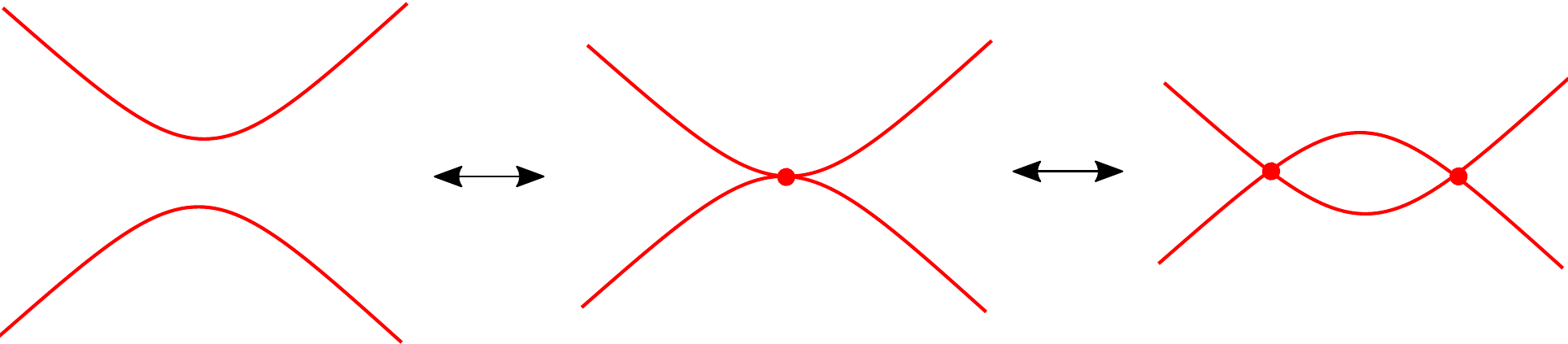}
		\caption{Bifurcations of the $A^-_3$ singularity of the $LPL$ on a generic 1-parameter family of surfaces.}
		\label{fig:umb_time_A3}
	\end{figure}
\end{theo}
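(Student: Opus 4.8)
The plan is to exploit the factorisation of the $LPL$ furnished by the parametrisation of Theorem~\ref{the:param_lightlike}. With $E=G=0$ one has $\tilde\delta=4F^2\bar l\bar n$ and $F\neq0$, so the $LPL$ is the union of the two curves $\{\bar l=0\}$ and $\{\bar n=0\}$, and its singularity type at $p$ is the $\mathcal R$-type of the product $\bar l\bar n$. Since the Morse ($A_1^-$) case is exactly $a_{10}b_{11}-a_{11}b_{10}\neq0$, on $\mathcal G_3$ this determinant vanishes, which means the linear parts $a_{10}x+a_{11}y$ and $b_{10}x+b_{11}y$ of $\bar l$ and $\bar n$ are proportional; that is, the two branches share a common tangent line at $p$. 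First I would record that, off a higher-codimension set, both linear parts are non-zero, so each branch $\{\bar l=0\}$, $\{\bar n=0\}$ is a smooth real curve and the two are tangent at $p$.

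To identify the type I would straighten the first branch: choose coordinates in which $\{\bar l=0\}$ becomes the axis $\{x=0\}$, so that $\bar l$ is a unit times $x$, and the tangent branch $\{\bar n=0\}$ takes the form $x=c\,y^2+O(y^3)$, whence $\bar n$ is a unit times $x-c\,y^2-\cdots$. Then $\bar l\bar n$ is a unit times $x^2-c\,xy^2-\cdots$, and completing the square in $x$ (setting $w=x-\frac{c}{2}y^2$) gives $w^2-\frac{c^2}{4}y^4+\cdots$, which is manifestly the $A_3^-$ form $x^2-y^4$; the isolated-point type $A_3^+$ is excluded since $\{\bar l=0\}$ is a genuine real branch through $p$. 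The coefficient $c$ measures the difference of the curvatures of the two branches at $p$, i.e.\ ordinary tangency (contact exactly $2$), and I would express the condition $c\neq0$ as the non-vanishing of an explicit polynomial $\Lambda$ in the $a_{ij},b_{ij}$. Part \textbf{(1)} then follows by checking that $A_3$ is $4$-determined and that the loci where nondegeneracy fails — a vanishing linear part, or $c=0$ (contact $\ge3$, giving $A_5$ or worse) — cut out submanifolds of $J^k(2,1)$ of codimension strictly larger than that of $\mathcal G_3$, so $A_3^-$ is open and dense in $\mathcal G_3$.

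For the deformation I would take a generic $1$-parameter family $\bx_t$ given by Theorem~\ref{the:param_lightlike} and straighten the moving branch $\{\bar l_t=0\}$ to $\{x=0\}$ by a $t$-dependent change of coordinates, so that $LPL_t$ is the union of $\{x=0\}$ with the second branch $\{x=\psi(y,t)\}$, where $\psi(y,0)=c\,y^2+\cdots$ with $c\neq0$. The two branches meet where $\psi(y,t)=0$; since $\psi(0,0)=0$ and $\partial_y\psi(0,0)=0$, the relevant quantity is $e=\partial_t\psi(0,0)$, and the genericity of the family is precisely transversality of its Monge--Taylor path to the submanifold cutting out $\mathcal G_3$, which amounts to $e\neq0$ (an open and dense condition expressible through the $t$-derivatives of the coefficients of $\bx_t$). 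Then $\psi(y,t)=c\,y^2+e\,t+\cdots$, so for small $t\neq0$ the two branches meet in two transverse points when $et/c<0$ and in none when $et/c>0$, while at $t=0$ they are tangent. Drawing the union of the two branches in these three regimes reproduces Figure~\ref{fig:umb_time_A3}; equivalently, $\tilde\delta_t$ is an $\mathcal R$-versal unfolding of the $A_3^-$ singularity, whose zero set undergoes the standard $A_3^-$ bifurcation.

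The main obstacle is part \textbf{(1)}: turning the informal statement ``ordinary tangency of the two branches'' into verified nondegeneracy and genericity. Concretely this means (i) computing $c$ (equivalently $\Lambda$) explicitly in terms of $a_{ij},b_{ij}$ and invoking finite ($4$-)determinacy so that the jet reduction determines the germ, (ii) checking that the degenerate sub-loci are genuinely of higher codimension, so that $A_3^-$ is generic within $\mathcal G_3$, and (iii) confirming the sign is $A_3^-$. Once \textbf{(1)} is in place, part \textbf{(2)} is comparatively routine, since it reduces to the generic breaking of a tangency between two smooth curves.
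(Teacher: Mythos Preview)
Your proposal is correct and follows essentially the same strategy as the paper: both arguments hinge on the factorisation $\tilde\delta=4F^2\bar l\,\bar n$ coming from the lightlike parametrisation, so that the $LPL$ is the union of the two smooth curves $\{\bar l=0\}$ and $\{\bar n=0\}$; both deduce $A_3^-$ from the two branches having an ordinary tangency (the paper writes down the explicit coefficient $\Lambda_9$ playing the role of your $c$ and records $\tilde\delta\sim_{\mathcal R^{(4)}}\pm(u^2-\Lambda_9^2v^4)$, but this is exactly your completing-the-square computation), and both rule out $A_3^+$ because the branches are real.

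For part \textbf{(2)} the geometric content is again the same---the singularities of $LPL_t$ are the common zeros of $(\bar l_t,\bar n_t)$, and this map has a generic fold, so one sees $0$ or $2$ umbilic points for $t\ne0$---but the packaging differs slightly. You straighten one branch and read the bifurcation directly from the roots of $\psi(y,t)=cy^2+et+\cdots$, which is more elementary. The paper instead puts the family into the $A_3$ versal-unfolding form $x^2-\Lambda_9^2y^4+\gamma_1(t)y^2+\gamma_2(t)y+\gamma_3(t)$, computes the tangent $\gamma'(0)=\bigl(\Lambda_9/(a_{10}b_{10}),0,0\bigr)$, and then uses the $0$/$2$ singularity count to locate the path $\gamma$ in the Whitney stratification of the swallowtail (stratum $(IV)$ versus $(IX)$/$(X)$). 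Your route avoids the detour through the bifurcation set at the cost of not explicitly identifying the stratum; the paper's route is more systematic if one wants to slot the result into the general $A_3$ picture of the Appendix.
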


\begin{proof}
	\textbf{(1)} We have $a_{10} b_{11} - a_{11} b_{10} = 0$ as the $LPL$ has singularity more degenerate than $A_1^-$. Suppose that $a_{10} \neq 0$ or $b_{10} \neq 0$. Then $\tilde \delta \sim_{\mathcal{R}^{(4)}} \pm (u^2-\Lambda_9^2 v^4)$,
	with
	$$\Lambda_9 = -a_{11}^2 a_{20} b_{10} + a_{10} a_{11} a_{21} b_{10} - a_{10}^2 a_{22} b_{10} + a_{10} a_{11}^2 b_{20} - a_{10}^2 a_{11} b_{21} + a_{10}^3 b_{22}.$$
	Generically, $\Lambda_9 \neq 0$ and the singularity is of type $A_3^-$.

	\noindent \textbf{(2)} We take $\bx_t : U \to \R^3_1$ a parameterization of $M_t$ given by Theorem \ref{the:param_lightlike} which depends smoothly on $t \in (\R,0)$ and $\bx_0 = \bx$. Denote by $E(u,v,t)$, $F(u,v,t)$, $G(u,v,t)$, $\bar l(u,v,t)$, $\bar m(u,v,t)$ and $\bar n(u,v,t)$ the coefficients of the first and second fundamental forms of $M_t$ in $(u,v) \in U$ with respect to $ \bx_t$. Thus, $E(u,v,t) = G(u,v,t) = 0$ on $U$ and $\tilde \delta_t (u,v) = \bar l (u,v,t) \bar n(u,v,t)$.

	The deformation of the $A_3^-$ singularity of the $LPL$ (see \S \ref{sec:appendix}) induced by the family $M_t$ satisfy
	$$\tilde \delta_t(u,v) \sim_{\mathcal{R}} \pm (x^2-\Lambda_9^2 y^4 + \gamma_{1}(t) y^2+\gamma_{2}(t)y+\gamma_{3}(t)),$$
	where $\gamma_{1}$, $\gamma_{2}$ and $\gamma_{3}$ are germs of differentiable functions. We get a germ of a differentiable curve $\gamma (t) = (\gamma_{1}(t),\gamma_{2}(t),\gamma_{3}(t))$ in $\R^ 3$ with $\gamma'(0)=\left(\frac{\Lambda_9}{a_{10} b_{10}},0,0\right)$.
	
	A point $q = \bx_t(u,v) \in M_t$ is a singularity of the $LPL$ if and only if $q$ is an umbilic point, that is, $\sigma_t(u,v) = (0,0)$, where $\sigma_t(u,v) = \left(\bar{l} (u,v,t),\bar{n} (u,v,t)\right)$. We have
	$$\sigma_t(u,v) \sim_{\mathcal{A}} \left(u,\frac{a_{10}^2 (a_{10} \bar n_t(0,0,0) -b_{10} \bar l_t(0,0,0)) t}{\Lambda_9} + v^2 \right),$$
	provided that $a_{10}^2 (a_{10} \bar n_t(0,0,0) -b_{10} \bar l_t(0,0,0)) \neq 0$. Therefore, $\sigma_t$ can have either $0$ or $2$ zeros when $t \neq 0$, depending on the sign of $A = \Lambda_9(a_{10} \bar n_t(0,0,0) -b_{10} \bar l_t(0,0,0)) t$.
		
	When $A<0$, the only possibility is that $\gamma$ belongs to the stratum $(IV)$ of Figure \ref{fig:desd_A3} left figure, because there are two singularities. On the other hand, when $A>0$, $\gamma$ must be in the $(IX)$ or $(X)$ stratum, since $\gamma$ is regular. Therefore, the deformation of the $LPL$ with singularity $A_3^-$ is as shown in Figure \ref{fig:umb_time_A3}.
\end{proof}

\subsection{Bifurcations at a singularity of the PC and MCNC}\label{sec:sing_pc_mcnc}

Regular points on the $PC$ and $MCNC$ have codimension 0. Therefore, in this subsection, we will study the singularities of these curves at points of codimension 1.

Consider $MCNC$ in Lorentzian region, the Riemannian case is analogous. Let 
$$\mathcal{G}_4 = \{(M,p)\in\mathcal{G}:p {\rm \,\,is \,\,a \,\,singularity \,\,of \,\,the \,\,}MCNC{\rm \,\,in \,\,Lorentzian \,\,region}\}.$$

\begin{theo}\label{teo:sing_mcnc}
	For a generic $(M,p) \in \mathcal{G}_4$, we have
	\begin{description}
		\item[(1)] The $MCNC$ has a $A_1$-singularity at $p$;
		\item[(2)] In generic 1-parameter family of surfaces, the $A_1$-singularity of the $MCNC$ at $p$ is $\mathcal{R}$-versally deformed.
	\end{description}
\end{theo}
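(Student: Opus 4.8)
The plan is to run the whole argument in the local parametrization $\bx:U\to\R^3_1$ furnished by Theorem \ref{the:param_lightlike} for a point in the Lorentzian region, so that $E=G=0$ on $U$ and therefore $\bar H=-2F\bar m$ with $F$ nowhere zero. Then $MCNC=\{\bar m=0\}=\{\bar H=0\}$, and since $\bar m(0,0)=0$ at a point of the $MCNC$, a one-line computation shows that $\bar H$ and $\bar m$ share the same critical points there and have Hessians proportional by the nonzero factor $-2F(0,0)$; hence they have the same singularity type and I may work with whichever is convenient. Writing $j^k\bar m=\sum_{s,i}m_{si}\,u^{s-i}v^i$ as in \S\ref{sec:int_lpl_k}, the hypothesis $(M,p)\in\mathcal{G}_4$, that $p=\bx(0,0)$ be a singular point of the $MCNC$, is exactly $m_{00}=m_{10}=m_{11}=0$; these three equations cut out a codimension-$3$ submanifold of $J^k(2,1)$, which is why $\mathcal{G}_4$ is a codimension-$1$ property.

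For part (1), I would look at the $2$-jet $j^2\bar m=m_{20}u^2+m_{21}uv+m_{22}v^2$ and note that $\bar m$ (equivalently $\bar H$) has an $A_1$ singularity precisely when this quadratic form is nondegenerate, i.e. $m_{21}^2-4m_{20}m_{22}\neq 0$, in which case the Morse Lemma gives $\bar H\sim_{\mathcal R}\pm(x^2\pm y^2)$. It then remains to check that $m_{21}^2-4m_{20}m_{22}\neq 0$ is open and dense on $\mathcal{G}_4$: openness is immediate, while density amounts to showing that adjoining the equation $m_{21}^2-4m_{20}m_{22}=0$ raises the codimension by one, so that the non-Morse ($A_{\ge 2}$) locus is a codimension-$2$ property that a generic member of $\mathcal{G}_4$ avoids. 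This independence check is the main obstacle, and I expect to settle it exactly as in Theorem \ref{teo:umb_time_A3}: by exhibiting enough free coefficients of $f$ (among the third-order terms) that move $(m_{20},m_{21},m_{22})$ off the discriminant while preserving $m_{00}=m_{10}=m_{11}=0$.

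For part (2), I would take a generic $1$-parameter family $M_t$ with $\bx_t$ still of the form given by Theorem \ref{the:param_lightlike} (so $E=G=0$ for every $t$ and $\bx_0=\bx$), and view $\bar H_t(u,v)$ as a one-parameter unfolding of the $A_1$ germ $\bar H_0=\bar H$. By the Thom--Mather versality theorem this unfolding is $\mathcal R$-versal if and only if
$$\mathcal{E}_2=J_{\bar H_0}+\mathbb{R}\cdot\dot{\bar H},$$
where $\mathcal{E}_2$ is the local ring of germs $(\R^2,0)\to\R$, $J_{\bar H_0}=\langle(\bar H_0)_u,(\bar H_0)_v\rangle_{\mathcal{E}_2}$ is the Jacobian ideal, and $\dot{\bar H}=\tfrac{\partial}{\partial t}\big|_{t=0}\bar H_t$. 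Since $\bar H_0$ is $A_1$, its Hessian at the origin is nondegenerate, so the two generators of $J_{\bar H_0}$ have independent linear parts and $J_{\bar H_0}=\mathfrak{m}_2$; the criterion therefore collapses to $\dot{\bar H}(0,0)\neq 0$. Using $\bar H_t=-2F_t\bar m_t$ together with $\bar m_0(0,0)=0$ gives $\dot{\bar H}(0,0)=-2F(0,0)\,\dot{\bar m}(0,0)$ with $F(0,0)\neq 0$, so $\mathcal R$-versality is equivalent to $\dot{\bar m}(0,0)\neq 0$; geometrically this says the critical value of $\bar m_t$ crosses zero with nonzero speed, the drift of the critical point contributing nothing to first order because $\nabla\bar m_0(0,0)=0$.

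Finally I would observe that $\dot{\bar H}(0,0)\neq 0$ is an open and dense condition on the deforming term $h$, hence holds for a generic $1$-parameter family, yielding the versal model $\bar H_t\sim_{\mathcal R}x^2\pm y^2+\lambda(t)$ with $\lambda'(0)\neq 0$ and the expected bifurcation of the $MCNC$ (an isolated point opening into a circle or emptiness in the $A_1^+$ case, a transverse crossing resolving into the two hyperbola branches in the $A_1^-$ case). Overall, the only genuinely delicate step is the genericity verification of part (1); part (2) is a direct application of the versality criterion once the reduction $\bar H=-2F\bar m$ is in place.
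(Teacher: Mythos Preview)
Your proposal is correct and takes a genuinely different route from the paper. The paper works in the Monge form $\bx(x,z)=(x,f(x,z),z)$ of \eqref{eq:param_monge_time}: membership in $\mathcal{G}_4$ translates into the relations $a_{22}=a_{20}$, $a_{32}=3a_{30}$, $a_{31}=3a_{33}$ on the Monge coefficients, and the Morse condition becomes the explicit polynomial $\Lambda_{10}\neq 0$ in the $a_{ij}$; for part~(2) the family is written as $\bx_t(x,z)=(x,h(x,z,t),z)$ and the versality condition is read off as $h_{xx}(0,0,0)-h_{yy}(0,0,0)\neq 0$ after reducing $j^2\bar H_t$ to a normal form. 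Your approach instead follows the pattern of Theorems~\ref{teo:lpl_k_mcn} and~\ref{teo:umb_time_A3}: you pass to the lightlike chart of Theorem~\ref{the:param_lightlike} with $E=G=0$, use the factorisation $\bar H=-2F\bar m$ to replace $\bar H$ by $\bar m$, and phrase both the $A_1$ condition and the versality condition directly in terms of the jet of $\bar m$ and of $\dot{\bar m}(0,0)$. Conceptually your route is cleaner (the reduction to $\bar m$ makes the whole computation linear in one unknown function), and it has the pleasant feature of unifying the method with the other Lorentzian-region theorems; the paper's route, on the other hand, delivers the genericity conditions explicitly in the Monge coefficients $a_{ij}$ and in the partial derivatives of $h$, which is closer to how one would verify them for a concrete surface. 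Your observation that the ``delicate step'' is the codimension count for $m_{21}^2-4m_{20}m_{22}=0$ is fair, but note that the paper handles the analogous step at the same level of detail (it simply asserts $\Lambda_{10}\neq 0$ generically), so you are not leaving a larger gap than the original.
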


\begin{proof}
	\textbf{(1)} Given $(M,p) \in \mathcal{G}_4$, take $\bx : U \subset \R ^2 \to \R^3_1$ the local parameterization of $M$ given as in (\ref{eq:param_monge_time}) by $\bx(x,z) = (x,f(x,z),z)$ with $p = \bx(0,0)$. Since $p$ is a singularity of $MCNC$, we have $a_{22}=a_{20}$, $a_{32}=3 a_{30}$ and $a_{31}=3 a_{33}$. This is a Morse singularity when $\Lambda_{10} \neq$ with
	$$\begin{array}{ccl}
	\Lambda_{10} & = & -16 \left(4 a_{20}^3-a_{20} a_{21}^2-6 a_{40}+a_{42}\right) \left(4 a_{20}^3-a_{20} a_{21}^2-a_{42}+6 a_{44}\right)\\
	&&+\left(8 a_{20}^2 a_{21}-2 a_{21}^3-6 a_{41}+6 a_{43}\right)^2.
	\end{array}$$
		
	\noindent \textbf{(2)} Let $M_t \subset \R^3_1$ be a 1-parameter family of surfaces parameterized by $\bx_t : U \to M_t$, with $\bx_t(x,y) = (x,h(x,z,t),z)$ for some $h$ differentiable with $h(x,z,0) = f(x,z)$ and $f$ as in item (1). The $MCNC_t$ is formed by the points $\bx_t(x,y)$ where $\bar H_t(x,y) = 0$. Suppose $\Lambda_{11} = 4 a_{20}^3-a_{20} a_{21}^2-6 a_{40}+a_{42} \neq 0$. We have $j^2\tilde H_t \sim_{\mathcal{R}^{(2)}} \rho(t) + x^2 + \frac{\Lambda_{10}}{8 \Lambda_{11}} y^2$ and $j^1\rho = (h_{xx}(0,0,0)-h_{yy}(0,0,0)) t$. Therefore, the $MCNC_t$ is a $\mathcal{R}-$versal deformation of the $MCNC$ of $M$.
\end{proof}

The $PC$ of a surface is independent of the metric because it can be defined using the surface contact with the tangent plane \cite{GeoGen}. Therefore, the results obtained in the Lorentzian region can be extended to the Riemannian region. Let 
$$\mathcal{G}_5 = \{(M,p) \in \mathcal{G} :p {\rm \,\,is \,\,a \,\,singularity \,\,of \,\,the \,\,}PC{\rm\,\, in \,\,Lorentzian \,\,region}\}.$$

\begin{theo}\label{teo:sing_pc}
	For a generic $(M,p) \in \mathcal{G}_5$, we have
	\begin{description}
		\item[(1)] The $PC$ has a $A_1$-singularity at $p$;
		\item[(2)] In generic 1-parameter family of surfaces, the $A_1$-singularity of the $PC$ at $p$ is $\mathcal{R}$-versally deformed.
	\end{description}
\end{theo}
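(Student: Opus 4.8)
The plan is to mirror the argument of Theorem~\ref{teo:sing_mcnc}, replacing the mean-curvature numerator $\bar H$ by the Gaussian-curvature numerator $\bar K=\bar l\bar n-\bar m^2$, and to exploit the observation recorded just before the statement that the $PC$ is metric-independent. I would start from the Monge parametrisation $\bx(x,z)=(x,f(x,z),z)$ as in~(\ref{eq:param_monge_time}), so that $j^1f=0$ and the tangent plane at $p$ is $\{y=0\}$. With this frame $\bx_x\times\bx_z$ is, at the origin, a non-null multiple $c\,\be$ of the normal direction, whence $\bar l(0,0)=2ca_{20}$, $\bar m(0,0)=ca_{21}$, $\bar n(0,0)=2ca_{22}$ and therefore $\bar K(0,0)=c^2(4a_{20}a_{22}-a_{21}^2)$; more generally $\bar K$ agrees up to a non-vanishing factor with the Hessian determinant of $f$, so its gradient and Hessian at $p$ are explicit polynomials in the $a_{2j}$ and $a_{3j}$.

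For item \textbf{(1)}, the hypothesis $(M,p)\in\mathcal G_5$ translates into $\bar K(0,0)=0$ together with $\bar K_x(0,0)=\bar K_z(0,0)=0$, three conditions cutting out a submanifold of $J^k(2,1)$. The first is a single relation among the $a_{2j}$, while the two gradient conditions involve the $a_{3j}$ linearly and are independent of it; a codimension count in the sense of \S\ref{sec:codimension} then gives codimension $3$ in jet space, i.e. codimension $1$ for the curve. I would next compute the Hessian of $\bar K$ at $p$ and identify the $A_1$-condition with the non-vanishing of its determinant, say $\Lambda\neq0$ for an explicit polynomial $\Lambda$ in the $a_{ij}$; since $\{\Lambda\neq0\}$ is open and dense, a generic $(M,p)\in\mathcal G_5$ carries an $A_1$-singularity of the $PC$ (with type $A_1^\pm$ fixed by the sign of $\Lambda$).

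For item \textbf{(2)}, I would take a family $\bx_t(x,z)=(x,h(x,z,t),z)$ with $h(\cdot,\cdot,0)=f$ and set $PC_t=\{\bar K_t=0\}$. Because $j^2\bar K_0$ is a non-degenerate quadratic form by item (1), the Morse lemma with parameters reduces $j^2\bar K_t\sim_{\mathcal R^{(2)}}\rho(t)\pm x^2\pm z^2$, where $\rho(0)=0$ and, since $\nabla\bar K_0(0,0)=0$, the motion of the critical point contributes only at higher order so that $\rho'(0)=\partial_t\bar K_t(0,0)|_{t=0}$ is (up to the factor $c^2$) the linear combination $a_{22}h_{xxt}+a_{20}h_{zzt}-a_{21}h_{xzt}$ evaluated at the origin. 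An $A_1$ function-germ is $\mathcal R$-versally unfolded precisely when the constant term moves, i.e. when $\rho'(0)\neq0$; as this is an open dense condition on $h$, the $A_1$-singularity of the $PC$ is $\mathcal R$-versally deformed in a generic $1$-parameter family.

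The hard part will be purely computational: producing the explicit expressions for $\bar K$, its gradient and Hessian in the Lorentzian Monge frame, and then checking that both $\Lambda$ and the functional $\rho'(0)$ are non-trivial, so that the corresponding non-degeneracy sets are genuinely dense. Conceptually this is guaranteed by the metric-independence of the $PC$ (\cite{GeoGen}), which identifies $\{\bar K=0\}$ with the Euclidean parabolic curve and so permits importing the classical analysis of~\cite{BruceGiblinTari}; but one must still carry the third-jet coefficients $a_{3j}$ through the two gradient conditions, and that bookkeeping is the only genuinely fiddly step.
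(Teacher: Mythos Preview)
Your proposal is correct and follows exactly the paper's own approach: the paper's proof is the single line ``Similar to the proof of the Theorem~\ref{teo:sing_mcnc}'', and your plan is precisely to carry out that parallel argument with $\bar K$ in place of $\bar H$. The computational outline you give (three jet conditions for $p\in\mathcal G_5$, genericity of the Hessian non-degeneracy $\Lambda\neq0$, and versality via $\rho'(0)\neq0$) matches the structure of the MCNC proof point for point.
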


\begin{proof}
	Similar to the proof of the Theorem \ref{teo:sing_mcnc}.
\end{proof}

\section{Bifurcations at points on the LD}\label{sec:ld}

\subsection{Bifurcations at $\textbf{LD} \cap \textbf{LPL} \cap \textbf{MCNC}$}\label{sec:int_ld_lpl}

In this section, we study the intersections between the $LD$ and the $LPL$ in the case where these curves are regular (singular case is considered in \S \ref{sec:umb_light}). It is show next that $LD \cap LPL$ coincide with $LD \cap MCNC$. Therefore, every point in $LD \cap LPL$ is also in $MCNC$.

\begin{theo}\label{teo:ld_lpl_H}
	Let $M \subset \R^3_1$ be a smooth surface and $p \in LD$. Then $p \in LPL$ if and only if $p \in MCNC$. Furthemore, if $p$ is a regular point on the $LD$, then $m(LD,LPL:p) = 2 m(LD,MCNC:p)$. Consequently, $m(LD,LPL:p)$ is always even.
\end{theo}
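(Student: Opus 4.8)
The plan is to mirror the strategy of Theorem~\ref{prop:inter_tripla}, but using the parametrisation adapted to the $LD$ rather than to the Lorentzian region. First I would take the local parametrisation $\bx : U \to \R^3_1$ provided by Theorem~\ref{the:param_lightlike} at $p \in LD$, so that $E = F = 0$ \emph{along the $LD$} (though not necessarily on all of $U$). Substituting $E = F = 0$ into the defining function $\tilde\delta = (E\bar n - G\bar l)^2 - 4(F\bar n - G\bar m)(E\bar m - F\bar l)$ of the $LPL$ and $\bar H = \bar l G - 2\bar m F + \bar n E$ of the $MCNC$, evaluated at points of the $LD$, collapses these to $\tilde\delta = G^2 \bar l^2$ and $\bar H = G\bar l$ on the $LD$.

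The elementary fact I would then record is that $G \neq 0$ on the $LD$: the first fundamental form has rank exactly $1$ there, since a tangent $2$-plane in $\R^3_1$ cannot be totally null (the Witt index of a signature $(2,1)$ form is $1$), so with $E = F = 0$ the remaining entry $G$ must be nonzero. From $\tilde\delta = G^2\bar l^2$ and $\bar H = G\bar l$ together with $G \neq 0$ it follows at once that, for $p \in LD$, the conditions $\tilde\delta(p) = 0$, $\bar l(p) = 0$ and $\bar H(p) = 0$ are pairwise equivalent, which gives the asserted equivalence $p \in LPL \Leftrightarrow p \in MCNC$ and settles the first part.

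For the contact statement I would assume $p$ is a regular point of the $LD$ and choose a regular parametrisation $\alpha : (\R,0) \to U$ of the $LD$ with $\alpha(0) = p$. Since every point $\alpha(s)$ lies on the $LD$, the pointwise identities of the previous step yield $\tilde\delta\circ\alpha = (G\circ\alpha)^2(\bar l\circ\alpha)^2$ and $\bar H\circ\alpha = (G\circ\alpha)(\bar l\circ\alpha)$. Because $G\circ\alpha$ is a non-vanishing, hence invertible, germ, the order of vanishing at $s = 0$ of $\tilde\delta\circ\alpha$ is exactly twice that of $\bar l\circ\alpha$, while the order of vanishing of $\bar H\circ\alpha$ equals that of $\bar l\circ\alpha$. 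Reading these orders of vanishing as the contact orders $m(LD,LPL:p)$ and $m(LD,MCNC:p)$ gives $m(LD,LPL:p) = 2\,m(LD,MCNC:p)$, and in particular $m(LD,LPL:p)$ is even.

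The one point requiring genuine care — and the step I would flag as the main obstacle — is the bookkeeping of where $E = F = 0$ actually holds: these vanish only \emph{on} the $LD$, so every simplification of $\tilde\delta$ and $\bar H$ must be performed along the curve $\alpha$ and must not be read as an identity on all of $U$. Once one restricts to $\alpha$ from the outset this is automatic, but it is exactly where a careless argument could fail. Pinning down $G \neq 0$ (needed both for the equivalence and to treat $G\circ\alpha$ as a unit) is the other small verification to secure.
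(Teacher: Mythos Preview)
Your proposal is correct and follows essentially the same approach as the paper: choose the parametrisation of Theorem~\ref{the:param_lightlike} so that $E=F=0$ along the $LD$, reduce $\tilde\delta$ and $\bar H$ to $G^2\bar l^2$ and $G\bar l$ on the $LD$, and compose with a regular parametrisation of the $LD$ to read off the contact orders. If anything, you are slightly more careful than the paper in explicitly justifying $G\neq 0$ on the $LD$ and in flagging that the simplification is valid only along the curve.
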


\begin{proof}
	We take $\bx : U \to \R^3_1$ a local parameterization of $M$ as in Theorem \ref{the:param_lightlike} with $p = \bx(0,0)$. Thus, $\bar H (0,0) = \bar l(0,0) G(0,0)$ and $\tilde \delta(0,0) = l(0,0)^2 G(0,0)^2$. Therefore, $\bar H(0,0) = 0$ if and only if $\tilde \delta(0,0) = 0$.
	
	Let $\gamma: (\R,0) \to (\R^2,0)$ be a parametrization of the $LD$ at a regular point $p$. Since $E = F = 0$ along the $LD$, it follows that $E \circ \gamma$ and $F \circ \gamma$ are identically zero. Therefore,
	$$\bar H (\gamma (x)) = (\bar l G-2\bar mF+\bar n E) (\gamma(x)) = \bar l (\gamma(x)) G (\gamma (x)),$$
	$$\tilde \delta(\gamma(x)) = ((\bar lG-\bar nE)^2-4(\bar mG-\bar nF)(\bar lF-\bar mE))(\gamma (x)) = \left( \bar l (\gamma(x)) G (\gamma(x)) \right)^2$$
	and the result follows.
\end{proof}

%Let $M \subset \R^3_1$ be a surface and $p \in M$. Take a local parameterization $\bx: U \to \R^3_1$ of $M$ given by $\bx(x,y)=(x,y,f(x,y))$, with $p = \bx(0,0)$ and
%$$j^3f = a_{00}+a_{10} x+a_{11}y+a_{20} x^2+a_{21} x y+a_{22} y^2+a_{30} x^3+a_{31} x^2 y+a_{32} x y^2+a_{33} y^3.$$
%Thus, $p$ belongs to the intersection of $LD$ and $LPL$ if and only if
%\begin{equation}\label{eq:ld_lpl}
%	\left\{\begin{array}{lcc}
%	a_{10}^2+a_{11}^2 & = & 1, \vspace{0.3cm}\\
%	a_{11}^2 a_{22} + a_{11} a_{10} a_{21} + a_{10}^2 a_{20} & = & 0.
%	\end{array}\right.
%\end{equation}
%It is shown in \cite{IzumiyaTari} that the $LD$ and the $LPL$ have, generically, ordinary tangency at their points of intersection. Indeed, this occurs when $\Lambda_8 \neq 0$, with
%$$\begin{array}{ccl}
%	\Lambda_8 & = & \left(4 a_{22} a_{20}-a_{21}^2\right) \left(2 a_{11} a_{10} ( a_{20}-a_{22})+(a_{11}^2-a_{10}^2) a_{21}\right) \vspace{0.2cm}\\
%	& & +3 a_{10}^2 (2 a_{11} a_{22}+a_{10} a_{21}) a_{30}-3 a_{10}^2 (a_{11} a_{21}+2 a_{10} a_{20}) a_{31}\vspace{0.2cm}\\
%	& & +3 a_{11}^2 (2 a_{11} a_{22}+a_{10} a_{21}) a_{32}-3 a_{11}^2 (a_{11} a_{21}+2 a_{10} a_{20}) a_{33}.
%\end{array}$$
%Therefore, the ordinary tangencies between $LD$ and $LPL$ have codimension 0 because such points are given by the submanifold $S$ of $J^3(2,1)$ defined by
%$$\left\{\begin{array}{lcc}
%a_{10}^2+a_{11}^2 & = & 1, \vspace{0.2cm}\\
%a_{11}^2 a_{22} + a_{11} a_{10} a_{21} + a_{10}^2 a_{20} & = & 0, \vspace{0.2cm}\\
%\Lambda_8 & \neq & 0.
%\end{array}\right.$$

Points where the $LD$ and the $MCNC$ are transversal are stable.
%\begin{theo}\label{teo:ld_lpl_codim}
%	Regular intersection points between $LD$ and $LPL$ with order of contact greater than 2 have codimension 1.
%\end{theo}
%
%\begin{proof}
%	\textcolor{blue}{Similar to the proof of Theorem \ref{teo:lpl_k_codim}.}
%	The regular points of $LD$ whose order of contact with $LPL$ is greater than 2 are given by
%	\begin{equation}\label{eq:cod_1_ld_lpl}
%	\left\{\begin{array}{lcc}
%	a_{10}^2+a_{11}^2 & = & 1, \vspace{0.2cm}\\
%	a_{11}^2 a_{22} + a_{11} a_{10} a_{21} + a_{10}^2 a_{20} & = & 0, \vspace{0.2cm}\\
%	\Lambda_8 & = & 0.
%	\end{array}\right.
%	\end{equation}
%	As the submanifold of $J^3(2,1)$ defined by (\ref{eq:cod_1_ld_lpl}) has codimension 3, the result follows.	
%\end{proof}
Let 
$$\mathcal{G}_6 = \{(M,p) \in \mathcal{G}:m(LD,LPL:p)>2\}.$$
Therefore, the $LD$ and the $MCNC$ of $M$ are tangent at $p$ for every $(M,p) \in \mathcal{G}_6$. Given $(M,p) \in \mathcal{G}_6$, take $\bx : U \to \R^3_1$ the local parameterization of $M$ as in (\ref{eq:param_monge_light}) with $p = \bx(0,0)$. As $p \in LPL$ and is not an umbilic point, $a_{20} = 0$ and $a_{21} \neq 0$. Applying a homothety, consider $a_{21} = 1$. As $m(LD,LPL:p)>2$, it follows that $a_{30} = -\frac{1}{3}$. %Therefore,
%$$j^5f=x+xy+a_{22} y^2-\frac{1}{3} x^3+a_{31} x^2y+a_{32} xy^2+a_{33} y^3+\sum_{i=4}^{5} \sum_{j=0}^{i} a_{ij} x^{i-j} y^j. $$

%It follows from the Implicit Function Theorem that $LD$ (in $U$) is parameterized by $\gamma(x) = (x,g(x))$ in a neighborhood of the origin, for some smooth function $ g$. We get
%$$j^4g=\frac{1}{2} x^2-(a_{22}+2 a_{31}+4 a_{40})x^3+ \frac{c_4}{8} x^4,$$
%with $c_4= 12 a_{22}^2+40 a_{31} a_{22}+64 a_{40} a_{22}+28 a_{31}^2-10 a_{32}-20 a_{41}+64 a_{31} a_{40}-40 a_{50}-1$.

%\begin{theo}\label{teo:ld_cmn_2}
%	Let $(M,p) \in \mathcal{G}_6$ be generic. The order of contact between $LD$ and $MCNC$ at $p$ is $2$.
%\end{theo}
%
%\begin{proof}
%	Take $h = \bar H \circ \gamma$. Note that $j^2 h = \Lambda_{12} x^2$, with
%	$\Lambda_{12} = 4 a_{22}+7 a_{31}+12 a_{40}$.
%	Thus, the order of contact between $LD$ and $MCNC$ at $p$ is $2$ if and only if $\Lambda_{12} \neq 0$.
%\end{proof}
%
%\begin{cor}\label{cor:ld_lpl_4}
%	Let $(M,p) \in \mathcal{G}_6$ be generic. The 
%\end{cor}

\begin{theo}\label{teo:flat_umb}
	Let $(M,p) \in \mathcal{G}_6$ is generic. Then 
	\begin{description}
		\item[(1)] $m(LD,LPL:p)=4$ and $m(LD,LPL:MCNC)=2$. There are two possible
		configurations for the three curves as shows in Figure \ref{fig:ld_lpl_cmn_def} middle figures.
		\item[(2)] For a generic 1-parameter family of surfaces $M_t$, with $M_0=M$, the configurations of the three curves deform as in Figure \ref{fig:ld_lpl_cmn_def}.
	\end{description}
	\begin{figure}[h!]
		\centering
		\includegraphics[width=0.9\textwidth]{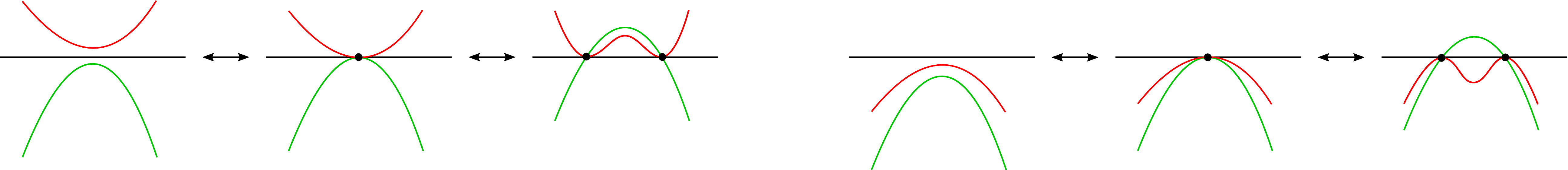}
		\caption{Deformations of the $LD$ (black), the $LPL$ (red) and the $MCNC$ (green) in a generic 1-parameter family of surfaces at points $p$ where $m(LD,LPL) = 4$.}
		\label{fig:ld_lpl_cmn_def}
	\end{figure}
\end{theo}

\begin{proof}
	\textbf{(1)} Similar to the proof of Theorem \ref{teo:lpl_k_mcn} (1). \vspace{0.2cm}
%	It is enough to observe that
%	$$\left(\tilde \delta \circ \gamma\right) (x) = \left(\Lambda_{12} x^2\right)^2+o(5) \quad {\rm and} \quad \left(\bar H \circ \gamma\right) (x) = \Lambda_{12} x^2+o(3),$$	
%	where $o(k)$ are terms with a degree $ \geq k$. Therefore, in Figure \ref{fig:ld_lpl_cmn}, left drawing occurs when $\Lambda_{12}>0$ and right drawing occurs when $\Lambda_{12}<0$.

	\noindent \textbf{(2)} Let $M_t \subset \R^3_1$ be a family of surfaces parameterized by $\bx_t : U \to M_t$, with $\bx_t(x,z) = (x,y,h(x,y,t))$	for some $h$ differentiable with $h(x,y,0) = f(x,y)$. Thus, the $LD_t$ is given by $\delta_t(x,y) = 0$, with $\delta_t(x,y) = h_x(x,y,t)^2+h_y(x,y,t)^2-1$. For $t=0$, we have that
	$$\frac{\partial \delta_0}{\partial y}(0,0) = 2 \neq 0.$$
	So, for $t$ small enough, the $LD_t$ is a regular curve and there is $y_t : (\R,0) \to (\R,0)$ such that $\delta_t(x,y_t(x)) = 0$, for every $x$ close to $0$.	If $g_t(x) = \bar H_t(x,y_t(x))$, then
	$$g_t \sim_\mathcal{R} \Lambda_{12} x^2+\left( 2 h_{yt}(0,0,0)-2 (2 a_{22}+ a_{31}) h_{xt}(0,0,0)+h_{xxt}(0,0,0) \right) t + \rho (t),$$
	with $j^1\rho = 0$ and $\Lambda_{12} = 4 a_{22}+7 a_{31}+12 a_{40}$. Therefore, the $MCNC_t$ and the $LD_t$ intersect at 0 or 2 points, depending on the sign of $t$. As $\bar K(0,0) \neq 0$, since $p$ does not belong to the $PC$, the $LPL_t$ does not intersect with the $PC_t$ for small values of $t$, that is, the only intersections of the $LPL_t$ and the $MCNC_t$ occur along the $LD_t$.	
\end{proof}

\subsection{Bifurcations at $\textbf{LD} \cap \textbf{PC}$}\label{sec:int_ld_K}

The intersections between the $LD$ and the $PC$ are the simplest cases dealt with in this papper. The transversal intersections between these curves have codimension 0. We consider here the case where the $LD$ and the $PC$ are tangent. So, let
$$\mathcal{G}_7 = \{(M,p) \in \mathcal{G}:m(LD,PC:p)>1\}.$$
%Given $(M,p) \in \mathcal{G}_7$, consider the parameterization $\bx : U \to \R^3_1$ of $M$ given as in (\ref{eq:param_monge_light}), with $p = \bx(0,0)$. % and
%%$$j^3f=x + a_{20} x^2 + a_{21} x y + a_{22} y^2 + a_{30} x^3 + a_{31} x^2 y + a_{32} x y^2 + a_{33} y^3.$$
%As $p \notin LPL$ (this case is considered in \S \ref{sec:umb_light}), so $a_{20} \neq 0$. Thus, $a_{22} = \frac{a_{21}^2}{4 a_{20}}$ because $p$ belongs to the $PC$. Since $m(LD,PC:p)>1$, it follows that
%$$a_{33} = \frac{4 a_{21} a_{32} a_{20}^2 - 2 a_{21}^2 a_{20} a_{31} + a_{21}^3 a_{30}}{8 a_{20}^3}.$$
%
%If $\delta = F^2-EG$, then $\delta_x(0,0) = 4 a_{20} \neq 0$. Thus, in a neighborhood of the origin, the $LD$ can be parameterized by $\gamma (y) = (g(y),y)$, with
%$$j^2g=-\frac{a_{21}}{2 a_{20}} y -\frac{3 a_{21}^2 a_{30}-4 a_{21} a_{20} a_{31}+4 a_{32} a_{20}^2}{8 a_{20}^3} y^2.$$

\begin{theo}
	Let $(M,p) \in \mathcal{G}_7$ be generic. Then
	\begin{description}
		\item[(1)] $m(LD,PC:p) = 2$.
		\item[(2)] For a generic 1-parameter family of surfaces $M_t$, with $M_0=M$, the configurations of the two curves deform as in Figure \ref{fig:def_ld_K}.
	\end{description}
	\begin{figure}[h!]
		\centering
		\includegraphics[width=0.5\textwidth]{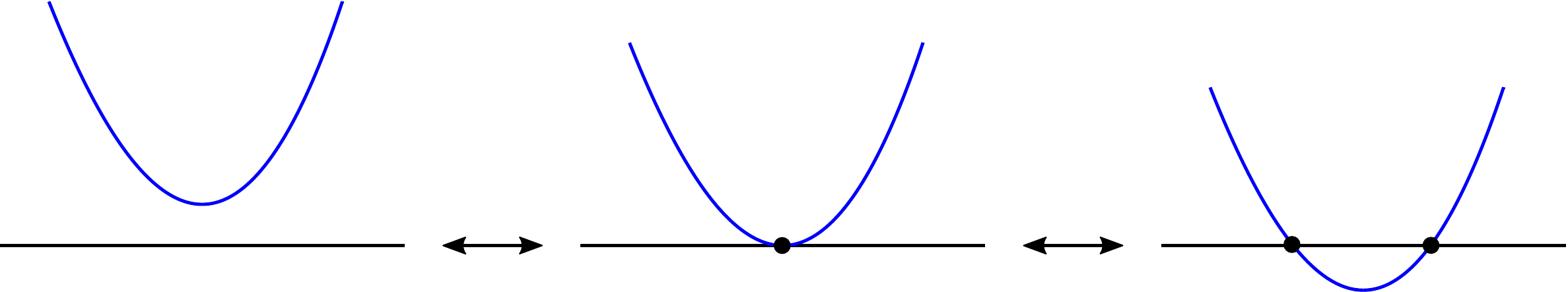}
		\caption{Deformations of the $LD$ (black) and the $PC$ (blue) in a generic family of surfaces.}
		\label{fig:def_ld_K}
	\end{figure}
\end{theo}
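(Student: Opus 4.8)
The plan is to follow the method of Theorems \ref{teo:lpl_k_mcn} and \ref{teo:flat_umb} and reduce everything to a one-variable function, namely the restriction of $\bar K$ to the $LD$. First I would take the parametrization $\bx(x,y)=(x,y,f(x,y))$ as in (\ref{eq:param_monge_light}), so that $p=\bx(0,0)\in LD$. Since $\bx_x\times\bx_y=(-f_x,-f_y,-1)$, a direct computation gives the two defining functions in a very simple form,
$$\delta=f_x^2+f_y^2-1,\qquad \bar K=\bar l\,\bar n-\bar m^2=f_{xx}f_{yy}-f_{xy}^2.$$
In particular $\nabla\delta(0,0)=(4a_{20},2a_{21})$ and $\bar K(0,0)=4a_{20}a_{22}-a_{21}^2$. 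Genericity lets me assume that the $LD$ is regular at $p$ (the singular case is treated in \S\ref{sec:umb_light}), that is $(a_{20},a_{21})\neq(0,0)$, so its tangent direction at the origin is $(a_{21},-2a_{20})$. The condition $p\in PC$ is $4a_{20}a_{22}-a_{21}^2=0$, and the hypothesis $m(LD,PC:p)>1$ amounts to the single equation $a_{21}\bar K_x(0,0)-2a_{20}\bar K_y(0,0)=0$, which says that the derivative of $\bar K$ along the $LD$ vanishes at $p$. These three independent conditions show, in the sense of \S\ref{sec:codimension}, that $\mathcal{G}_7$ has codimension $1$.

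For item (1), since the $LD$ is regular I would use the Implicit Function Theorem to parametrize it by a germ $\gamma:(\R,0)\to(\R^2,0)$ with $\gamma(0)=0$ and $\gamma'(0)$ proportional to $(a_{21},-2a_{20})$, and then expand
$$(\bar K\circ\gamma)(s)=\Lambda\,s^2+O(3).$$
The linear term is absent precisely because of the tangency condition defining $\mathcal{G}_7$, so $m(LD,PC:p)=\mathrm{ord}_0(\bar K\circ\gamma)\geq2$ is automatic. The key step is to check that the quadratic coefficient $\Lambda$, an explicit polynomial in the $a_{ij}$ up to order $3$, is nonzero for a generic $(M,p)\in\mathcal{G}_7$; its vanishing $\Lambda=0$ is one further independent condition, so it raises the codimension to $2$ and does not occur in a generic $1$-parameter family. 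This gives $m(LD,PC:p)=2$.

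For item (2), I would take a generic deformation $\bx_t(x,y)=(x,y,h(x,y,t))$ with $h(x,y,0)=f(x,y)$, so that $LD_t=\{h_x^2+h_y^2-1=0\}$ is regular for $t$ small and $PC_t=\{h_{xx}h_{yy}-h_{xy}^2=0\}$. Parametrizing $LD_t$ by $\gamma_t$ and setting $g_t(s)=\bar K_t(\gamma_t(s))$, the computation of item (1) shows that $g_0$ has a nondegenerate zero of order $2$, and that the family unfolds it as
$$g_t(s)\sim_{\mathcal{R}}\pm\big(s^2+\mu(t)\big),$$
where $\mu$ is a germ with $\mu(0)=0$ and $\mu'(0)\neq0$ under an open and dense condition on the first $t$-derivatives of $h$ at the origin. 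Hence $g_t=0$ has two simple roots for $t$ on one side of $0$ and none on the other, i.e. $LD_t$ and $PC_t$ meet transversally at two points, respectively are disjoint, near $p$. This is exactly the transition of Figure \ref{fig:def_ld_K}.

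The main obstacle is purely computational: writing out the quadratic coefficient $\Lambda$ and confirming its genericity, and then isolating the coefficient $\mu'(0)$ to verify that the unfolding is versal. Conceptually there is nothing delicate, because the tangency of two regular curves is an ordinary $A_1$ contact whose generic $1$-parameter unfolding is the standard transition of a parabola crossing a line; this is why, as noted before the statement, $LD\cap PC$ is the simplest of the situations treated in the paper.
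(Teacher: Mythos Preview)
Your proposal is correct and follows precisely the route the paper indicates: the paper's own proof simply reads ``(1) Similar to the proof of Theorem \ref{teo:lpl_k_mcn} (1). (2) Similar to the proof of Theorem \ref{teo:flat_umb} (2),'' and you have faithfully carried out that template, parametrizing the $LD$ by $\gamma$, expanding $\bar K\circ\gamma$ to second order for (1), and then unfolding the restriction $g_t=\bar K_t\circ\gamma_t$ as $\pm(s^2+\mu(t))$ for (2). Your explicit identification $\delta=f_x^2+f_y^2-1$ and $\bar K=f_{xx}f_{yy}-f_{xy}^2$ in the Monge chart (\ref{eq:param_monge_light}) is accurate and makes the genericity checks transparent.
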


\begin{proof}
	\textbf{(1)} Similar to the proof of Theorem \ref{teo:lpl_k_mcn} (1). \textbf{(2)} Similar to the proof of the Theorem \ref{teo:flat_umb} (2).
\end{proof}

\subsection{Bifurcations at a lightlike umbilic point}\label{sec:umb_light}

Lightlike umbilic points are singularities of the $LD$ \cite{CaratheodoryR31}. We prove below that the $LPL$ (extended to the $LD$ as the discriminant of equation (\ref{eq:principalLD})) passes through lightlike umbilic points and is also singular at that point.

%The $LPL$ is also singular at spacelike and timelike umbilic points. However, there are singularities of the $LPL$ outside the $LD$ that are not umbilic points. Therefore, the reverse of the previous theorem is not true in these cases. 
%We can also characterize the lighlike umbilic points using the $PC$ and the $MCNC$.

\begin{prop}
	Let $M$ be a smooth surface in $\R^3_1$ and $p \in LD$.
	\begin{description}
		\item[(1)] $p$ is a lightlike umbilic point if and only if the $LPL$ is singular at $p$
		\item[(2)] If $p$ is a lightlike umbilic point, then $p$ belongs to the $PC$.
		\item[(3)] If $p$ belongs to the $LPL$ and the $PC$, then $p$ is an umbilic point.
		\item[(4)] $p$ is a lightlike umbilic point if and only if $p$ belongs to the $PC$ and the $MCNC$.		
	\end{description}
\end{prop}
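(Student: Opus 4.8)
The plan is to run all four items through the single parametrization $\bx$ provided by Theorem \ref{the:param_lightlike}, for which $E=F=0$ holds along the whole $LD$. First I would record that on such a chart the induced metric has matrix $\mathrm{diag}(0,G)$ at points of the $LD$; since the lightlike direction $\mathbb{R}\bx_u$ is the \emph{unique} (double) solution of (\ref{eq:LightBDE}), the metric has rank exactly one there, so $G(p)\neq 0$. Writing the coefficients of the extended principal BDE (\ref{eq:principalLD}) as $\mathcal{A}=G\bar m-F\bar n$, $\mathcal{B}=G\bar l-E\bar n$, $\mathcal{C}=F\bar l-E\bar m$, their restriction to the $LD$ collapses to $\mathcal{A}=G\bar m$, $\mathcal{B}=G\bar l$, $\mathcal{C}=0$. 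Consequently, on the $LD$ one has $\tilde\delta=\mathcal{B}^2-4\mathcal{A}\mathcal{C}=G^2\bar l^{\,2}$ and $\bar H=\bar l\,G$, while $\bar K=\bar l\bar n-\bar m^2$ throughout $U$. In this chart the lightlike umbilic condition $\mathcal{A}=\mathcal{B}=\mathcal{C}=0$ is therefore equivalent to $\bar l(p)=\bar m(p)=0$, and each membership becomes algebraic in $\bar l,\bar m,\bar n$.

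With these identities, items (2)--(4) reduce to one-line verifications. For (2), $\bar l=\bar m=0$ gives $\bar K(p)=-\bar m^2=0$, so $p\in PC$. For (3), $p\in LPL$ forces $G^2\bar l^{\,2}=0$, hence $\bar l(p)=0$, and then $p\in PC$ forces $\bar K(p)=-\bar m^2=0$, hence $\bar m(p)=0$; thus $\mathcal{A},\mathcal{B},\mathcal{C}$ all vanish and $p$ is a (lightlike) umbilic point. For (4), the forward implication is (2) together with $\bar H(p)=\bar l(p)G(p)=0$, while the converse combines $\bar H(p)=0$ (giving $\bar l(p)=0$, since $G\neq 0$) with $\bar K(p)=0$ (then giving $\bar m(p)=0$), which is exactly lightlike umbilicity.

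The substantive item is (1), where by ``the $LPL$ is singular at $p$'' I mean $\tilde\delta(p)=0$ and $d\tilde\delta(p)=0$. The forward direction is chart-independent: differentiating the discriminant gives $d\tilde\delta=2\mathcal{B}\,d\mathcal{B}-4\mathcal{C}\,d\mathcal{A}-4\mathcal{A}\,d\mathcal{C}$, which vanishes at any point with $\mathcal{A}=\mathcal{B}=\mathcal{C}=0$; so a lightlike umbilic point is a singular point of the $LPL$. For the converse I would compute $d\tilde\delta(p)$ in the above chart. Since $\mathcal{C}\equiv 0$ and $\bar l(p)=0$ (from $\tilde\delta(p)=0$), only the term $-4\mathcal{A}\,d\mathcal{C}$ survives, and a short computation using $d\mathcal{C}(p)=-\bar m(p)\,dE(p)$ together with $\mathcal{A}(p)=G\bar m(p)$ yields the clean formula $d\tilde\delta(p)=4\,G\,\bar m(p)^2\,dE(p)$. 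Hence $LPL$ singular at $p$ forces $\bar m(p)^2\,dE(p)=0$.

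The hard part is closing the converse from $\bar m(p)^2\,dE(p)=0$: if $\bar m(p)\neq 0$ then $dE(p)=0$, and since $\delta=F^2-EG$ gives $d\delta(p)=-G\,dE(p)$ along the $LD$, this would make the $LD$ singular at $p$; but by \cite{CaratheodoryR31} (quoted in \S\ref{sec:prel}) an $LD$ singularity is exactly a lightlike umbilic point, forcing $\bar m(p)=0$, a contradiction. Therefore $\bar m(p)=0$, and with $\bar l(p)=0$ we conclude that $p$ is lightlike umbilic. The only delicate points to check carefully are that $G(p)\neq 0$ (so every division and cancellation above is legitimate) and that the equivalence ``$LD$ singular $\Leftrightarrow$ lightlike umbilic'' may be invoked, both of which are available from the preliminaries.
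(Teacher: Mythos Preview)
Your proposal is correct and follows the same overall strategy as the paper: work in the chart of Theorem~\ref{the:param_lightlike} where $E=F=0$ on the $LD$, reduce everything to identities in $\bar l,\bar m,\bar n,G$, and read off (2)--(4) directly. Your proofs of (2)--(4) are essentially identical to the paper's (your argument for (4) is in fact cleaner than the paper's appeal to Theorem~\ref{prop:inter_tripla}, which strictly speaking concerns points off the $LD$).

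The one genuine difference is in the converse of (1). The paper records the pointwise identity $E_u(q)=\lambda\,\bar l(q)$, $E_v(q)=\lambda\,\bar m(q)$ for some $\lambda\neq 0$ (geometrically, $\bx_u\times\bx_v$ is parallel to the lightlike direction $\bx_u$ on the $LD$), and plugs this directly into the partial derivatives of $\tilde\delta$ to see that $\tilde\delta_u=\tilde\delta_v=0$ forces $\bar l=\bar m=0$. You instead derive the clean formula $d\tilde\delta(p)=4G\,\bar m^2\,dE(p)$ and then outsource the step ``$dE(p)=0\Rightarrow$ lightlike umbilic'' to the cited result of \cite{CaratheodoryR31}. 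Both routes are valid; the paper's is self-contained, while yours trades the explicit identity for a black-box citation---which is fine here since that equivalence is already invoked in \S\ref{sec:prel}. In fact, in this chart Tari's equivalence \emph{is} precisely the identity $dE(p)\propto(\bar l,\bar m)$ (since $d\delta(p)=-G\,dE(p)$), so the two arguments are the same computation viewed from opposite ends. One small wording issue: you write ``$\mathcal{C}\equiv 0$'', but $\mathcal{C}=F\bar l-E\bar m$ vanishes only along the $LD$, not on all of $U$; your computation only uses $\mathcal{C}(p)=0$, so this is harmless, but worth tightening.
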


\begin{proof}
	Consider $\bx : U \to \R^3_1$ the local parameterization of $M$ given by the Theorem \ref{the:param_lightlike} with $p = \bx(q)$. Thus, $E(q) = F(q) = 0$, $E_u(q) = \lambda \bar l(q)$ and $E_v(q) = \lambda \bar m(q)$, for some $\lambda \neq 0$. The curvature lines BDE is $\bar m(q) dv^2+\bar l(q) dv du = 0$.\vspace{0.3cm}
	
	\noindent \textbf{(1)} It follows that
	$$\begin{array}{lcl}
	\tilde \delta(q) & = & G(q) \bar l(q),\vspace{0.2cm}\\
	\tilde \delta_u(q) & = & 2 G(q) \bar l(q) \left(-2 F_u(q) \bar m(q)+G_u(q) \bar l(q)+G(q) \bar l_u(q)-\lambda  \bar l(q) \bar n(q)+2 \lambda  \bar m(q)^2\right),\vspace{0.2cm}\\
	\tilde \delta_v(q) & = & 2 G(q) \left(\bar l(q) \left(G(q)  \bar l_v(q)-\bar m(q) (2 F_v(q) +\lambda \bar n(q))+G_v(q) \bar l(q)\right)+2 \lambda \bar m(q)^3\right).
	\end{array}$$
	Since $G(q) \neq 0$, $p$ is a singular point of the $LPL$ if and only if $\bar l(q) = \bar m(q) = 0$.\vspace{0.3cm}
	
	\noindent \textbf{(2)} If $p$ is a umbilic point, then $\bar l (q) = \bar m(q) = 0$. Therefore, $\bar K(q) = 0$ and $p$ belongs to the $PC$.\vspace{0.3cm}
	
	\noindent \textbf{(3)} If $p \in LPL$, then $\bar l(q) = 0$. Thus, $\bar K(q) = \bar m(q)^2$. As $p$ belongs to the $PC$, it follows that $\bar K(q) = 0$. Therefore, $\bar m (q) = 0$ and $p$ is a umbilic point.\vspace{0.3cm}
	
	\noindent \textbf{(4)} Follows from items 2, 3 and the Theorem \ref{prop:inter_tripla}.
\end{proof}
%Let $M$ be a smooth surface in $\R^3_1$ and $p \in M$ a lightlike umbilic point. So $(0,0,1) \notin T_pM$ because $T_pM$ is a lightlike plane and $(0,0,1)$ is a timelike vector. Therefore, throughout this subsection, we consider the Monge-Taylor map with $\vec w=(0,0,1)$.

%\begin{theo}
%	Lightlike umbilic points have codimension 1.
%\end{theo}
%
%\begin{proof}
%	\textcolor{blue}{Similar to the proof of Theorem \ref{teo:lpl_k_codim}.}
%	Let $M \subset \R^3_1$ be a surface, $p$ a point at $M$ and $\bx : U \to \R^3_1$ a local parameterization of $M$, with $p = \bx(0,0)$, $\bx (x,y) = (x,y,f(x,y))$ and
%	$$j^2f = a_{00}+a_{10} x+a_{11} y+a_{20} x^2+a_{21} xy+a_{22} y^2.$$
%	Thus, $p$ is a singularity of $LD$ if and only if
%	\begin{equation}\label{eq:condicaoluz}
%	\left\{\begin{array}{lcc}
%	a_{10}^2+a_{11}^2 & = & 1, \\
%	a_{11} a_{21}+2 a_{10} a_{20} & = & 0, \\
%	a_{10} a_{21}+2 a_{11} a_{22} & = & 0.
%	\end{array} \right.
%	\end{equation}
%	
%	\noindent If $S$ is the submanifold of $J^2(2,1)$ defined by (\ref{eq:condicaoluz}), then $S$ has codimension 3 and lightlike umbilic points have codimension 1.
%\end{proof}

Let 
$$\mathcal{G}_8 = \{(M,p)\in \mathcal{G}:p {\rm \,\,is \,\,a \,\,lightlike \,\,umbilic \,\,point}\}.$$
For a $(M,p) \in \mathcal{G}_8$, we use the local parameterization $\bx: U \to \R^3_1$ of $M$ given as in (\ref{eq:param_monge_light}) with $p = \bx(0,0)$ and $j^2f = x+a_{22} y^2$.

\begin{theo}\label{teo:umb_lightlike}
	Let $(M,p) \in \mathcal{G}_8$ is generic. Then
	\begin{description}
		\item[(1)] The $LD$ and the $LPL$ have $A_1^\pm$ and $A_3^\pm$ singularities at $p$, respectively, and the $PC$ and the $MCNC$ are regular curves. There are six generic configurations as shown in Figure \ref{fig:umb_tipo_luz_desd} middle figures.
		\item[(2)]  For a generic 1-parameter family of surfaces $M_t$ , with $M_0=M$, the configurations of the $LD$, $LPL$, $PC$ and $MCNC$ deform as in Figure \ref{fig:umb_tipo_luz_desd}
	\end{description}
	\begin{figure}[h!]
		\centering
		\begin{subfigure}[b]{0.4\textwidth}
			\includegraphics[width=\textwidth]{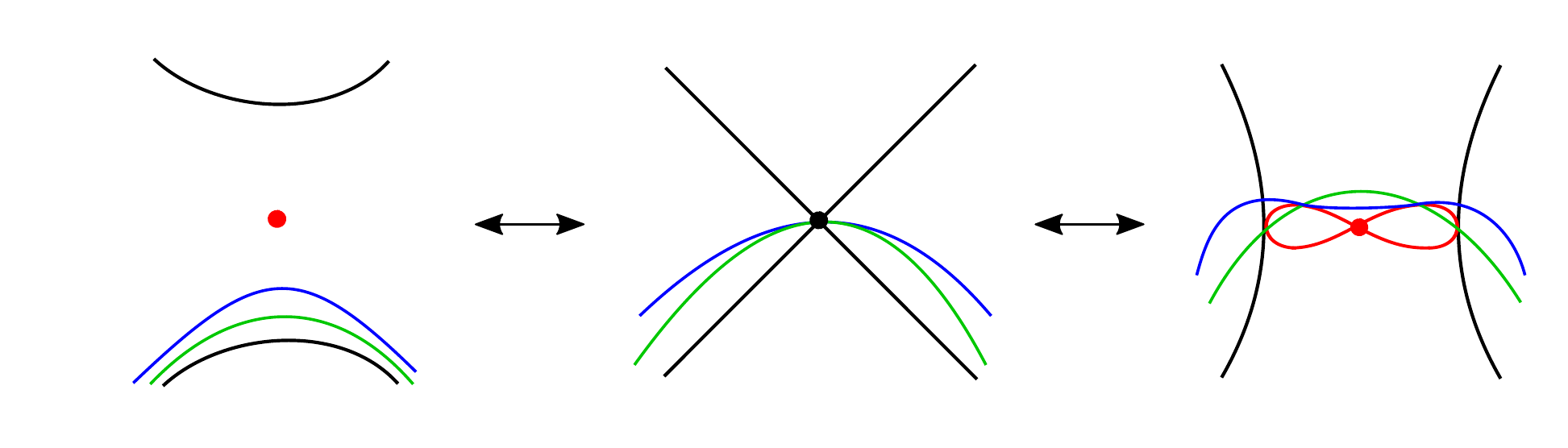}
			%\caption{Case $\Lambda_{13} < 0$, $a_{30} > 0$ and $\Lambda_6 < 0$.}
		\end{subfigure}
		\qquad \quad
		\begin{subfigure}[b]{0.4\textwidth}
			\includegraphics[width=\textwidth]{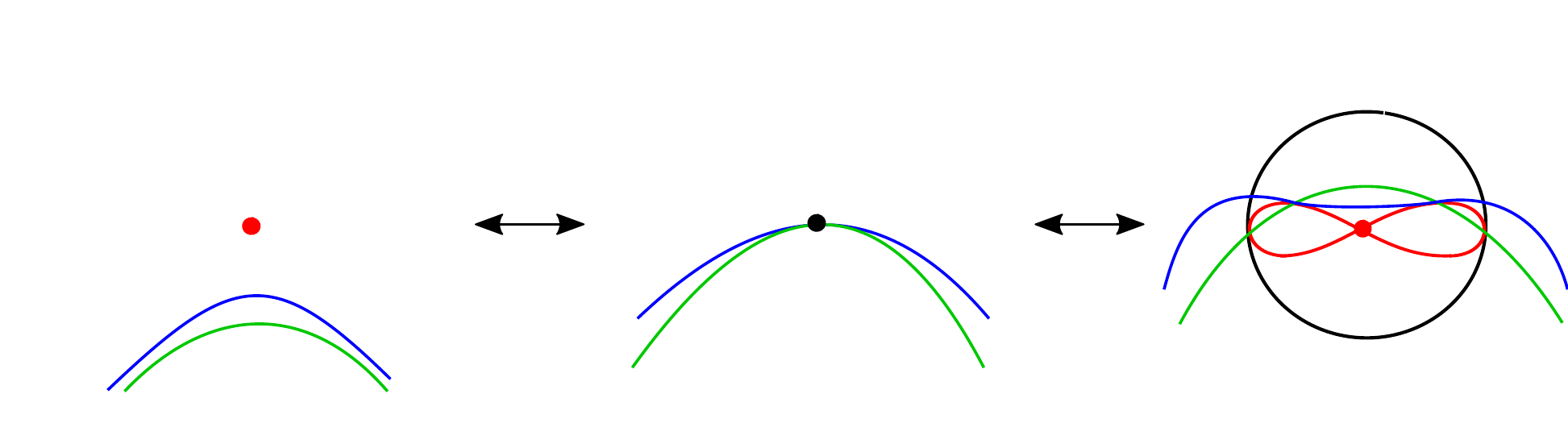}
			%\caption{Case $\Lambda_{13} > 0$, $a_{30} < 0$ and $\Lambda_6 > 0$.}
		\end{subfigure}
		
		\begin{subfigure}[b]{0.4\textwidth}
			\includegraphics[width=\textwidth]{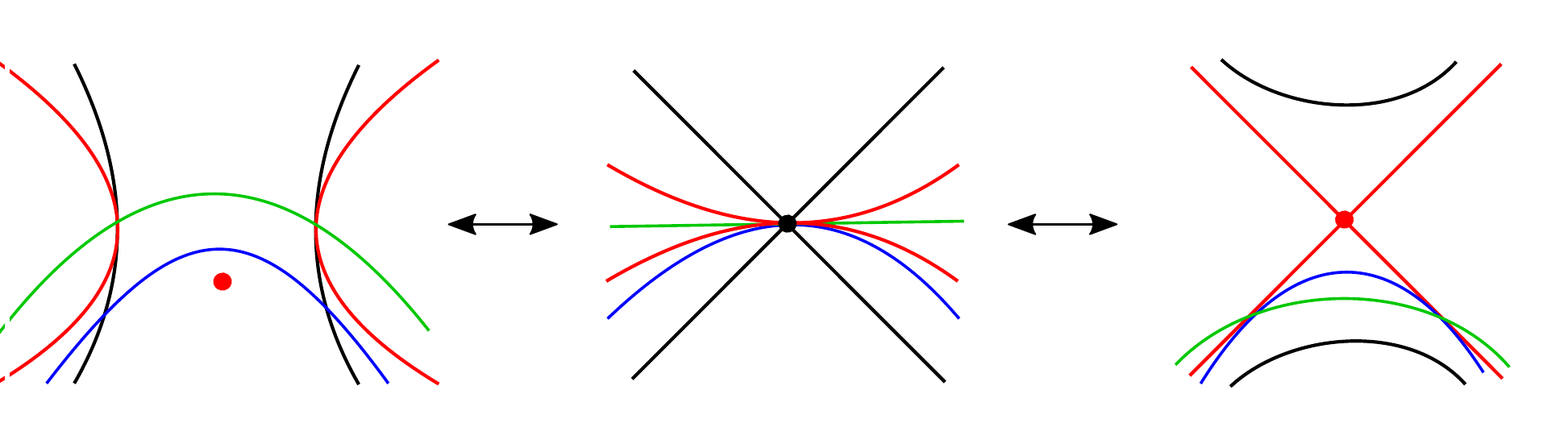}
			%\caption{Case $\Lambda_{13} < 0$, $a_{30} < 0$ and $\Lambda_6 < 0$.}
		\end{subfigure}
		\qquad \quad
		\begin{subfigure}[b]{0.4\textwidth}
			\includegraphics[width=\textwidth]{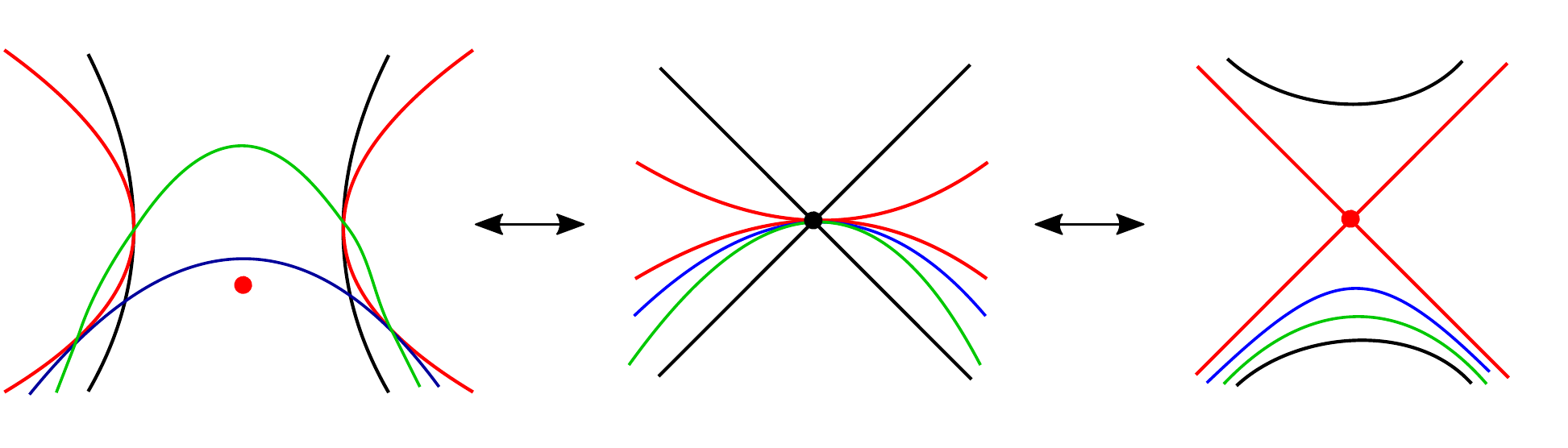}
			%\caption{Case $\Lambda_{13} < 0$, $a_{30} < 0$ and $\Lambda_6 > 0$.}
		\end{subfigure}
		
		\begin{subfigure}[b]{0.4\textwidth}
			\includegraphics[width=\textwidth]{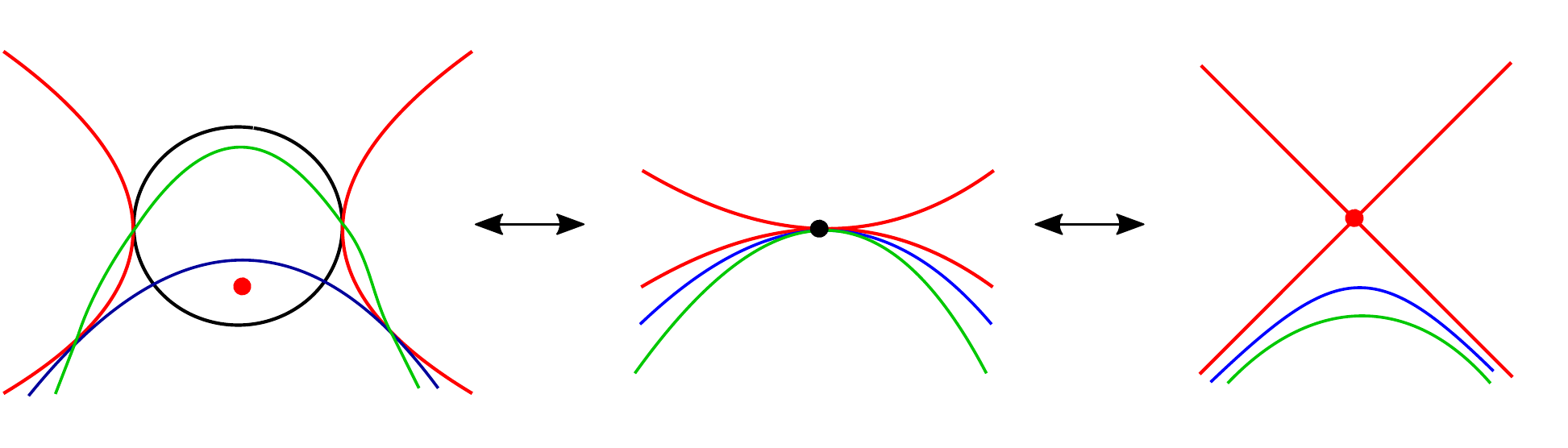}
			%\caption{Case $\Lambda_{13} > 0$, $a_{30} > 0$ and $\Lambda_6 < 0$.}
		\end{subfigure}
		\qquad \quad
		\begin{subfigure}[b]{0.4\textwidth}
			\includegraphics[width=\textwidth]{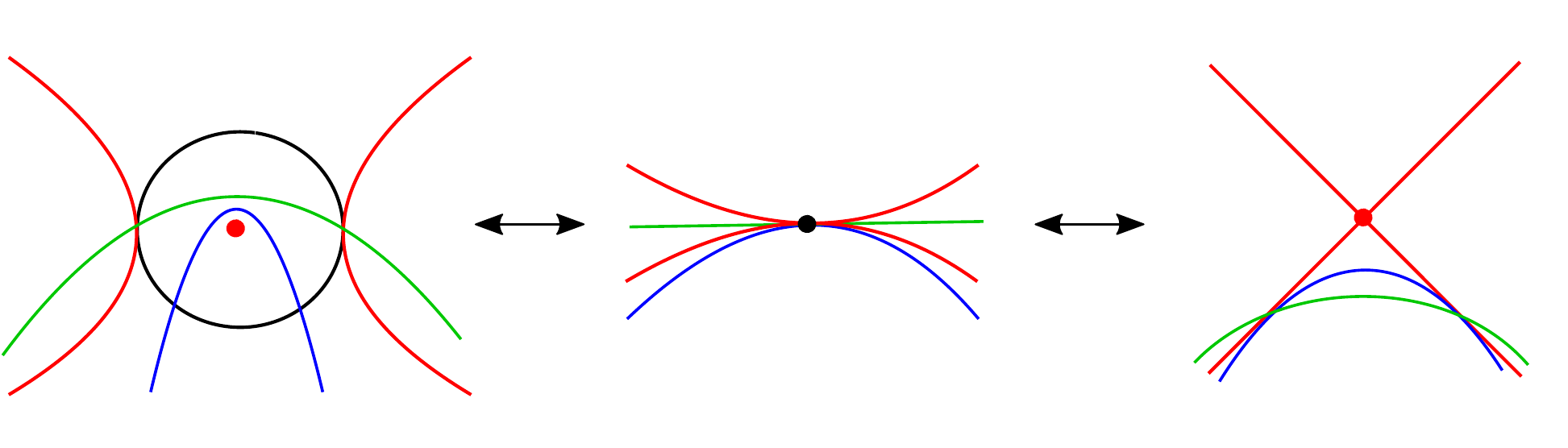}
			%\caption{Case $\Lambda_{13} > 0$, $a_{30} > 0$ and $\Lambda_6 > 0$.}
		\end{subfigure}
		\caption{Deformation of the curves $LD$ (black), $LPL$ (red), $PC$ (blue) and $MCNC$ (green) in generic families of surfaces.}
		\label{fig:umb_tipo_luz_desd}
	\end{figure}
\end{theo}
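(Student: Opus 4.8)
The plan is to work throughout in the Monge normal form (\ref{eq:param_monge_light}) with $j^2f = x + a_{22}y^2$, so that $\bx(x,y)=(x,y,f(x,y))$ and all four defining functions are computed explicitly from $f$: from $\bx_x=(1,0,f_x)$, $\bx_y=(0,1,f_y)$ one gets $\delta=f_x^2+f_y^2-1$ for the $LD$, and $\bar l=f_{xx}$, $\bar m=f_{xy}$, $\bar n=f_{yy}$, whence $\bar K=f_{xx}f_{yy}-f_{xy}^2$ for the $PC$, $\bar H=\bar l G-2\bar m F+\bar n E$ for the $MCNC$, and $\tilde\delta$ for the $LPL$. The single observation that organises the whole argument is the algebraic identity
\[
\tilde\delta=\bar H^2+4\,\delta\,\bar K\qquad(\delta=F^2-EG),
\]
valid on any surface and checked by expanding both sides in $E,F,G,\bar l,\bar m,\bar n$. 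At the origin $E=F=0$, $G=1$, $\bar l=\bar m=0$, $\bar n=2a_{22}$, so all four functions vanish, recovering that $p\in PC\cap MCNC$ is a lightlike umbilic as in the preceding Proposition.

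For the singularity types in (1), a direct computation gives $j^2\delta=6a_{30}x^2+4a_{31}xy+(2a_{32}+4a_{22}^2)y^2$, so the $LD$ has an $A_1^\pm$ singularity whenever this quadratic form is nondegenerate, the sign being its signature. Moreover $\nabla\bar K(0)=2a_{22}\,\nabla\bar H(0)=2a_{22}(6a_{30},2a_{31})$, so for $a_{22}\neq0$ and $(a_{30},a_{31})\neq(0,0)$ both $PC$ and $MCNC$ are regular and, notably, \emph{mutually tangent} at $p$. For the $LPL$ I would exploit the identity: writing $\bar K=2a_{22}\bar H+\bar K_2$ with $\bar K_2$ of order $\ge2$ and completing the square yields $\tilde\delta=(\bar H+4a_{22}\delta)^2+\delta\Psi$ with $\Psi$ of order $\ge2$. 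Taking the regular function $\bar H+4a_{22}\delta$ as one coordinate and splitting it off, the residual singularity is governed by $\delta\Psi$ restricted to a regular transversal; since $\delta$ and $\Psi$ each restrict to order $2$, this has order $4$ (with no intermediate term), so the $LPL$ is generically $A_3^\pm$, the sign being that of the leading $y^4$-coefficient. The genericity conditions are thus nonvanishing of three polynomials in the $a_{ij}$: nondegeneracy of $j^2\delta$, nonvanishing of the $y^4$-coefficient, and $(a_{30},a_{31})\neq 0$.

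The configurations also follow from the identity: on the $LD$ and on the $PC$ one has $\tilde\delta=\bar H^2\ge0$, so the $LPL$ meets each only along $MCNC$, while on $MCNC$ one has $\tilde\delta=4\delta\bar K$, so $LPL\cap MCNC=(LD\cap MCNC)\cup(PC\cap MCNC)$ (consistent with Theorem \ref{teo:ld_lpl_H}). These sign and incidence constraints, combined with the two signs of the $A_1$ of the $LD$, the two signs of the $A_3$ of the $LPL$, and the fixed tangency of $PC$ and $MCNC$, cut the possibilities down to the six listed cases, each realised by an explicit jet. For (2) the decisive point is that the identity persists for every member of a family: with $\bx_t(x,y)=(x,y,h(x,y,t))$, $h(\cdot,\cdot,0)=f$, one has $\tilde\delta_t=\bar H_t^2+4\delta_t\bar K_t$ for all $t$. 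Hence the deformation of all four curves is slaved to the three functions $\delta_t,\bar H_t,\bar K_t$: $LD_t=\{\delta_t=0\}$, the regular curves $MCNC_t=\{\bar H_t=0\}$ and $PC_t=\{\bar K_t=0\}$, and $LPL_t=\{\bar H_t^2+4\delta_t\bar K_t=0\}$, the last pinned to pass through $LD_t\cap MCNC_t$ and $PC_t\cap MCNC_t$ and forced to satisfy $\tilde\delta_t\ge0$ along $LD_t$ and along $PC_t$. I would then check that a generic family makes $\delta_t$ an $\mathcal R$-versal unfolding of its $A_1$ (one open condition on the first $t$-derivatives of $h$) and makes $PC_t$, $MCNC_t$ separate as $t$ passes through $0$ (an open condition comparing the $t$-velocities of $\bar K_t$ and $\bar H_t$); feeding these two motions into the identity reproduces the six pictures of Figure \ref{fig:umb_tipo_luz_desd}.

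The main obstacle is twofold. In (1) it is the fourth-order computation isolating the $y^4$-coefficient and fixing the $A_3$ sign, together with the case analysis confirming exactly six configurations. In (2) it is conceptual: the $A_3$ singularity of the $LPL$ has $\mathcal R$-codimension $3$, so a one-parameter family cannot versally unfold it as an abstract function. The resolution is precisely the identity, which shows $\tilde\delta_t$ is never a generic deformation but is determined by the versal $A_1$ deformation of $\delta_t$ and the rigid separation of the two regular curves $PC_t$ and $MCNC_t$; verifying that this constrained family realises all, and only, the claimed bifurcations is the delicate part.
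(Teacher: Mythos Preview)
Your algebraic identity $\tilde\delta=\bar H^{2}+4\delta\bar K$ is correct and is genuinely new relative to the paper: the paper never writes it down and instead proceeds by direct jet computation. In part (1) the paper simply computes $j^{4}\tilde\delta\sim_{\mathcal R^{(4)}}x^{2}-\frac{32\Lambda_{13}^{3}}{27a_{30}^{3}}y^{4}$ and then compares, via the Implicit Function Theorem, the second derivatives $x_{K}''(0)$, $x_{H}''(0)$ of the graphs of $PC$ and $MCNC$ against the coefficient $\sqrt{32\Lambda_{13}^{3}/27a_{30}^{3}}$ of the two $LPL$ branches. Your splitting--lemma route to the $A_{3}$ via $\tilde\delta=(\bar H+4a_{22}\delta)^{2}+\delta\Psi$ is cleaner conceptually, and the incidence constraints you extract from the identity (that $\tilde\delta=\bar H^{2}\ge0$ along both $LD$ and $PC$, and $\tilde\delta=4\delta\bar K$ along $MCNC$) are exactly the right structural facts. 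What each buys: the paper's computation gives the explicit $A_{3}$ coefficient and hence a concrete genericity condition $\Lambda_{13}\ne0$; your identity gives the sign and tangency relations for free but leaves the fourth--order coefficient implicit.

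There is, however, a real gap in your case count for (1). You argue ``two signs of the $A_{1}$ of $LD$ times two signs of the $A_{3}$ of $LPL$'', which yields four, not six. The paper's analysis shows that in the $A_{3}^{-}$ case there is a further binary invariant: whether the $MCNC$ lies between the two $LPL$ branches or outside them, governed by the sign of $(3a_{30}a_{32}-a_{31}^{2})/a_{30}$. The paper then proves that the $PC$ is \emph{never} between the $LPL$ branches and that, when the $MCNC$ is outside, it is always sandwiched between $PC$ and $LPL$. So the six cases are $2$ (for $A_{3}^{+}$) plus $2\times2$ (for $A_{3}^{-}$). Your identity does constrain the picture, but it does not by itself detect this extra invariant; you would still need a second--order comparison along the common tangent $3a_{30}x+a_{31}y=0$.

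For (2) the paper's method is quite different from yours and is complete where yours is a sketch. The paper shows the induced unfolding of the $A_{3}$ has tangent direction $(\ast,0,0)$ in the $A_{3}$ parameter space, observes that $\tilde\delta_{t}$ must retain a singularity for every $t$ (because the lightlike umbilic, being an $A_{1}$ of $\delta$, is stable), and hence that the parameter curve lies on the swallowtail; an Appendix lemma then forces any \emph{regular} curve on the swallowtail through the vertex to pass from stratum $(II)$ to stratum $(VI)$, which pins down the $LPL$ bifurcation. The intersections are tracked separately with the Monge--Taylor map. Your plan to read everything off $\tilde\delta_{t}=\bar H_{t}^{2}+4\delta_{t}\bar K_{t}$ is attractive, and the persistence of the umbilic is visible from the identity (at a zero of $\bar H_{t}$ and $\delta_{t}$ with $\bar K_{t}\ne0$ one has $\nabla\tilde\delta_{t}=4\bar K_{t}\nabla\delta_{t}$, so singularities of $LPL_{t}$ there are exactly singularities of $LD_{t}$), but you still owe the argument that only the two specific strata are visited---essentially the content of the paper's Appendix---and your proposal stops short of supplying it.
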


\begin{proof}
	\textbf{(1)} Let $\delta = F^2-EG$, then $j^2\delta = 6 a_{30} x^2+4 a_{31} xy+(4 a_{22}^2+2 a_{32}) y^2$ and the $LD$ is given by $ \delta = 0$.	Thus, the $LD$ has a Morse singularity at $p$ if and only if $\Lambda_{13} = 6 a_{22}^2 a_{30}+3 a_{30} a_{32}-a_{31} ^2 \neq 0$. %The singularity at $p$ is $A_1^+$ if $\Lambda_{13} > 0$ and $A_1^-$ when $\Lambda_{13} < 0$, both cases are possible.
	
	On the other hand, %since
	%$$j^2\tilde{\delta}=(3 a_{30} x+a_{31} y)^2,$$
	%$LPL$ does not have a Morse singularity at $p$. Generically, $a_{30}$ and $a_{31}$ are not both equal to zero, that is, $\tilde{\delta}$ has corank $1$. 
	if $a_{30} \neq 0$, we have	$j^4\tilde{\delta} \sim_{\mathcal{R}^{(4)}} x^2-\frac{32 \Lambda_{13}^3}{27 a_{30}^3}y^4$. Therefore, the $LPL$ has a singularity $A_3^\pm$ at $p$. For the $PC$ and the $MCNC$, it suffices to note that $j^1 \bar{K} = 4a_{22}(3 a_{30} x+a_{31} y)$ and $j^1 \bar{H} = 2 (3 a_{30} x+a_{31} y)$. Suppose $\Lambda_{13} \neq 0$, $a_{30} \neq 0$ and $a_{22} \neq 0$. Note that the $LPL$, the $PC$ and the $MCNC$ are tangent at $p$ and the tangent line is $3 a_{30} x+a_{31} y = 0$.
	
	If the $LPL$ has a $A_3^+$ singularity, then we have two possibilities, one for each singularity of the $LD$ ($A_1^+$ or $A_1^-$).
	
	When the $LPL$ has a $A_3^-$ singularity, after coordinate changes it follows that 	\begin{equation}\label{eq:lpljato4}
	\tilde{\delta} \sim_{\mathcal{R}} x^2-\frac{32 \Lambda_{13}^3}{27 a_{30}^3}y^4 = \left(x - \sqrt{\frac{32 \Lambda_{13}^3}{27 a_{30}^3}} y^2\right) \left(x + \sqrt{\frac{32 \Lambda_{13}^3}{27 a_{30}^3}} y^2\right).
	\end{equation}
	Therefore, the $LPL$ is locally formed by the graphs of the functions $x = \pm \sqrt{\frac{32 \Lambda_{13}^3}{27 a_{30}^3}} y^2$.	Applying the same coordinate changes to $\bar K$ and $\bar H$, then
	$$\begin{array}{l}
	j^2\bar K = -2 a_{22} x+\frac{\Lambda_{13}}{9 a_{30}^2} x^2-\frac{2\left(-12 a_{22}^2 a_{30} a_{31}+2 a_{31}^3-9 a_{32} a_{30} a_{31}+27 a_{33} a_{30}^2\right)}{9 a_{30}^2} xy+\frac{4 \Lambda_{13} \left(6 a_{22} ^2 a_{30}+\Lambda_{13}\right)}{9 a_{30}^2 }y^2, \vspace{0.2 cm}\\
	j^2\bar H = x - \frac{2 a_{22}}{3 a_{30}} x^2 - \frac{8 a_{22} \Lambda_{13}}{3 a_{30}} y^2.
	\end{array}$$
	From the Implicit Function Theorem, there are differentiable functions $x_K, x_H: (\R,0) \to (\R,0)$ such that $\bar{K} (x_K(y),y) = 0$ and $\bar{H} (x_H(y),y) = 0$, for all $y$ close to $0$, with $x_K''(0)=  \frac{4 \Lambda_{13} \left(6 a_{22}^2 a_{30}+\Lambda_{13}\right)}{9 a_{22} a_{30}^2 }$ and $x_H''(0) = \frac{16 a_{22} \Lambda_{13}}{3 a_{30}}$.
	%Thus, the $PC$ and the $MCNC$ are locally given by $x=\frac{1}{2}x''_K(0) y^2+O(3)$ and $x=\frac{1}{2}x''_H(0) y^2+O(3)$, respectively, where $O(3)$ are terms of degree at least 3 in $y$. 
	
	As the $LPL$, the $PC$ and the $MCNC$ are graphs of functions with 1-jet null, the position of these curves depends on the coefficient of $y^2$. First, we have that
	$$\left| \frac{x_K''(0)}{2} \right| > \sqrt{\frac{32 \Lambda_{13}^3}{27 a_{30}^3}} \quad \Leftrightarrow \quad 3 a_{30} a_{32}-a_{31}^2 \neq 0,$$
	and generically the $PC$ is never between the two branches of the $LPL$. 
	
	Furthemore, note that
	$$\left| \frac{x_H''(0)}{2} \right| > \sqrt{\frac{32 \Lambda_{13}^3}{27 a_{30}^3}} \quad \Leftrightarrow \quad \frac{3 a_{30} a_{32}-a_{31}^2}{a_{30}} < 0.$$
	Thus, the position between the $LPL$ and the $MCNC$ depends on the sign of $\frac{3 a_{30} a_{32}-a_{31}^2}{a_{30}}$. The $MCNC$ is between the branches of the $LPL$ if and only if $\frac{3 a_{30} a_{32}-a_{31}^2}{a_{30}} > 0$, the result follows in this case.
	
%	\begin{figure}[h!]
%		\centering
%		\begin{subfigure}[b]{0.2\textwidth}
%			\includegraphics[width=\textwidth]{lpl_posicao1.pdf}
%			\caption{$\frac{3 a_{30} a_{32}-a_{31}^2}{a_{30}} < 0$}
%		\end{subfigure}
%		\qquad
%		\begin{subfigure}[b]{0.2\textwidth}
%			\includegraphics[width=\textwidth]{lpl_posicao2.pdf}
%			\caption{$\frac{3 a_{30} a_{32}-a_{31}^2}{a_{30}} > 0$}
%		\end{subfigure}
%		\caption{The continuous curve represents the $LPL$ while the dashed curve is the $MCNC$.}
%		\label{fig:regioeslpl}
%	\end{figure}
	
	Suppose $\frac{3 a_{30} a_{32}-a_{31}^2}{a_{30}} < 0$. Since $x_K''(0)$ and $x_H''(0)$ have the same sign, the $PC$ and the $MCNC$ are in the same region bounded by the $LPL$. The $MCNC$ is contained between the $PC$ and the $LPL$ when $\vert x_K''(0) \vert > \vert x_H''(0) \vert$, being that
	\begin{equation}\label{eq:sgn}
	\left|x_K''(0)\right| > \left|x_H''(0)\right| \quad \Leftrightarrow \quad (3 a_{30} a_{32}-a_{31}^2) (18 a_{22}^2 a_{30}+\Lambda_{13}) < 0.
	\end{equation}
	Since $\Lambda_{13}$ and $a_{30}$ have the same sign, because $LPL$ has a singularity $A_3^-$, so does $18 a_{22}^2 a_{30}+\Lambda_{13}$. However, $a_{30}$ and $3 a_{30} a_{32}-a_{31}^2$ have opposite signs, so the inequality (\ref{eq:sgn}) is always true.
	
	For a generic surface $(M,p) \in \mathcal{G}_8$, all the conditions above are satisfied. Therefore, the configurations of the curves as in Figure \ref{fig:umb_tipo_luz_desd} middle figures.\vspace{0.2cm}

	\noindent \textbf{(2)} Let $M_t \subset \R^3_1$ be a 1-parameter family of surfaces parameterized by $\bx_t : U \to M_t$, with $\bx_t(x,y) = (x,y,h(x,y,t))$ for some $h$ differentiable with $h(x,y,0) = f(x,y)$. The $LD_t$ is formed by the points $\bx_t(x,y)$ where $\delta_t(x,y) = 0$. We have
	$j^2\delta_t \sim_{\mathcal{R}^{(2)}} \rho(t) + a_{30} x^2 + \frac{\Lambda_{13}}{a_{30}} y^2$
	and $j^1\rho = -2 h_{xt} (0,0,0) t$. Therefore, the $LD_t$ is a $\mathcal{R}-$versal deformation of the $LD$ of $M$ if and only if $h_{xt}(0,0,0) \neq 0$.

	The deformation of the $LPL$ induced by $M_t$ is equivalent to
	\begin{equation}\label{eq:defA3}
	x^2 \pm (y^4 + w_1(t) y^2+w_2(t) y + w_3(t)).
	\end{equation}
	Define the differentiable curve $\gamma : (\R,0) \to \R^3$, given by $\gamma(t) = (w_1(t),w_2(t),w_3(t))$, with $\gamma(0)$ being the origin.
	
	We will consider only the linear part of $\gamma$ to calculate tangent vector to $\gamma$. Therefore, take $h(x,y,t) = f(x,y) +t\tilde{h}(x,y)$ for some smooth function $\tilde{h}$. The $LPL$ of the surfaces $M_t$ is given by $\tilde \delta_t = 0$, with
	\begin{equation}\label{eq:deflpl}
	j^4\tilde \delta_t \sim_{\mathcal{R}} x^2 \pm y^4 - h_{xt}(0,0,0) \sqrt{\frac{32 |\Lambda_{13}|}{3 |a_{30}|}} t y^2.
	\end{equation}
	Thus, the tangent vector to the $\gamma$ at $t=0$ is
	$\left(\pm h_{xt}(0,0,0) \sqrt{\frac{32 |\Lambda_{13}|}{3 |a_{30}|}},0,0\right)$.
	
	Since the $LD$ has a singularity $A_1$, it follows that $p$ is a stable umbilic point \cite{MarcoTari}. Therefore, $\tilde \delta_t$ has a singularity for every $t$, because umbilic points are singularities of the $LPL$, and the curve $\gamma$ is contained in the swallowtail.
	
	As $\gamma$ is regular, it follows from the Theorem \ref{teo:curvregswa} that one part of $\gamma$ lies in the stratum $II$ and the other part in the stratum $VI$, see Figure \ref{fig:desd_A3} left figure. Therefore, the $A_3^\pm$-singularity of the $LPL$ deforms as in Figure \ref{fig:lpl+-}.
	\begin{figure}[h!]
		\centering
		\begin{subfigure}[b]{0.4\textwidth}
			\includegraphics[width=\textwidth]{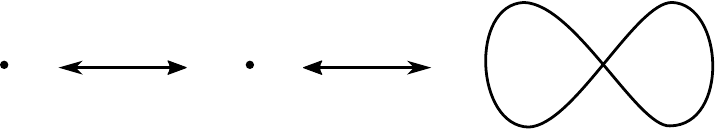}
			\caption{$A_3^+$ Singularity.}
		\end{subfigure}
		\qquad \qquad
		\begin{subfigure}[b]{0.4\textwidth}
			\includegraphics[width=\textwidth]{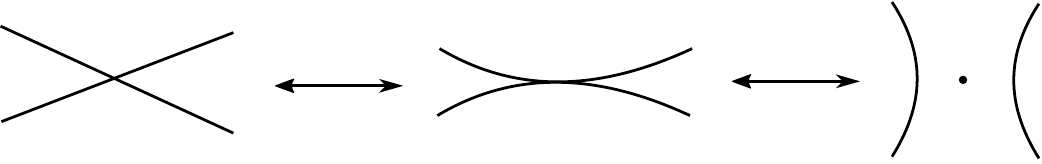}
			\caption{$A_3^-$ Singularity.}
		\end{subfigure}
		\caption{Deformation of the $LPL$ in a generic family of surfaces at a lightlike umbilic point.}
		\label{fig:lpl+-}
	\end{figure}

	Finally, it remains to obtain the intersections between these curves along the $M_t$ family. Let $\theta_{\bx_t}: U \to J^3(2,1)$ be the Monge-Taylor map with respect to $\bx_t$. Put
	$$\theta_{\bx_t} (x,y) = \left( \theta_{00}(x,y,t),\theta_{10}(x,y,t),\theta_{11}(x,y,t),\dots,\theta_{32}(x,y,t),\theta_{33}(x,y,t) \right),$$
	where $\theta_{ij} : U \times (\R,0) \to \R$ is the coefficient of $x^{i-j} y^j$. A point $\bx_t(x,y)$ belongs to the intersection of the $LD$ with the $MCNC$ if and only if $\sigma_t(x,y) = (0,0)$, where $$\sigma_t(x,y) = (\theta_{10}^2+\theta_{11}^2-1,\theta_{10}^2 \theta_{20}+\theta_{10} \theta_{11} \theta_{21}+\theta_{11}^2 \theta_{22})$$
	with $\theta_{ij}$ evaluated in $(x,y,t)$. Note that
	$$\sigma_t \sim_{\mathcal{A}} \left(\frac{\Lambda_{13}}{a_{30}} y^2+2 h_{xt}(0,0,0) t+\rho(t),x\right),$$
	with $j^1\rho = 0$. Therefore, the $LD_t$ and the $MCNC_t$ intersect at 2 or 0 points when $\Lambda_{14} < 0$ or $\Lambda_{14}>0$, respectively, with $\Lambda_{14}=a_{30} h_{xt}(0,0,0) \Lambda_{13} t$.
	The $LD_t$ and the $MCNC_t$ are tangent at $\bx_t(x,y)$ when $T(x,y,t) = 0$, where
	$$\begin{array}{ccl}
	T & = & 2 \theta_{10} \theta_{11}^2 \theta_{20} \theta_{31}-6 \theta_{10} \theta_{11}^2 \theta_{20} \theta_{33} -3 \theta_{10} \theta_{11}^2 \theta_{21} \theta_{30}-\theta_{10} \theta_{11}^2 \theta_{21} \theta_{32}+6 \theta_{11} \theta_{22} \theta_{30}\\ & & -4 \theta_{20} \theta_{21} \theta_{22}+8 \theta_{10} \theta_{11} \theta_{20}^2 \theta_{22} -2 \theta_{10} \theta_{11} \theta_{20} \theta_{21}^2-8 \theta_{10} \theta_{11} \theta_{20} \theta_{22}^2-2 \theta_{10} \theta_{20} \theta_{31}\\ & & +2 \theta_{10} \theta_{11} \theta_{21}^2 \theta_{22}+4 \theta_{11}^3 \theta_{20} \theta_{32}-2 \theta_{11}^2 \theta_{21}^3 -\theta_{11}^3 \theta_{21} \theta_{31}-3 \theta_{11}^3 \theta_{21} \theta_{33}-6 \theta_{11}^3 \theta_{22} \theta_{30} \\ & & +2 \theta_{11}^3 \theta_{22} \theta_{32}+8 \theta_{11}^2 \theta_{20} \theta_{21} \theta_{22}+\theta_{21}^3-4 \theta_{11} \theta_{20} \theta_{32}+\theta_{11} \theta_{21} \theta_{31}+4 \theta_{10} \theta_{11}^2 \theta_{22} \theta_{31} \\ & & +3 \theta_{10} \theta_{21} \theta_{30}.
	\end{array}$$
	If $\eta_t(x,y) = (\sigma_t(x,y),T(x,y,t))$, then $\eta_t \sim_{\mathcal{A}} (2 h_{xt}(0,0,0) t+\rho(t),x,y)$, with $j^1\rho = 0$. Therefore, the $LD_t$ and the $MCNC_t$ are transversal, because $\eta_t(x,y) \neq (0,0,0)$ for $t \neq 0$. 
	
	Similarly, we obtain the intersections between the $LD$ and the $PC$ and between the $MCNC$ and the $PC$. For the intersections between the $LD$ and the $LPL$, use the Theorem \ref{teo:ld_lpl_H}. As $LD_t$ is regular for $t \neq 0$, the intersections between the $LPL_t$ and the $PC$ occur outside the $LD_t$ and the result follows from the Theorem \ref{prop:inter_tripla}.
	
	Given a point $q$ in $LPL_t$, it follows from the Theorems \ref{prop:inter_tripla} and \ref{teo:ld_lpl_H} that $q$ belongs to the $MCNC_t$ if and only if $q$ belongs to the $PC$ or the $LD_t$. Hence, the number of intersection points is $0$, $2$ or $4$. As the $LPL_t$ is tangent to the $LD_t$ and the $PC$, but the $MCNC_t$ is transversal to the $LD_t$ and the $PC$, it follows by transitivity that the $MCNC_t$ is transversal to the $LPL_t$ at the intersection points.

	The result follows from the combination between the deformations of the curves alone and the possible intersections.
\end{proof}

\section{Appendix}\label{sec:appendix}

We consider some regular curves on the discriminant of an $A_3$-singularity. The discriminant of the $\mathcal{R}-$versal unfolding $F(x,y,u,v,w) = x^2 \pm (y^4+u y^2+vy+w)$ of the $A_3^\pm$-singularity $f(x,y) = x^2 \pm y^4$ is given by
$$D_F = \left\{ (u,v,w) \in (\R^3,0) : \exists y \in (\R,0) : y^4+u y^2+v y+w=0, \, 4 y^3+2uy+v=0 \right\}.$$
It is a swallowtail and is parameterized by % $\varphi : U \subset \R^2 \to \R^3$, with
$\varphi(u,y) = (u,-4 y^3 -2uy, 3 y^4+uy^2)$.

Figure \ref{fig:desd_A3} left presents a Whitney stratification of the swallowtail. %There are three open strata:
%\begin{description}
%	\item[(VIII)] The open set bounded by the strata $(III)$, $(IV)$, $(V)$, $(VI)$ and $(VII)$ where $F$ has $4$ real roots,
%	\item[(IX)] The open set bounded by the strata $(II)$ and $(IV)$ where $F$ has no real roots.
%	\item[(X)] The open set limited by the strata $(II)$, $(III)$, $(IV)$, $(V)$, $(VI)$ and $(VII)$ where $F $ has $2$ real roots.
%\end{description}
The configuration of the curve $x^2 \pm (y^4+u y^2+vy+w) = 0$ depends only on the stratum in which the parameters $(u,v,w)$ are taken. Figure \ref{fig:desd_A3} presents these configurations.
\begin{figure}[h!]
	\centering
	\begin{subfigure}[b]{0.3\textwidth}
		\includegraphics[width=\textwidth]{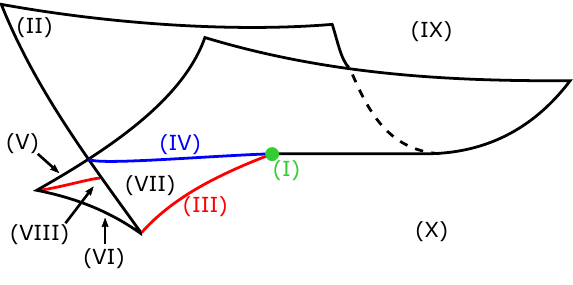}
	\end{subfigure}
	\qquad
	\begin{subfigure}[b]{0.25\textwidth}
		\includegraphics[width=\textwidth]{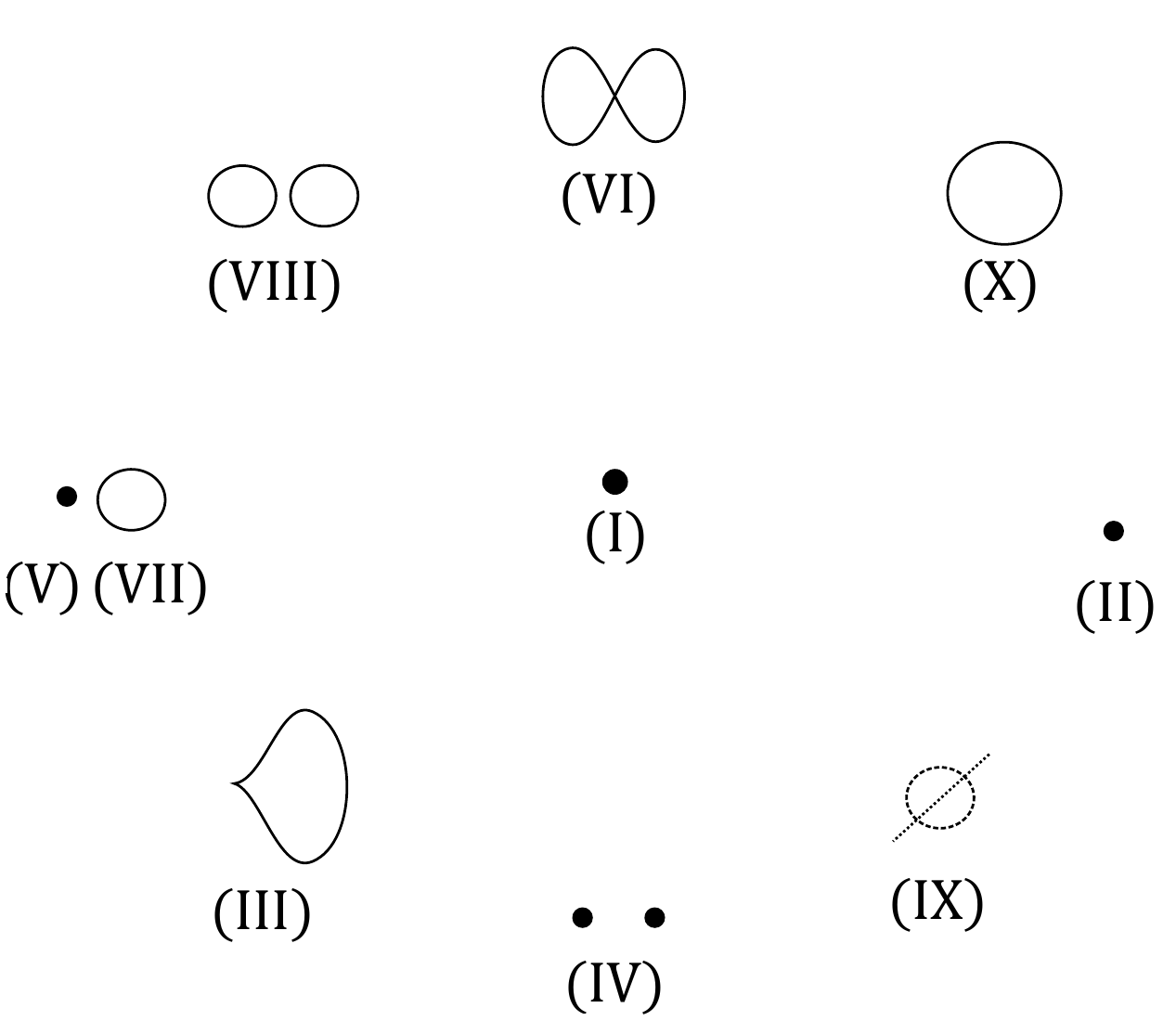}
	\end{subfigure}
	\qquad
	\begin{subfigure}[b]{0.25\textwidth}
		\includegraphics[width=\textwidth]{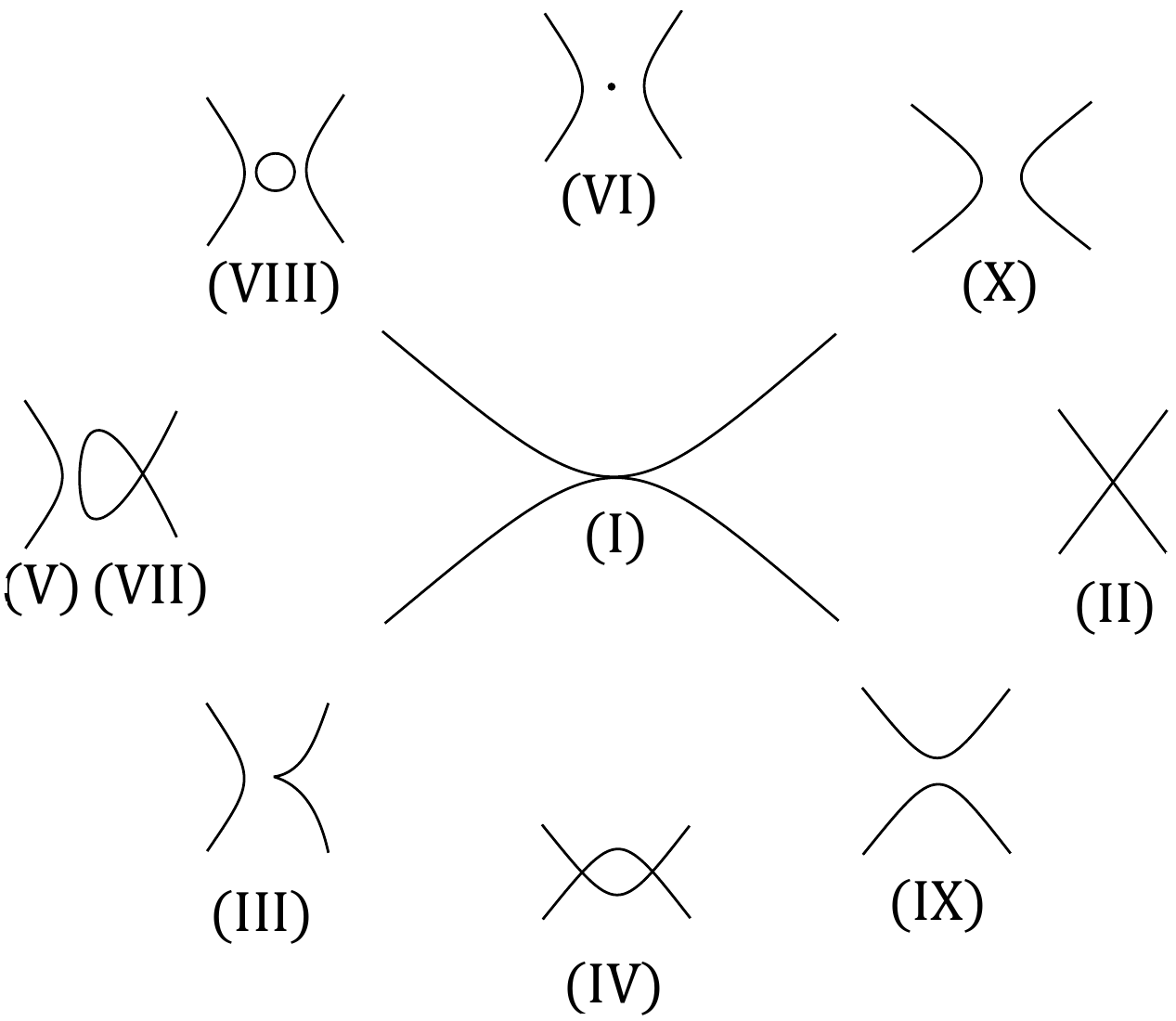}
	\end{subfigure}
	\caption{Whitney stratification of the swallowtail and deformations of $A_3$ singularity ($A_3^+$ middle, $A_3^-$ right) according to the stratum where the parameters (u,v,w) are taken.}
	\label{fig:desd_A3}
\end{figure}

Let $\eta : (-\epsilon,\epsilon) \to \R^3$ be a smooth and regular curve on the swallowtail with $\eta(0)=(0,0,0)$. Then $\eta'(0) = (1,0,0)$ and we can write $\eta (t) = (t,v(t),w(t))$, where $v(t) = -4y(t)^3-2ty(t)$ and $w(t) = 3y(t)^4+ty(t)^2$, for some function $y : (- \epsilon,\epsilon) \to \R$.

\begin{lem}\label{teo:y}
	The function $y(t)$ is smooth at $t=0$.
\end{lem}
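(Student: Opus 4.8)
The plan is to exploit that $y(t)$ is, by construction, the location of a double root of the quartic $P_t(Y)=Y^4+tY^2+v(t)Y+w(t)$, i.e. a common root of $P_t$ and $P_t'$, and to turn this into an explicit rational expression for $y$. First I would run the Euclidean algorithm on the pair $(P_t,P_t')$ (equivalently, simply verify the polynomial identity obtained by substituting the defining relations $v=-4y^3-2ty$ and $w=3y^4+ty^2$) to obtain
$$ y\,\bigl(2t^3-8wt+9v^2\bigr)+v\,\bigl(t^2+12w\bigr)=0. $$

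Since $\eta$ is a smooth curve through the origin with $\eta'(0)=(1,0,0)$, one has $v(0)=w(0)=0$ and $v'(0)=w'(0)=0$; by Hadamard's lemma I would write $v=t^2a(t)$ and $w=t^2b(t)$ with $a,b$ smooth. Substituting gives $2t^3-8wt+9v^2=t^3\bigl(2-8b+9t a^2\bigr)$ and $v(t^2+12w)=t^4 a(1+12b)$, whence for $t\neq0$
$$ y=-\,\frac{t\,a\,(1+12b)}{\,2-8b+9t a^2\,}. $$
This is a quotient of smooth functions, so it extends smoothly across $t=0$ as soon as its denominator is nonzero there, i.e. as soon as $b(0)\neq\frac14$.

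Thus the whole problem reduces to controlling the value $b(0)=\frac12 w''(0)$, and I expect this to be the main obstacle: the naive idea of inverting the parametrisation $\varphi$ fails precisely because the origin is a singular point of $\varphi$ (one has $\partial_y\varphi(0,0)=0$), so the inverse function theorem is unavailable at the tail. To get around this I would use the sign of $y^2$. From the defining relations one has the simpler identity $3yv+4w=-2ty^2$, which after dividing by $t$ becomes $2y^2=-t\bigl(3ay+4b\bigr)$. This relation already forces $y\to0$ as $t\to0$ (the right-hand side tends to $0$ while $y$ stays bounded), so $3ay+4b\to4b(0)$ and $2y^2=-t\,\bigl(4b(0)+o(1)\bigr)$. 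Because $y^2\ge0$ for all small $t$ of both signs, the coefficient of the linear term must vanish, which forces $b(0)=0$.

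With $b(0)=0$ the denominator $2-8b+9t a^2$ tends to $2\neq0$, so $y=-t\,a(1+12b)/(2-8b+9t a^2)$ is genuinely a ratio of smooth functions with nonvanishing denominator near $t=0$, and is therefore smooth; in particular $y'(0)=-a(0)/2$. I would close by remarking that this also confirms $y=O(t)$ a posteriori, consistently with $w=3y^4+ty^2=O(t^3)$, which in turn guarantees that the rational identity used above is valid on a punctured neighbourhood of $0$.
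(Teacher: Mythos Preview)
Your proof is correct and follows the same overall strategy as the paper: write $v=t^2 a$, $w=t^2 b$ with $a,b$ smooth, use the positivity of $y^2$ for $t$ of both signs to force $b(0)=0$ (the paper's $\tilde w(0)=0$), and then express $y$ as a ratio of smooth functions with nonvanishing denominator. The specific identities differ, however. The paper computes the full resultant of $P_t$ and $P_t'$ to see that $\tilde w(0)\in\{0,\tfrac14\}$, solves the quadratic $3Y^2+tY=t^2\tilde w$ for $Y=y^2$ via the quadratic formula (introducing a square root $\sqrt{1+12\tilde w}$), uses $Y\ge0$ to select the branch and discard the value $\tfrac14$, and finally recovers $y$ from $v=-2y(2Y+t)$. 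You instead use the subresultant-type identity $y(2t^3-8wt+9v^2)+v(t^2+12w)=0$ to get $y$ directly as a rational expression, and the separate identity $3yv+4w=-2ty^2$ to handle the sign constraint. Your route avoids both the resultant computation and the quadratic formula, which makes it a bit cleaner; the paper's route is more systematic but has to manage a square root along the way.

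One very minor remark: the sentence ``the right-hand side tends to $0$ while $y$ stays bounded'' is slightly circular as written, since the right-hand side $-t(3ay+4b)$ itself contains $y$. Boundedness of $y$ (and hence $y\to0$) follows more directly from $w=3y^4+ty^2\to0$; once that is in hand, your argument goes through as stated.
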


\begin{proof}
	Since $\eta$ is smooth, so are $v$ and $w$. Also, $v(t) = t^2 \tilde v(t)$ and $w(t) = t^2 \tilde w(t)$, with $\tilde v$ and $\tilde w$ smooth, because $v(0) = w(0) = v'(0) = w'(0)=0$. Let $P_t(y) = y^4+ty^2+v(t)y+w(t)$ and $Q_t(y) = 4y^3+2ty+v(t)$. Since $\eta$ is a curve on the swallowtail, the resultant 
	$$r(t) = 256 w(t)^3-128 t^2 w(t)^2+16 t^4 w(t)+144t v(t)^2 w(t)-27 v(t)^4-4 t^3 v(t)^2$$
	of the polynomials $P_t$ and $Q_t$ with respect to $y$ is identically zero. It follows that $j^6r = 16 \tilde w(0) (4 \tilde w(0) -1)^2 t^6 \equiv 0$, so $\tilde w(0) = 0$ or $\tilde w(0) = 1/4$. 
	
	Let $Y(t)=y(t)^2$, then $t^2 \tilde w(t)=3Y(t)^2+tY(t)$ and
	$Y(t) = -t \left( \frac{1 \pm \sqrt{1+12 \tilde{w}(t)}}{6} \right)$.
	Since $Y(t)\geq 0$ for all $t \in (-\epsilon,\epsilon)$, it follows that $\tilde w(0)=0$ and $Y(t)=-t \tilde Y(t)$, with
	$\tilde Y(t) = \frac{1 - \sqrt{1+12 \tilde{w}(t)}}{6}$.
	%Thus, $1+12 \tilde{w}(t) > 0$ for $t$ close enough to $0$ and $Y$ is smooth at $t = 0$.
	
	On the other hand, $t^2 \tilde v(t) = -4Y(t)y(t)-2ty(t)$. Therefore,
	$y(t)  = \frac{t \tilde v(t)}{4\tilde Y (t)-2}$.
	Since $4\tilde Y(0)-2 = -2 \neq 0$, it follows that $y$ is smooth.
\end{proof}

\begin{theo}\label{teo:curvregswa}
	If $\eta$ is a smooth and regular curve on the swallowtail with $\eta(0) = (0,0,0)$, then one part of $\eta$ lies in the stratum $(II)$ and the other part in the stratum $(VI) $.
\end{theo}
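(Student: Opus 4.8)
The plan is to reduce the statement, via Lemma \ref{teo:y}, to an elementary comparison of a linear term against a quadratic one. Since $\eta$ is regular and lies on the swallowtail, whose parametrisation $\varphi(u,y)=(u,-4y^3-2uy,3y^4+uy^2)$ satisfies $\partial_y\varphi(0,0)=(0,0,0)$ and $\partial_u\varphi(0,0)=(1,0,0)$, the only admissible tangent direction at the swallowtail point is the $u$-axis; as already observed I may thus reparametrise so that $\eta'(0)=(1,0,0)$ and write $\eta(t)=\varphi(t,y(t))=(t,v(t),w(t))$. By Lemma \ref{teo:y} the function $y(t)$ is smooth with $y(0)=0$, so $y(t)=O(t)$ and hence $y(t)^2=O(t^2)=o(t)$. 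This last estimate, the linear coordinate $u=t$ dominating the quadratic $y(t)^2$, is the engine of the whole argument.

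First I would record how the stratum of $\eta(t)$ is read off from the pair $(t,y(t))$. On the discriminant the quartic factorises as
$$Y^4+tY^2+v(t)Y+w(t)=(Y-y(t))^2\bigl(Y^2+2y(t)Y+t+3y(t)^2\bigr),$$
so, besides the double root $y(t)$, the simple roots are $-y(t)\pm\sqrt{-t-2y(t)^2}$. In the $(u,y)$-plane the fold of $\varphi$ (the cuspidal edge) is the parabola $u=-6y^2$ and the preimage of the self-intersection is $u=-2y^2$; these two parabolas separate the sheets of the swallowtail, and the stratum containing $\eta(t)$ is governed by the position of $(t,y(t))$ relative to them, equivalently by the reality and the configuration of the simple roots.

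Then the estimate $y(t)^2=o(t)$ finishes everything. For $t>0$ one has $t+2y(t)^2>0$, hence $-t-2y(t)^2<0$: the simple roots form a complex-conjugate pair, so $\eta(t)$ lies on the sheet carrying a double root together with a complex pair, which is the stratum $(II)$ of Figure \ref{fig:desd_A3}. For small $t<0$, the bound $y(t)^2=o(t)$ gives $t+6y(t)^2<0$, i.e. $t<-6y(t)^2$; thus the simple roots are real and $\sqrt{-t-2y(t)^2}>2|y(t)|$, so the double root $y(t)$ lies strictly between them, placing $\eta(t)$ in the stratum $(VI)$. The same domination shows $t\neq-6y(t)^2$ and $t\neq-2y(t)^2$ for all small $t\neq0$, so $\eta$ meets neither the cuspidal edge nor the self-intersection away from the origin, and its two half-branches sit in $(II)$ and $(VI)$ respectively. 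The one genuinely nontrivial ingredient, the smoothness of $y(t)$, is already supplied by Lemma \ref{teo:y}; everything afterwards is the sign bookkeeping above, so I expect the only real care needed is in matching the two sheets to the labels of Figure \ref{fig:desd_A3}.
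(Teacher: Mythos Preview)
Your proof is correct and follows essentially the same route as the paper: pull $\eta$ back through $\varphi$ to the smooth curve $(t,y(t))$ in the $(u,y)$-plane via Lemma \ref{teo:y}, then observe that this curve is transverse at the origin to the parabolas $u=-6y^2$ and $u=-2y^2$ (the preimages of the cuspidal edge and self-intersection), so the two half-branches land in the regions corresponding to strata $(II)$ and $(VI)$. The paper states the transversality and appeals to Figure \ref{fig:swallowtail_plano}, whereas you make the identification of strata explicit by factorising the quartic and tracking the root configuration; this is a welcome addition but not a different argument.
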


\begin{proof}
	In Figure \ref{fig:desd_A3} left figure, the strata $(III)$ and $(IV)$ are curves parameterized by $\gamma_{1}(t) = (-6t^2,8t^3,-3t^4 )$ and $\gamma_{2}(t) = (-2t^2,0,t^4)$, respectively. The parabolas $(-6t^2,t)$ and $(-2t^2,t)$ in the $uy$ plane are the pre-images of $\gamma_1$ and $\gamma_2$ by the parameterization $\varphi$ of the swallowtail, see Figure \ref{fig:swallowtail_plano}.
	
	\begin{figure}[h!]
		\centering
		\includegraphics[width=0.25\textwidth]{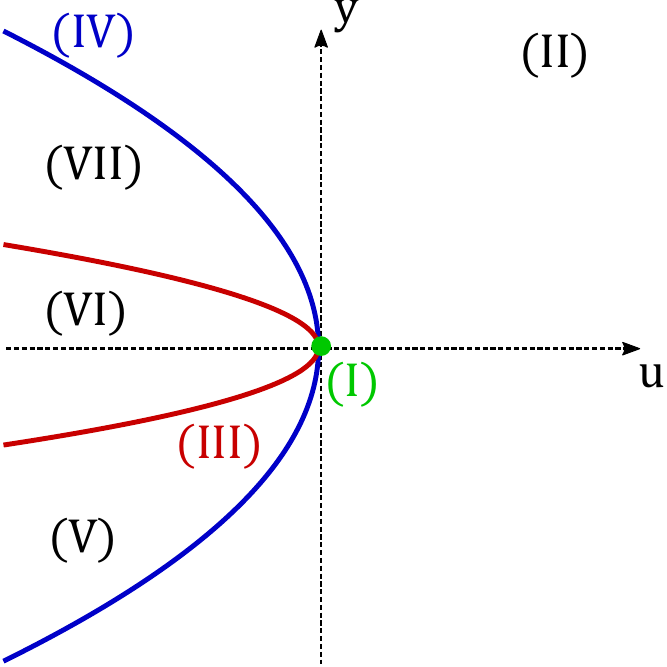}
		\caption{Partition of the $uy$ plane induced by the swallowtail stratification.}
		\label{fig:swallowtail_plano}
	\end{figure}
	
	Since $\eta(t) = (t,v(t),w(t))$ and from the Lemma \ref{teo:y}, it follows that the pre-image of $\eta$ by $\varphi$ is the smooth curve $(t,y(t))$ in the $uy$ plan. Therefore, $\varphi^{-1}(\eta)$ is transversal to the parabolas in Figure \ref{fig:swallowtail_plano}, that is, $\eta$ passes from the stratum $(II)$ to the stratum $(VI) $ when $t$ changes sign.
\end{proof}

\begin{acknow}
This work was financed by the doctoral grant Coordenação de Aperfeiçoamento de Pessoal de Nível Superior – Brasil (CAPES) – Finance Code 001, supervised by Farid Tari.
\end{acknow}

%%%%%%%%%%%%%%%%

\noindent
MACF: Instituto de Ci\^encias Matem\'aticas e de Computa\c{c}\~ao - USP, Avenida Trabalhador s\~ao-carlense, 400 - Centro, CEP: 13566-590 - S\~ao Carlos - SP, Brazil.\\
E-mail: marcoant\_couto@hotmail.com


\begin{thebibliography}{99}

\bibitem{arnold}
V. I. Arnold, S. M.  Gusein-Zade and A. N. Varchenko,
{\it Singularities of differentiable maps}. Vol. I. Monographs in Mathematics 82, 1985.	

\bibitem{bruce84} 
J. W. Bruce, Projections and reflections of generic surfaces in $\R^3$. {\it Math. Scand.} 54 (1984), 262--278.
	
\bibitem{BruceGiblinTari0}
J. W. Bruce, P. J. Giblin and F. Tari, Families of surfaces: height functions, Gauss maps and duals. \textit{Real and complex singularities} Pitman Research Notes in Mathematics 333 (1995), 148–-178.

\bibitem{BruceGiblinTari} 
J. W. Bruce, P. J. Giblin and F. Tari, Parabolic curves of evolving surfaces. \textit{Int. J. Comput. Vis.} 17 (1996), 291--306.

\bibitem{BruceGiblinTari2} 
J. W. Bruce, P. J. Giblin and F. Tari. Families of surfaces: height functions and projections to planes. {\it Math. Scand.} (1998), 165--185.

\bibitem{BruceGiblinTari3}
J. W. Bruce, P. J. Giblin, and F. Tari. Families of surfaces: focal sets, ridges and umbilics. \textit{Math. Proc. Cambridge Philos. Soc.} 125 (1999), 243--268.

\bibitem{Couto_Lymb}
I. T. Couto and A. Lymberopoulos, {\it Introduction to Lorentz Geometry: Curves and Surfaces}. Chapman and Hall/CRC, 2021.

\bibitem{jorge}
J. L. Deolindo-Silva, Y. Kabata, and T. Ohmoto, Binary differential equations at parabolic and umbilical points for 2-parameter families of surfaces. \textit{Topol. Appl.} 234 (2018), 457--473.

%\bibitem{Manfredo}
%M. P. Do Carmo, \textit{Differential geometry of curves and surfaces: revised and updated second edition}. Courier Dover Publications, 2016.

%\bibitem{MarcoThesis} M. A. C. Fernandes, 
%{\it Umbilic points and special curves on surfaces in Minkowski space} 
%[in Portuguese]. PhD thesis, University of S\~ao Paulo, 2021. %https://doi.org/10.11606/T.55.2021.tde-21072021-164923

\bibitem{MarcoTari} M. A. C. Fernandes and F. Tari, On the multiplicity of umbilic points. To appear in {\it Pacific J. Math.}

\bibitem{GarciaSoto}
R. Garcia and J. Sotomayor, Codimension two umbilic points on surfaces immersed in $\R^3$. {\it Discrete Contin. Dyn. Syst.} 17 (2007), 293--308.

\bibitem{GarciaSotoGut}
C. Gutierrez, J. Sotomayor and R. Garcia, Bifurcations of umbilic points and related principal cycles. {\it J. Dyn. Differ. Equ.} 16 (2004), 321--346.

\bibitem{GeoGen}
S. Izumiya, M. D. C. R. Fuster, M. A. S. Ruas, F. Tari, \textit{Differential geometry from a singularity theory viewpoint}. World Scientific Publishing Co. Pte. Ltd., 2016.

\bibitem{Izu-Machan-Farid} 
S. Izumiya, M. Takahashi and F. Tari, Folding maps on spacelike and timelike surfaces and duality.
{\it Osaka J. Math.} 47 (2010), 839--862.

\bibitem{IzumiyaTari}
S. Izumiya and F. Tari, Self-adjoint operators on surfaces with singular metrics. {\it J. Dyn. Control Syst.} 16 (2010), 329--353. 

%\bibitem{Minkowski}
%H. Minkowski, Raum und Zeit. \textit{Physikalische Zeitschrift} 10 (1908), 75--88.

\bibitem{oNeill} 
B. O'Neill, {\it Semi-Riemannian geometry with applications to relativity}. Academic Press, 1983.

\bibitem{pei} 
D. Pei, Singularities of $\mathbb R P\sp 2$-valued Gauss maps of surfaces in Minkowski 3-space. {\it Hokkaido Math. J.} 28 (1999), 97--115.

\bibitem{sotogut} 
J. Sotomayor and C. Gutierrez, Structurally stable configurations of lines of principal curvature.
Bifurcation, ergodic theory and applications (Dijon, 1981), 195--215,  {\it Astérisque} , 98--99, Soc. Math. France, Paris, 1982.

%\bibitem{pairs} 
%F. Tari,  Pairs of foliations on surfaces. {\it Real and Complex Singularities} (Eds M. Manoel, M. C. Romero Fuster and C. T. C. Wall) London Mathematical Society Lecture Notes Series 380 (2010), 305--337.

\bibitem{CaratheodoryR31} 
F. Tari,  Umbilics of surfaces in the Minkowski 3-space. {\it J. Math. Soc. Japan}, 65 (2013), 723--731.

\bibitem{wall} C. T. C. Wall, Finite determinacy of smooth map-germs. {\it Bull. London Math. Soc.} 13 (1981), 481--539.

\end{thebibliography}
\end{document}